\newtheoremstyle{my}{1.5em}{0.5em}{\em}{}{\sc}{.}{0.5em}{}
\theoremstyle{my}
\newtheorem{thm}{Theorem}
\newtheorem{Theorem}[thm]{Theorem}
\newtheorem*{Theorem*}{Theorem}
\newtheorem{Corollary}[thm]{Corollary}
\newtheorem{corollary}[thm]{Corollary}
\newtheorem*{corollary*}{Corollary}
\newtheorem{Lemma}[thm]{Lemma}
\newtheorem{lemma}[thm]{Lemma}
\newtheorem{conjecture}[thm]{Conjecture}
\newtheorem{definition}[thm]{Definition}
\newtheorem{remark}[thm]{Remark}
\newtheorem{example}[thm]{Example}
\newtheorem*{examples*}{Examples}
\newcommand{\Acknowledgments}{{\em Acknowledgments.} }
\newcommand{\ignore}[1]{...}
\newcommand{\R}{\mathbb{R}}
\newcommand{\Z}{\mathbb{Z}}
\newcommand{\Q}{\mathbb{Q}}
\newcommand{\C}{\mathbb{C}}
\newcommand{\iso}{\cong}           %isomorphism sign
\newcommand{\htp}{\simeq}          %homotopy sign
\newcommand{\smooth}{C^\infty}
\newcommand{\RP}[1]{\R {\mathrm P}^{#1}}
\newcommand{\anti}{\mathrm{anti}}
\newcommand{\inv}{\mathrm{inv}}
\newcommand{\Sym}{\mathrm{Sym}}
\newcommand{\Jac}{\mathrm{Jac}}
\newcommand{\id}{\mathrm{id}}
\newcommand{\cok}{\mathrm{coker}}
\newcommand{\Conf}{\mathrm{Conf}}
\newcommand{\Kh}{\mathit{Kh}}
\newcommand{\Hilb}{\mathrm{Hilb}}
\newcommand{\Slice}{\mathcal{S}}
\renewcommand{\o}{\omega}
\renewcommand{\SS}{\mathcal{S}}
\newcommand{\Y}{\EuScript{Y}}
\newcommand{\scrH}{\EuScript{H}}
\newcommand{\scrP}{\EuScript{P}}
\newcommand{\scrM}{\EuScript{M}}
\newcommand{\scrS}{\EuScript{S}}
\newcommand{\CC}{\EuScript{C}}
\newcommand{\DD}{\mathbf{D}}
\newcommand{\XX}{\mathbf{X}}
\newcommand{\diag}{\mathrm{diag}}
\renewcommand{\SS}{\mathbf{S}}
\newcommand{\TT}{\EuScript{T}}
\newcommand{\HH}{\EuScript{H}}
\newcommand{\X}{\EuScript{X}}
\begin{document}
\title[Localization for involutions]{Localization for involutions \\ in Floer cohomology}
\author{Paul Seidel, \, Ivan Smith}
\date{v3; August 2010}
\begin{abstract}
We consider Lagrangian Floer cohomology for a pair of Lagrangian submanifolds in a symplectic manifold $M$. Suppose that $M$ carries a symplectic involution, which preserves both submanifolds. Under various topological hypotheses, we prove a localization theorem for Floer cohomology, which implies a Smith-type inequality for the Floer cohomology groups in $M$ and its fixed point set. Two applications to symplectic Khovanov cohomology are included.
\end{abstract}
\maketitle

\section{Introduction}
Suppose that we have a compact smooth manifold $M$ with a smooth action of the group $G = \Z/2$ on it, whose fixed point set we denote by $M^{\mathrm{inv}}$. The classical Smith inequality \cite{Smith-inequality, Bredon} asserts that
\begin{equation} \label{eq:old-smith}
\mathrm{dim}\, H^*(M;\Z_2) \geq \mathrm{dim}\, H^*(M^{\mathrm{inv}};\Z_2).
\end{equation}
This is an inequality of total dimensions, and does not hold separately in each degree (similar inequalities hold for actions of $G = \Z/p$ with prime $p$; such generalizations will not be considered in this paper). There are several ways of proving \eqref{eq:old-smith}. One of them involves equivariant cohomology $H^*_G(M;\Z_2)$ and the localization theorem, which says that the restriction map
\begin{equation} \label{eq:restriction-map}
H^*_G(M;\Z_2) \longrightarrow H^*(M^{\mathrm{inv}};\Z_2) \otimes_{\Z_2} H^*(BG;\Z_2)
\end{equation}
becomes an isomorphism once one inverts the generator $q$ of $H^*(BG;\Z_2) = \Z_2[[q]]$. The purpose of this paper is develop an analogue of this for Lagrangian Floer cohomology, with a view to applications to the link invariant introduced in \cite{SS}.

The overall setup is as follows (see Section \ref{subsec:borel-floer} for more details). We consider a symplectic manifold $M$, which is assumed to be exact and convex at infinity, and carry a symplectic involution $\iota$. $L_0,L_1 \subset M$ are exact Lagrangian submanifolds invariant under $\iota$, which are supposed to be either compact or else reasonably well-behaved at infinity. We denote the fixed point set by $M^{\mathrm{inv}}$. The fixed parts $L_k^{\mathrm{inv}} = L_k \cap M^{\mathrm{inv}}$ are again automatically Lagrangian submanifolds. There are three kinds of Floer cohomology groups involved: the ordinary Floer cohomology $HF(L_0^{\mathrm{inv}},L_1^{\mathrm{inv}})$ in $M^{\mathrm{inv}}$, the corresponding group $HF(L_0,L_1)$ in $M$, and the $G$-equivariant analogue of the latter. The first two are taken with $\Z_2$ coefficients, and the last one is a module over $\Z_2[[q]]$.

The exactness assumptions put us in a situation very close to standard Morse theory. In spite of that, the analogue of the Smith inequality fails even in relatively simple examples (see Section \ref{subsec:moebius}), due to phenomena which have no analogues in the classical context. In order to rule them out, we have to impose another strong restriction on the local topology near the fixed point set, namely the existence of a {\em stable normal trivialization} (defined in Section \ref{subsec:stable-normal}).

\begin{Theorem}\label{thm:mainsmith}
If a stable normal trivialization exists, the Smith inequality holds:
\begin{equation}
\mathrm{dim}\, HF(L_0,L_1) \geq \mathrm{dim}\, HF(L_0^{\mathrm{inv}}, L_1^{\mathrm{inv}}).
\end{equation}
\end{Theorem}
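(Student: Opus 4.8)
The plan is to follow the template of the classical localization proof sketched in the introduction, but implemented on the chain level of Floer theory. First I would set up the $G=\Z/2$-equivariant Floer complex $CF_G(L_0,L_1)$ as a Borel-type construction: a chain complex over $\Z_2[[q]]$ whose homology is the equivariant Floer cohomology, built from the ordinary Floer complex $CF(L_0,L_1)$ together with a chain-level $G$-action. Because we are in the exact setting, the count of pseudoholomorphic strips reduces essentially to Morse-type combinatorics, so the equivariant complex can be modeled on $CF(L_0,L_1)[[q]]$ with a differential of the form $d_G = d + q\,d_1 + q^2 d_2 + \cdots$, where $d$ is the ordinary Floer differential and the higher $d_i$ encode the failure of the chosen data to be strictly $G$-invariant. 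The key structural input, which I would establish next, is a \emph{localization isomorphism} after inverting $q$: the natural restriction-to-fixed-points map induces an isomorphism
\begin{equation}
\K \otimes_{\Z_2[[q]]} HF_G(L_0,L_1) \;\xrightarrow{\ \iso\ }\; \K \otimes_{\Z_2[[q]]} \bigl(HF(L_0^{\inv},L_1^{\inv})[[q]]\bigr),
\end{equation}
where $\K = \Z_2((q))$. Granting this, both sides are free over $\K$ of the same rank, namely $\dim HF(L_0^{\inv},L_1^{\inv})$.

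The second half of the argument is the standard rank comparison. Since $HF_G(L_0,L_1)$ is a finitely generated $\Z_2[[q]]$-module, it splits as a free part plus torsion; the rank of the free part is exactly the $\K$-dimension computed above, i.e.\ $\dim HF(L_0^{\inv},L_1^{\inv})$. On the other hand, setting $q=0$ (or using the spectral sequence of the $q$-adic filtration on $CF_G$) gives a spectral sequence starting from $HF(L_0,L_1)$ — more precisely $HF(L_0,L_1)[[q]]$ with the first page $d_1$ induced by the chain-level map $d_1$ — and converging to $HF_G(L_0,L_1)$. Hence $\dim HF(L_0,L_1) \geq \mathrm{rk}_{\Z_2[[q]]} HF_G(L_0,L_1) = \dim HF(L_0^{\inv},L_1^{\inv})$, which is the claimed inequality. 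In outline: equivariant group interpolates between the two; it is "large" after inverting $q$ (localization) and "small" compared to the non-equivariant group (spectral sequence degeneration at worst loses dimension).

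The main obstacle, and the place where the hypothesis on the stable normal trivialization must enter, is the localization isomorphism itself. In the topological setting it is a formal consequence of the fact that free $G$-cells contribute acyclically over $\K$; in the Floer-theoretic setting one must instead show that pseudoholomorphic strips which are \emph{not} contained in $M^{\inv}$ — the "free orbits" of broken configurations — cancel in pairs after inverting $q$. This cancellation is not automatic: as the introduction warns (the Möbius example of Section \ref{subsec:moebius}), there are genuine contributions with no classical analogue, coming from the normal behaviour of $J$-holomorphic curves transverse to the fixed locus and from sign/orientation-type obstructions in the normal bundle. The stable normal trivialization is precisely what is needed to rule these out: it should give a coherent identification of the normal data along all relevant moduli spaces, allowing one to build an explicit nullhomotopy (or an involution on the non-fixed strata with no fixed points) that witnesses the acyclicity over $\K$. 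Constructing this nullhomotopy compatibly with gluing and bubbling — i.e.\ checking that the trivialization propagates across the compactified moduli spaces of strips — is the technical heart of the proof, and I expect it to occupy the bulk of the work; the transversality and orientation bookkeeping needed to make the strip counts well-defined equivariantly is a secondary but nontrivial ingredient.
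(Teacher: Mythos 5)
Your outline matches the paper's architecture exactly: a Borel-type equivariant complex $CF_{\mathrm{borel}}(L_0,L_1)$ over $\Z_2[[q]]$, a localization map that becomes an isomorphism after inverting $q$ (Theorem \ref{th:th}), and the rank comparison via the universal coefficient sequence/$q$-adic filtration (the argument of Section \ref{sec:smith-inequality}, carried over in Section \ref{subsec:Floerlocalize}). The one place where your account drifts from the actual mechanism is the role of the stable normal trivialization. You frame it as supplying a "nullhomotopy" or "free involution" to make the non-invariant strips cancel; but over $\Z_2$ the non-invariant solutions already cancel automatically, since $\iota$ acts freely on them. What can genuinely fail — and what the M\"obius example illustrates — is a \emph{dimensional} coherence: two invariant strips with the same endpoints can sit inside components of the ambient moduli space in $M$ of different expected dimension, because the anti-invariant part $D_u^{\anti}$ of the linearized operator has nonconstant index. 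This destroys the clean decomposition of $\bar\scrM(x_-,x_+)$ into invariant and non-invariant pieces and makes it impossible to define the cut-down moduli spaces underlying the localization chain map.

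The hypothesis is used to fix exactly this, and by an explicit geometric construction rather than an abstract homotopy: one \emph{stabilizes} $M$ by $\C^N$ and deforms the Lagrangians so that near the fixed locus the geometry becomes a literal product $M^{\inv}\times\C^{n_{\anti}+N}$ with boundary conditions $\R^{n_{\anti}+N}$, $i\R^{n_{\anti}+N}$. This forces $\tilde D_u^{\anti}$ to be the standard (invertible) $\bar\partial$-operator on the strip for every invariant solution $u$ simultaneously, so the invariant moduli spaces are open and closed of codimension zero in the full ones, with constant normal index. That invertibility, not pairwise cancellation, is what licenses the construction of $\lambda$ in Section \ref{subsec:Floerlocalize}. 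Also worth flagging: the localization map is not a "restriction to fixed points" — it is built by intersecting compactified non-invariant moduli spaces with zero-sets of auxiliary sections of the line bundle $\xi$ (Section \ref{sec:localisation}), so the map itself has real content beyond a projection. These refinements do not change the shape of your argument, which is essentially the paper's, but the stated reason the hypothesis is needed would not hold up as written.
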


As in the classical setting, we will deduce this from the existence of a localization map from equivariant Floer cohomology to Floer cohomology in the fixed point set (Theorem \ref{th:th}). A good finite-dimensional model is equivariant Morse theory, in the context where the Morse function does not have positive definite Hessian in normal direction to the fixed point set. In this case, \eqref{eq:restriction-map} is not just given by projection to a quotient complex, and instead involves higher-dimensional moduli spaces. The way we extract information from these spaces was influenced by Donaldson's account in \cite{Donaldson-Floerbook}. The whole of Section \ref{sec:morse} is devoted to a detailed explanation of the Morse-theoretic construction, where we are careful to use only arguments which are readily transplanted into Floer theory. In particular, we never appeal to the isomorphism between Morse and singular cohomology.

The existence of stable normal trivializations allows us to easily achieve equivariant transversality, which we rely on to define the localization map (in contrast, the definition of equivariant Floer cohomology requires no such assumption). As pointed out above, this is not just a technical condition. Our best guess for the more general situation is that the localization map should take values in a version of $HF^*(L_0,L_1)$ twisted by normal contributions (see Remark \ref{th:twist}). This idea reflects the influence of a somewhat different approach to localization in monopole Floer homology, taken by Kronheimer and Mrowka in \cite{KM}.

In spite of the restrictions on its validity, the Floer-theoretic Smith inequality has nontrivial applications. We include two of them, both concerning the invariant $\Kh_{\mathrm{symp}}^*(\kappa)$ of oriented links $\kappa \subset S^3$. This was introduced in \cite{SS}, and conjectured to be isomorphic to Khovanov's combinatorial construction \cite{khovanov-jones}, but is quite hard to compute from the definition. First, we relate the symplectic Khovanov cohomology of $\kappa$ to the Heegaard Floer homology of the double branched cover $N_\kappa \rightarrow S^3$ via spectral sequence arguments. This is similar to the relation between combinatorial Khovanov cohomology and Heegaard Floer theory established in \cite{OS-doublecover}, even though the actual constructions are quite different. In particular, we get:

\begin{corollary} \label{cor:mainone}
For any oriented link $\kappa \subset S^3$,
\begin{equation}
\mathrm{dim}\, \Kh^*_{\mathrm{symp}}(\kappa;\Z_2) \geq 2\cdot |H^2(N_\kappa;\Z)|.
\end{equation}
\end{corollary}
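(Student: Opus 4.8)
The plan is to feed the Floer-theoretic Smith inequality (Theorem \ref{thm:mainsmith}) the geometric data underlying symplectic Khovanov cohomology, and then translate both sides of the resulting dimension inequality into classical invariants. First I would recall the construction of $\Kh^*_{\mathrm{symp}}$ from \cite{SS}: presenting $\kappa$ as the plat closure of a braid on $2n$ strands (an $n$-bridge presentation) produces an exact symplectic manifold $\Y_n$, convex at infinity, together with two exact Lagrangians $L_0, L_1 \subset \Y_n$ -- the iterated relative vanishing cycles attached to the two crossingless matchings -- with $\Kh^*_{\mathrm{symp}}(\kappa;\Z_2) \cong HF(L_0,L_1)$. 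The hyperelliptic involution of the local branched-cover model acts on $\Y_n$ as a symplectic involution $\iota$ which preserves each $L_k$, so the setup of Section \ref{subsec:borel-floer} applies to $(\Y_n,\iota,L_0,L_1)$.

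Second, I would identify the fixed data. The fixed point set $\Y_n^{\mathrm{inv}}$ is symplectomorphic to the $n$-fold symmetric product $\Sym^n(\Sigma)$ of the double cover $\Sigma$ of the disc branched over the $2n$ endpoints, a surface of genus $n-1$; under this identification $L_0^{\mathrm{inv}}$ and $L_1^{\mathrm{inv}}$ become Heegaard tori $\mathbb{T}_\alpha,\mathbb{T}_\beta$ of a genus-$n$ Heegaard diagram, the extra coordinate relative to the genus-$(n-1)$ diagram for $N_\kappa$ contributing a pair of isotopic curves, so that the diagram presents $N_\kappa \# (S^1 \times S^2)$. By the work of Ozsv\'ath--Szab\'o relating Lagrangian Floer theory in symmetric products to Heegaard Floer homology, this yields
\begin{equation}
HF(L_0^{\mathrm{inv}}, L_1^{\mathrm{inv}}) \;\cong\; \widehat{HF}(N_\kappa \# (S^1\times S^2);\Z_2) \;\cong\; \widehat{HF}(N_\kappa;\Z_2)\otimes_{\Z_2}\Z_2^2 .
\end{equation}
Matching the two Floer differentials requires the usual care about almost complex structures (the ambient one need not be of Ozsv\'ath--Szab\'o product type) and about admissibility, but the exactness of $\Y_n^{\mathrm{inv}}$ and of its Lagrangians makes the energy bookkeeping benign; where a clean isomorphism is unavailable a spectral sequence comparison suffices, since only total dimensions will matter.

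Third, I would verify that $(\Y_n,\iota,L_0,L_1)$ admits a stable normal trivialization in the sense of Section \ref{subsec:stable-normal} -- essentially because the normal bundle of the fixed locus is built from pullbacks of tautological-type bundles on $\Sym^n(\Sigma)$, which become trivial after adding a trivial summand -- so that Theorem \ref{thm:mainsmith} is applicable and gives $\dim HF(L_0,L_1) \geq \dim HF(L_0^{\mathrm{inv}},L_1^{\mathrm{inv}})$. Combining the three steps,
\begin{equation}
\dim \Kh^*_{\mathrm{symp}}(\kappa;\Z_2) \;\geq\; \dim HF(L_0^{\mathrm{inv}},L_1^{\mathrm{inv}}) \;=\; 2\,\dim \widehat{HF}(N_\kappa;\Z_2) \;\geq\; 2\,|H^2(N_\kappa;\Z)| ,
\end{equation}
where the last inequality is the standard Heegaard Floer bound: $\widehat{HF}(N_\kappa,\mathfrak{s})\neq 0$ for every $\mathrm{Spin}^c$ structure (Ozsv\'ath--Szab\'o), and $\mathrm{Spin}^c(N_\kappa)$ is an affine space over $H^2(N_\kappa;\Z)$, so that when $N_\kappa$ is a rational homology sphere its cardinality is $|H^2(N_\kappa;\Z)|$ (in the non-separating case both sides are infinite and the statement is read accordingly).

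The main obstacle I anticipate is two-fold, and it sits in the second and third steps rather than in the formal combination. (i) Establishing that the fixed-point Floer cohomology genuinely computes $\widehat{HF}$ of the double branched cover: one must match moduli spaces of holomorphic discs in $\Sym^n(\Sigma)$ with the Ozsv\'ath--Szab\'o disc counts and correctly account for the extra symmetric power that is responsible for the factor $2$ -- this is exactly the point where a careful choice of almost complex structure, or a spectral sequence comparison, is needed. (ii) Verifying the stable normal trivialization hypothesis for the nilpotent-slice Lagrangians; this is the precise condition that makes Theorem \ref{thm:mainsmith} available, and it is the delicate local-topology input flagged in the introduction, so it must be checked by hand for the link data rather than assumed.
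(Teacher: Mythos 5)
Your proposal follows essentially the same route as the paper: realize $\Kh^*_{\mathrm{symp}}(\kappa)$ as $HF$ of a pair of $\iota$-invariant Lagrangians in a nilpotent slice fibre, verify that the pair has stably trivial normal structure, invoke the Floer-theoretic Smith inequality (Theorem \ref{thm:mainsmith}), pass from the fixed-point Floer group to $\widehat{HF}(N_\kappa \# S^1\times S^2)$ via the hyperelliptic-curve/symmetric-product picture, and then bound $\widehat{HF}(N_\kappa)$ from below by $|H^2(N_\kappa;\Z)|$ using nonvanishing in each $\mathrm{Spin}^c$ structure. Two details are worth flagging where your sketch diverges from what the paper actually does. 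First, the paper's stable normal trivialization (Lemma \ref{th:stablynormalexists}) is not produced from ``tautological-type bundles on $\Sym^n(\Sigma)$''; it comes from the presentation of $\Y$ as a complete intersection in affine space (which stably trivializes $T\Y$) together with the contractible totally real submanifold $\Delta$ of Lemma \ref{th:framed-vanishing}, via the commutative diagram \eqref{eq:triv-triv} applied equivariantly. Second, and more substantively, the paper explicitly \emph{cannot} prove the isomorphism $HF(L_0^{\inv},L_1^{\inv})\iso\widehat{HF}(N_\kappa\#(S^1\times S^2))$ that your display asserts: since $\Y^{\inv}$ is only the open complement of the divisor $D$ inside $\Sym_m(\X^{\inv})$, holomorphic strips in the symmetric product may cross $D$, and the paper controls this via the algebraic volume form (Remark \ref{th:pole}) and a filtration by intersection multiplicity with $D$, yielding only the inequality \eqref{eq:smith-added}. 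You correctly anticipate that a spectral sequence comparison suffices because only total dimensions are needed, but it is worth knowing that this step genuinely has content --- the authors note that an earlier claimed proof of the stronger isomorphism statement contained an error --- and that the filtration is by tangency with $D$, which in turn relies on identifying $L_\wp^{\inv}$ with the Heegaard torus $T_\wp$ through the interpolation of symplectic forms in Lemma \ref{th:interpolate}.
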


The right hand side (which, by definition, vanishes if $b_1(N_\kappa) > 0$) is also (twice) the value of the Alexander polynomial at $-1$, known classically as the determinant of the knot \cite[Corollary 9.2]{Lickorish}.  There is (currently unpublished) a spectral sequence from combinatorial to symplectic Khovanov homology, at least with $\Z_2$ coefficients. Together with the results of Section \ref{sec:four} below, and the main theorem of \cite{OS-doublecover}, this implies that for alternating knots the combinatorial and symplectic Khovanov homologies with $\Z_2$ coefficients have the same rank. The localization map also provides additional information, which has no direct counterpart in any other known construction. This should give rise to new link invariants, called cokernel polynomials, conjecturally of relevance to the Jones polynomial (see Section \ref{subsec:speculate} for a discussion of this and other possible further developments).

For our second application, consider the double covering $S^3 \rightarrow S^3$ branched over the unknot. Let $\bar\kappa \subset S^3$ be an oriented link which avoids the branch locus, and $\kappa$ its preimage.

\begin{corollary} \label{cor:maintwo}
For any $\kappa$ as described above,
\begin{equation}
\mathrm{dim}\,\Kh_{\mathrm{symp}}^*(\kappa;\Z_2) \geq \mathrm{dim}\, \Kh^*_{\mathrm{symp}}(\bar\kappa;\Z_2).
\end{equation}
\end{corollary}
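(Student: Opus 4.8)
\textbf{Proof strategy for Corollary \ref{cor:maintwo}.}
The plan is to exhibit both symplectic Khovanov groups as Lagrangian Floer cohomologies fitting the hypotheses of Theorem \ref{thm:mainsmith}, and then simply quote that theorem. Recall from \cite{SS} that $\Kh_{\mathrm{symp}}^*(\kappa)$ is computed as $HF^*(L_0,L_1)$ for a pair of exact Lagrangian submanifolds of a nilpotent slice $\Y$ attached to a bridge (plat) presentation of $\kappa$: here $L_0 = \beta(\vartheta_0)$ and $L_1 = \vartheta_1$, where $\vartheta_0,\vartheta_1$ are distinguished Lagrangians associated to crossingless matchings and $\beta$ acts by the symplectic monodromy of the braid, and $\Y$ fibres over a configuration space of points in $\C$, the number of points being governed by the number of bridges. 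The first step is to present things compatibly with the branched covering $p\colon S^3 \to S^3$: one realises $\kappa = p^{-1}(\bar\kappa)$ by a braid $\beta$ and matchings $\vartheta_0,\vartheta_1$ all invariant under the deck transformation of $p$, with $\bar\kappa$ as the quotient; so $\beta$ is a ``doubling'' of a braid $\bar\beta$ presenting $\bar\kappa$, and the associated configuration in $\C$ is reflection-symmetric, its symmetric part controlling $\bar\kappa$.

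The second step converts the deck transformation into an exact symplectic involution: the reflection of the base configuration space lifts to an exact symplectomorphism $\iota$ of $\Y$, and because $\beta$ is symmetric its monodromy has an $\iota$-equivariant representative, so $\iota$ preserves both $L_0 = \beta(\vartheta_0)$ and $L_1 = \vartheta_1$. One then identifies the fixed point set $\Y^{\mathrm{inv}}$, up to exact symplectomorphism, with the nilpotent slice for $\bar\kappa$, carrying $L_0^{\mathrm{inv}}, L_1^{\mathrm{inv}}$ to the Lagrangians that define $\Kh_{\mathrm{symp}}^*(\bar\kappa)$. Granting this, Theorem \ref{thm:mainsmith} applied to $(M,L_0,L_1) = (\Y, \beta(\vartheta_0), \vartheta_1)$ gives $\dim HF^*(L_0,L_1) \geq \dim HF^*(L_0^{\mathrm{inv}}, L_1^{\mathrm{inv}})$, which is the asserted inequality; feeding the same data into Theorem \ref{th:th} would yield the finer, cokernel-level statements alluded to in the introduction.

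The crux---and the step I expect to be the main obstacle---is verifying the one nontrivial hypothesis of Theorem \ref{thm:mainsmith}, namely the existence of a stable normal trivialization (Section \ref{subsec:stable-normal}) for $\iota$. This cannot be finessed, since without it the Smith inequality genuinely fails (Section \ref{subsec:moebius}), so it must be extracted from the specific geometry of the nilpotent slices. I would analyse the normal bundle $\nu$ of $\Y^{\mathrm{inv}}$ inside $\Y$ in the model of \cite{SS} for $\Y$: the $(-1)$-eigenbundle of $d\iota$ along the fixed locus should be identified as an explicit sum of line bundles pulled back from the base, and, using that $\Y$ is affine so that the bundles in play are controlled by topological data, one computes characteristic classes to see that the trivialization demanded in Section \ref{subsec:stable-normal} exists, compatibly with the framing of $\vartheta_0,\vartheta_1$ and with the convexity structure at infinity. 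A more routine final point is to check that $\iota$ is compatible with the structures entering the definition of equivariant Floer cohomology (Section \ref{subsec:borel-floer}), so that the hypotheses of Theorem \ref{thm:mainsmith} hold on the nose.
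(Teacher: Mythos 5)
Your overall strategy matches the paper's: realize $\Kh_{\mathrm{symp}}(\kappa)$ and $\Kh_{\mathrm{symp}}(\bar\kappa)$ as Floer cohomologies in $\Y$ and $\Y^{\mathrm{inv}}$ for an involution lifting $t \mapsto -t$ on the base configuration space, and then invoke the Smith inequality. The paper makes the involution explicit on the slice itself (the formula $\iota(A_1,\dots,A_m) = (-A_1, A_2, -A_3, \dots)$, which needs $m$ even) and identifies $\Y^{\mathrm{inv}}$ with the $\mathfrak{gl}_m$-slice via $t\mapsto t^2$ on configurations; your description is consistent with this, though you don't note the parity constraint.

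There are two places where your analysis of the difficulties is off. First, you single out the stable normal trivialization as the crux and propose a characteristic-class computation for a decomposition of the normal bundle into line bundles pulled back from the base. In the paper this step is actually the \emph{easy} part: it is dispatched by the same argument already used for Lemma \ref{th:stablynormalexists} in the transpose-involution case, namely that $T\Y$ is canonically stably trivial (as the fibre of a regular map between affine spaces), that the Lagrangians $L_\wp$ admit a contractible totally real thickening $\Delta$ inducing a stable framing, and that the commutative diagram \eqref{eq:triv-triv} shows these framings agree compatibly with $\iota$. No computation of Chern or Stiefel--Whitney classes of the normal bundle itself is required; the normal framing falls out of the ambient stable framing by restriction. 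Second, you pass over the genuinely nontrivial point, which is identifying the resulting Floer cohomology with $\Kh_{\mathrm{symp}}$. The definition of $\Kh_{\mathrm{symp}}$ in \cite{SS} is tied to the nested-horseshoe matching, which is \emph{not} symmetric under $t\mapsto -t$. To get a symmetric presentation one must put $t$ at the $2m$-th roots of unity, take $\wp$ to be arcs of the unit circle, and present $\kappa$ as a \emph{circular} plat closure. That $HF(L_\wp,\beta(L_\wp))$ for this matching still computes $\Kh_{\mathrm{symp}}$ of the circular plat closure is not a formality---it is Lemma \ref{th:wal}, relying on Waldron's extension of \cite{SS} to links in admissible position. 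Your writeup treats the Floer-theoretic realization of $\Kh_{\mathrm{symp}}$ in the symmetric presentation as given, which elides exactly this input.
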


In the original combinatorial Khovanov theory, any generator of the chain complex for $\bar\kappa$ lifts to two generators of the complex for $\kappa$, but their bi-degrees are in general unrelated. As far as we know, the analogue of Corollary \ref{cor:maintwo} for the combinatorial theory is not known to hold.

\Acknowledgments The first author would like to thank Tom Mrowka and Tim Perutz for explaining aspects of monopole Floer homology to him. The second author would like to thank Robert Lipshitz, Ciprian Manolescu, Peter Ozsv\'ath, and Jake Rasmussen for helpful discussions on the material of Section \ref{sec:four}. The first author would also like to apologize to the second author for the long delay (more than five years) in getting this paper ready for publication.

The first author was partially supported by NSF grants DMS-0405516 and DMS-0652620. The second author was partially supported by European Research Council grant ERC-2007-StG-205349.

\section{Morse theory\label{sec:morse}}

\subsection{Equivariant Morse cohomology\label{sec:equivariant-morse}}

Let $M$ be a closed smooth manifold, equipped with an action of the group $G = \Z/2$, or equivalently with an involution $\iota$. Form the Borel construction
\begin{equation} \label{eq:borel-constr}
M_{\mathrm{borel}} = M \times_G EG = M \times_G S^{\infty} \longrightarrow \RP{\infty} = BG = S^\infty/G.
\end{equation}
This is a locally trivial bundle with fibre $M$ and monodromy $\iota$ around the non-trivial element of $\pi_1(\RP{\infty}) = G$. Classically, one defines equivariant cohomology as the cohomology of $M_{\mathrm{borel}}$, and this approach is also suitable in a Morse-theoretic context, in particular if one wishes to avoid equivariant transversality issues. We will explain one version of the resulting definition, chosen for its easy extendability to Floer theory. The particular advantage is that the underlying moduli spaces are solutions of a gradient-type equation on $M$, with the $\RP{\infty}$ direction serving only as a parameter space. This is the approach used to define general family Floer homology theories in \cite{hutchings}; a related idea for $S^1$-equivariant Floer homology appears earlier in \cite{viterbo-functorsI}.

Equip $\RP{\infty}$ with its standard Morse function $h$, whose pullback to the covering space $S^{\infty} \subset \R^{\infty}$ is $h(z_0,z_1,z_2,\dots) = |z_1|^2 + 2|z_2|^2 + \cdots$, and with its standard round Riemannian metric (technically, it may be better to think of the infinite-dimensional projective space as the union of its finite-dimensional counterparts, equipped with the restrictions of the metric and Morse function). $h$ has exactly one critical point $z^{(k)}$ of each index $k \geq 0$, and the associated gradient flow is regular (Morse-Smale). Next, choose a smooth family of functions on the fibres of \eqref{eq:borel-constr}, or equivalently a family of functions $f_z$ on $M$ parametrized by $z \in S^\infty$ and satisfying $f_{-z} = f_z \circ \iota$. We also want a corresponding family of metrics (to keep the notation reasonably short, we don't give names to the metrics; nevertheless, whenever we talk about $\nabla f_z$, it is implicit that the associated metric should be used). This choice should be made in such a way that the functions $f^{(k)} = f_{z^{(k)}}$ associated to the critical points are Morse. Moreover, for technical convenience, we assume that our family is locally constant in a small neighbourhood of each $z^{(k)}$. Denote by $C^{*,(k)}$ the Morse cochain space generated by the critical points of $f^{(k)}$. Define the Borel-type equivariant cochain group to be
\begin{equation} \label{eq:borel-complex}
C^*_{\mathrm{borel}} = \prod_{k=0}^\infty C^{*-k,(k)}.
\end{equation}
To see how the differential on this is constructed, take a map $v: \R \rightarrow \RP{\infty}$ which is a gradient flow line of $h$, with limits $z^{(j)}$ and $z^{(k)}$ at negative respectively positive times. Take also a map $u: \R \rightarrow M_{\mathrm{borel}}$ which lifts $v$, and which satisfies the equation
\begin{equation} \label{eq:u-equation}
(du/ds)^{\mathrm{vert}} = \nabla f_{v(s)}(u(s)).
\end{equation}
Here, we have used the locally trivial connection on $M_{\mathrm{borel}} \rightarrow \RP{\infty}$ to take the vertical, or fibrewise, component of $du/ds$. If $j = k$, meaning that $v$ is constant, this is just a solution of the gradient flow equation for $f^{(j)}$, so its limits will be critical points $x_{\pm}^{(j)}$ of $f^{(j)}$. In the other cases, one can simplify the equation by trivializing $M_{\mathrm{borel}} \rightarrow \RP{\infty}$ along (the closure of) $v$. With respect to this trivialization, $f_{v(s)}$ is a family $f_v$ of functions on $M$, depending on the parameter $s \in \R$, with the property that $f_{v,s} = f^{(j)}$ for $s \ll 0$, $f_{v,s} = f^{(k)}$ for $s \gg 0$. In these terms, \eqref{eq:u-equation} is the associated continuation map equation on $M$,
\begin{equation}
du/ds = \nabla f_{v,s}(u(s)).
\end{equation}
Clearly, the limits as $s \rightarrow \pm\infty$ will be critical points $x_-^{(j)}$ of $f^{(j)}$ and $x_+^{(k)}$ of $f^{(k)}$. In either situation, fixing $x_-^{(j)}$ and $x_+^{(k)}$, we denote by $\scrM_{\mathrm{borel}}(x_-^{(j)},x_+^{(k)})$ the moduli space of pairs $(v,u)$ mod common translation by $\R$, excluding as usual the trivial case where both $v$ and $u$ are constant. Provided that the Morse functions and metrics have been chosen generically, these moduli spaces are smooth with
\begin{equation}
\mathrm{dim}\, \scrM_{\mathrm{borel}}(x_-^{(j)},x_+^{(k)}) =
i_M(x_+^{(k)}) - i_M(x_-^{(j)}) + k - j - 1,
\end{equation}
where $i_M$ is the Morse index taken in the fibre. Moreover, there are compactifications ${\bar\scrM}_{\mathrm{borel}}(x_-^{(j)},x_+^{(k)})$ by broken solutions, with the same properties as in ordinary Morse cohomology. We then define $d_{\mathrm{borel}}: C^*_{\mathrm{borel}} \rightarrow C^{*+1}_{\mathrm{borel}}$ as usual by
\begin{equation}
d_{\mathrm{borel}}(x_-^{(j)}) = \sum_k \sum_{x_+^{(k)}}
\#\scrM_{\mathrm{borel}}(x_-^{(j)},x_+^{(k)})\, x_+^{(k)},
\end{equation}
where $\#$ is counting isolated points mod $2$. The cohomology of this complex is isomorphic to ordinary equivariant cohomology (the proof is not difficult, but we will not describe it here).

The construction above leaves a lot of freedom of choice, and that can even be a little confusing. For instance, the structure of equivariant cohomology as a module over $H^*(\RP{\infty};\Z_2) = \Z_2[q]$ is not immediately visible. To improve the situation, note that there is an isometric embedding of $\RP{\infty}$ into itself, namely the infinite shift $\tau(z_0,z_1,z_2,\cdots) = (0,z_0,z_1,\cdots)$. This satisfies
\begin{equation}
\tau^*h = h+1,
\end{equation}
hence preserves the gradient flow of $h$. Lift $\tau$ to an embedding $\sigma$ of $M_{\mathrm{borel}}$ into itself, which is a fibrewise isomorphism (there are two ways of doing that, differing by $\iota$; just pick one). It then makes sense to ask that the family of Morse functions should be invariant under this, in the sense that $f_z = \sigma^*(f_{\tau(z)})$, and the same for the metrics. In particular, all the $f^{(k)}$ can be identified with a fixed Morse function $f = f^{(0)}$ on $M_{\mathrm{borel},0} = M$, so that
\begin{equation} \label{eqn:e2page}
C^*_{\mathrm{borel}} = C^*[[q]] = C^* \otimes_{\Z_2} \Z_2[[q]],
\end{equation}
where $C^*$ is the Morse cochain space of $f$, and $q$ is a formal variable of degree $1$ (the distinction between polynomials $C^*[q]$ and formal power series $C^*[[q]]$, just like that between a direct sum and product in \eqref{eq:borel-complex}, is irrelevant because only finitely powers of $q$ appear in any given degree; we choose the formal power series notation because that's what's appears naturally in more general contexts, compare the discussion in \cite{jones}). The generator $xq^k$ of this corresponds to our previous $x^{(k)}$, namely $x$ considered as a critical point of $f^{(k)}$.

A little reflection shows that this still leaves enough freedom for all the necessary transversality properties to hold (the same issue will appear later in Floer theory, where the argument is entirely parallel, cf. Section \ref{Section:equivariant}). Namely, consider flow lines $v$ of $h$ going from $z^{(0)}$ to $z^{(k)}$ for some $k>0$. The invariance condition only constrains the choice of $f_{v(s)}$ for $s \gg 0$, where $v$ converges to $z^{(k)}$. Hence, it still leaves enough freedom to make the moduli spaces of solutions $u$ lying over such $v$ regular. Invariance then implies that all other spaces are regular as well. In fact, these moduli spaces only depend on $x_{\pm} \in M$ and the difference $k-j$, and we'll therefore write them as $\scrM_{\mathrm{borel}}(x_-,x_+)^{(k-j)}$. The differential is then the $\Z_2[[q]]$-linear map defined by
\begin{equation} \label{eq:first-borel}
d_{\mathrm{borel}}(x_-) = d^{(0)} + d^{(1)}q + d^{(2)}q^2 + \cdots = \sum_i \Big(
\sum_{x_+} \#\scrM_{\mathrm{borel}}(x_-,x_+)^{(i)} \, x_+\Big)q^i.
\end{equation}
It may be helpful to write down the first couple of terms in more concrete terms. The start of the construction is the choice of $f$, and $d^{(0)} = d$ is the standard Morse cohomology differential associated to that function. At the next level, $h$ has two gradient flow lines $v^{(1,\pm)}$ leading from $z^{(0)}$ to $z^{(1)}$. Correspondingly, we should choose two families of Morse functions $f^{(1),+}$ and $f^{(1),-}$ depending on a parameter $s \in \R$, satisfying
\begin{equation}
f^{(1),\pm}_s = f \text{ for $s \ll 0$;} \qquad f^{(1),+}_s = f, \;
f^{(1),-}_s = \iota^*f \text{ for $s \gg 0$.}
\end{equation}
Consider the associated continuation maps. In the $+$ case, the endpoints of the family $f^{(1),+}$ are the same, which means that the continuation map is chain homotopic to the identity. It will be strictly equal to the identity if one chooses $f^{(1),+}$ constant, and we will assume from now on that this is the case. After identifying the Morse complexes of $f$ and $\iota^*f$ in the obvious way, the other continuation map also becomes an endomorphism of $C^*$. This is the Morse-theoretic realization of the action of $\iota$ on cohomology, and we will denote it by $\iota_{\mathrm{morse}}$. One defines the $q^1$ component of \eqref{eq:first-borel} to be\footnote{Even though we work with $\Z_2$-coefficients, signs may occasionally appear in our formulae. These are intended as an aid to the reader's intuition, and may of course be ignored.}
\begin{equation} \label{eq:d-1}
d^{(1)} = \iota_{\mathrm{morse}} - \mathrm{id}.
\end{equation}
Going beyond that, we have two one-parameter families of gradient flow lines of $h$ going from $z^{(0)}$ to $z^{(2)}$. The first family connects the broken flow lines $(v^{(1),-},\tau(v^{(1),+}))$ and $(v^{(1),+},\tau(v^{(1),-}))$. In principle, one should associate to this a one-parameter family of continuation map equations, which then gives rise to a chain homotopy from $\iota_{\mathrm{morse}}$ to itself; however, in analogy with the previous step, a careful choice of the relevant functions will ensure that the resulting moduli space has no isolated points, so that the chain homotopy is zero. The second family connects $(v^{(1),+},\tau(v^{(1),+}))$ and $(v^{(1),-},\tau(v^{(1),-}))$, hence gives rise to a chain homotopy between $\iota_{\mathrm{morse}}^2$ and the identity, which will be the next component $d^{(2)}$ of the equivariant differential. The following term can be thought of as a secondary homotopy between $d^{(2)} \circ \iota_{\mathrm{morse}}$ and $\iota_{\mathrm{morse}} \circ d^{(2)}$, but at even higher orders the picture becomes too complicated to admit an intuitive interpretation.

\subsection{Invariant Morse functions\label{sec:invariant-morse}}
There is an alternative approach leading to a somewhat smaller chain complex than \eqref{eqn:e2page}, which however assumes equivariant transversality. Suppose from now on that our manifold $M$ comes with a $G$-invariant Morse function $f$ and an invariant Riemannian metric. Note that the restriction of $f$ to the fixed point set $M^{\mathrm{inv}} \subset M$ is automatically again Morse. We will further assume that the following conditions are satisfied:
\begin{itemize}
\item
There is a constant $i_{\mathrm{anti}}$, called the normal index of $f$, such that for every $\iota$-invariant critical point, the Morse indices in $M$ and $M^{\mathrm{inv}}$ are related by
\begin{equation} \label{eq:normal-index}
i_M(x) = i_{M^{\mathrm{inv}}}(x) + i_{\mathrm{anti}}.
\end{equation}
\item
The gradient flow of $f$ and that of $f|M^{\mathrm{inv}}$ both satisfy the Morse-Smale condition.
\end{itemize}
The two conditions are logically independent, but still related (since the first one implies that moduli spaces of gradient flow lines in $M$ and $M^{\mathrm{inv}}$ have the same dimension; if the dimension in $M^{\mathrm{inv}}$ was bigger, the Morse-Smale conditions would be contradictory).

\begin{example} \label{th:trough}
For a given $M$ and $\iota$, one can always find a function $f$ which satisfies the condition above. Namely, start with a Morse-Smale function on $M^{\mathrm{inv}}$; extend it to a tubular neighbourhood of $M^{\mathrm{inv}}$ in $M$ by adding a large positive definite quadratic form in normal directions (which means that $i_{\mathrm{anti}} = 0$); use a partition of unity to patch it together with a given invariant Morse function on $M \setminus M^{\mathrm{inv}}$; and then perturb the result slightly if necessary, preserving symmetry. Still, the conditions above are nontrivial, in the sense that they do not hold generically in the space of invariant Morse functions. This will become more important when discussing Floer theory, where topological obstructions to equivariant transversality arise.
\end{example}

Before continuing, we need to set up some notation. Given critical points $x_-$ and $x_+$ of $f$, let $\scrM(x_-,x_+)$ be the standard Morse-theoretic moduli space of unparametrized gradient flow lines, excluding constant ones, and $\bar\scrM(x_-,x_+)$ the compactification by broken flow lines. We also want to consider trajectories whose limits lie in a given $G$-orbit $G x_\pm = \{x_\pm,\iota(x_\pm)\}$. The resulting moduli space, denoted by $\scrM(G x_-,G x_+)$, carries a $G$-action. We write $\scrM(G x_-,G x_+)^{\mathrm{inv}}$ for the fixed part, and $\scrM(G x_-,G x_+)^{\mathrm{non}}$ for the quotient of the free part. The action extends to the compactification $\bar\scrM(G x_-,G x_+)$, and we analogously define spaces $\bar\scrM(G x_-,G x_+)^{\mathrm{inv}}$, $\bar\scrM(G x_-,G x_+)^{\mathrm{non}}$. More concretely, note that if for instance $x_-$ is not $\iota$-invariant, then the $G$-action is free on the whole of $\scrM(G x_-,Gx_+)$, and the quotient can be identified with $\scrM(x_-,x_+) \cup \scrM(x_-,\iota(x_+))$. This also holds for the compactification, and correspondingly in the case where $x_+$ is not $\iota$-invariant. In the remaining situation where both limit points are invariant, so that $\scrM(G x_-,G x_+) = \scrM(x_-,x_+)$, one can identify the fixed point set $\scrM(x_-,x_+)^{\mathrm{inv}}$ with the space of flow lines in $M^{\mathrm{inv}}$, and similarly for the compactification. A less trivial observation is that in this situation, the Morse-Smale conditions and \eqref{eq:normal-index} imply that $\bar\scrM(x_-,x_+)^{\mathrm{inv}}$ is a submanifold of codimension zero in $\bar\scrM(x_-,x_+)$, hence open and closed. Therefore, $\bar\scrM(x_-,x_+)^{\mathrm{non}}$ is a compact manifold with corners\footnote{\label{foot}It is a well-known problem that such compactifications do not have a canonical smooth structure in transverse direction to the boundary strata, which means that they are not quite ``manifold with corners'' in the usual $\smooth$ sense. This means that some care needs to be taken when using standard differential topology constructions.}, being a quotient of a free $G$-action on the compact manifold with corners $\bar\scrM(x_-,x_+) \setminus \bar\scrM(x_-,x_+)^{\mathrm{inv}}$.

Let $C^*_{\mathrm{inv}}$ be the graded vector space over $\Z_2$ freely generated by invariant critical points, with the grading $\mathrm{deg}(x) = i_{M^{\mathrm{inv}}}(x) = i_M(x) - i_{\mathrm{anti}}$. Similarly let $C^*_{\mathrm{non}}$ be the space generated by pairs $G x = \{x,\iota(x)\}$ of non-invariant critical points. One then has natural maps
\begin{equation}
\begin{aligned}
 & d_{\mathrm{inv}}: C^*_{\mathrm{inv}} \longrightarrow C_{\mathrm{inv}}^{*+1}, \\
 & d_{\mathrm{non}}: C^*_{\mathrm{non}} \longrightarrow C_{\mathrm{non}}^{*+1}, \\
 & D_1: C^*_{\mathrm{non}} \longrightarrow C^{*-i_{\mathrm{anti}}+1}_{\mathrm{inv}}, \\
 & D_2: C^{*-i_{\mathrm{anti}}}_{\mathrm{inv}} \longrightarrow C^{*+1}_{\mathrm{non}},
\end{aligned}
\end{equation}
defined as follows. $d_{\mathrm{inv}}$ is the Morse cohomology differential for $f|M^{\mathrm{inv}}$, hence counts the number of isolated $\iota$-invariant connecting orbits. Since we use $\Z_2$-coefficients, and the non-invariant connecting orbits between invariant critical points come in pairs, we may just as well count all critical orbits, meaning that
\begin{equation}
 d_{\mathrm{inv}}(x_-) = \sum_{x_+} \# \scrM(x_-,x_+)\, x_+
\end{equation}
where the sum is over all invariant $x_+$. Next, connecting trajectories between non-invariant critical points come in pairs exchanged by $\iota$. The map $d_{\mathrm{non}}$ counts each pair once (formally, one can see this as the Morse differential for the induced function on the quotient $(M \setminus M^{\mathrm{inv}})/G$):
\begin{equation}
 d_{\mathrm{non}}(G x_-) = \sum_{G x_+} \#\scrM(G x_-,G
 x_+)^{\mathrm{non}}\, G x_+.
\end{equation}
Similarly $D_2$ counts connecting trajectories from invariant to non-invariant critical points, and vice versa for $D_1$, again using the spaces $\scrM(G x_-,G x_+)^{\mathrm{non}}$.

To introduce yet another piece of data, we pick one out of each pair of non-invariant critical points, and denote the resulting set of preferred points by $P$. One can then count only those isolated connecting trajectories which go from a preferred point to a non-preferred one, which yields another map
\begin{equation}
 U: C^*_{\mathrm{non}} \longrightarrow C^{*+1}_{\mathrm{non}}.
\end{equation}
The Morse complex for $f$, in the ordinary sense, can now be written as $C^* = C^*_{\mathrm{non}} \oplus C^*_{\mathrm{non}} \oplus C^{*-i_{\mathrm{anti}}}_{\mathrm{inv}}$, with the differential
\begin{equation} \label{eq:d-morse}
d = \begin{pmatrix}
 d_{\mathrm{non}} & 0 & 0 \\
 U & d_{\mathrm{non}} & D_2 \\
 D_1 & 0 & d_{\mathrm{inv}}
\end{pmatrix}.
\end{equation}
Here, the first copy of $C^*_{\mathrm{non}}$ is identified with the subspace of $C^*$ generated by the critical points $x \in P$, whereas the second copy is generated by sums $x + \iota(x)$; this explains the asymmetry in \eqref{eq:d-morse}. The relation $d^2 = 0$ yields various equations between its components; some which will be relevant later on are
\begin{equation} \label{eq:slurp}
\begin{aligned}
 d_{\mathrm{non}} D_2 + D_2 d_{\mathrm{inv}} & = 0, \\
 d_{\mathrm{non}} U + U d_{\mathrm{non}} &  = D_2D_1.
\end{aligned}
\end{equation}

We can now introduce the equivariant Morse complex. Algebraically, this is defined by starting with the canonical induced involution on $C^*$, namely
\begin{equation} \label{eq:iota-morse}
\iota_{\mathrm{morse}}(a,b,c) = (a,a+b,c),
\end{equation}
and the standard free resolution of the trivial $\Z_2[G]$-module, which is
\begin{equation} \label{eq:abstract-borel}
 P^* = \{ \cdots \longrightarrow \Z_2[G] \xrightarrow{\mathrm{id}+\iota}
 \Z_2[G] \xrightarrow{\mathrm{id}-\iota} \Z_2[G]
 \longrightarrow 0\},
\end{equation}
and forming $\mathrm{Hom}_{\Z_2[G]}(P^*,C^*)$. Concretely, the outcome is $C^*_{\mathrm{borel}} = C^*_{\mathrm{non}}[[q]] \oplus C^*_{\mathrm{non}}[[q]] \oplus C^{*-i_{\mathrm{anti}}}_{\mathrm{inv}}[[q]]$ with
\begin{equation} \label{eq:borel}
d_{\mathrm{borel}} = d + (\iota_{\mathrm{morse}}-\mathrm{id}) q =
\begin{pmatrix} d_{\mathrm{non}} & 0 & 0 \\
U+q & d_{\mathrm{non}} & D_2 \\
D_1 & 0 & d_{\mathrm{inv}}
\end{pmatrix}.
\end{equation}
Here $d_{\mathrm{non}}$, $d_{\mathrm{inv}}$, $D_1$, $D_2$, $U$ have been extended to $\Z_2[[q]]$-linear maps in the obvious way. To make the connection with the preceding, more general, construction, observe that in the present context, one can extend $f$ in a locally constant way to a family of Morse functions on the fibres of $M_{\mathrm{borel}}$. With these choices, our previous definition of $\iota_{\mathrm{morse}}$ agrees with \eqref{eq:iota-morse}, and the higher order terms all vanish, which means that \eqref{eq:first-borel} indeed reduces to \eqref{eq:borel}.

\begin{lemma} \label{th:acyclic}
The elements $(a,0,0)$, $a \in C^*_{\mathrm{non}}[[q]]$, together with their $d_{\mathrm{borel}}$-images, span an acyclic subcomplex $Z^* \subset C^*_{\mathrm{borel}}$. Moreover, the inclusion $j: C^*_{\mathrm{non}} \oplus C^{*-i_{\mathrm{anti}}}_{\mathrm{inv}}[[q]] \rightarrow C^*_{\mathrm{borel}}$, $(b,c) \mapsto (0,b q^0,c)$ descends to an isomorphism of graded vector spaces
\begin{equation} \label{eq:reduce}
C^*_{\mathrm{non}} \oplus C^{*-i_{\mathrm{anti}}}_{\mathrm{inv}}[[q]] \longrightarrow
C^*_{\mathrm{borel}}/Z^*.
\end{equation}
\end{lemma}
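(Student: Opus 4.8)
The plan is to verify directly that $Z^*$ is a subcomplex, that it is acyclic, and that the quotient has the stated form. From the matrix \eqref{eq:borel}, an element of the form $(a,0,0)$ with $a\in C^*_{\mathrm{non}}[[q]]$ maps under $d_{\mathrm{borel}}$ to $\bigl(d_{\mathrm{non}}a,\ (U+q)a,\ D_1 a\bigr)$. So $Z^*$ is spanned by the $(a,0,0)$ together with these images; to see it is a subcomplex one only needs $d_{\mathrm{borel}}^2=0$ (already known, since $d_{\mathrm{borel}}$ is a differential), so $d_{\mathrm{borel}}$ of an image is zero, and $d_{\mathrm{borel}}(a,0,0)$ lies in $Z^*$ by construction. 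The key structural point is that the component $(U+q)a$ of $d_{\mathrm{borel}}(a,0,0)$ in the \emph{second} copy of $C^*_{\mathrm{non}}[[q]]$ is, up to the invertible-on-that-factor operator $U+q$, just $a$ again: since $U$ raises degree, $U+q$ is an isomorphism of $C^*_{\mathrm{non}}[[q]]$ (its inverse is the geometric series $q^{-1}\sum_{i\ge 0}(-U/q)^i$ — wait, $U$ does not lower $q$-degree; rather $U+q = q(1+U/q)$ is not obviously invertible over $\Z_2[[q]]$). Let me instead argue as follows: filter by powers of $q$. Modulo $q$, $d_{\mathrm{borel}}(a,0,0)\equiv (d_{\mathrm{non}}a, Ua, D_1 a)$, and the leading ($q^0$) behavior of the pair $(a,0,0)\mapsto$ second coordinate is $a\mapsto Ua$, which is \emph{not} the identity — so the cleanest route is the explicit contraction below.

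Concretely, I would exhibit an explicit nullhomotopy on $Z^*$. Define $K:Z^*\to Z^*$ as follows: on a generator $(a,0,0)$ set $K(a,0,0)=0$; on the image $d_{\mathrm{borel}}(a,0,0)=(d_{\mathrm{non}}a,(U+q)a,D_1 a)$ — which, as $a$ ranges over $C^*_{\mathrm{non}}[[q]]$, gives a set of elements in bijection with the $a$'s (if $(d_{\mathrm{non}}a,(U+q)a,D_1a)=0$ then in particular $(U+q)a=0$; because $U$ strictly raises the Morse degree while preserving $q$-degree, a downward induction on degree forces $a=0$, so the assignment $a\mapsto d_{\mathrm{borel}}(a,0,0)$ is injective) — set $K\bigl(d_{\mathrm{borel}}(a,0,0)\bigr)=(a,0,0)$. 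Then on generators $d_{\mathrm{borel}}K + K d_{\mathrm{borel}} = \mathrm{id}$: on $(a,0,0)$, $K d_{\mathrm{borel}}(a,0,0)=(a,0,0)$ and $d_{\mathrm{borel}}K(a,0,0)=0$; on $d_{\mathrm{borel}}(a,0,0)$, $d_{\mathrm{borel}}K(d_{\mathrm{borel}}(a,0,0))=d_{\mathrm{borel}}(a,0,0)$ and $K d_{\mathrm{borel}}(\cdots)=K(0)=0$. Hence $\mathrm{id}_{Z^*}$ is chain-homotopic to $0$, so $Z^*$ is acyclic. (One must check $K$ is well-defined, i.e. $Z^*$ is the direct sum of the span of the $(a,0,0)$ and the span of their images; this follows because an element $(a,0,0)+d_{\mathrm{borel}}(a',0,0)$ vanishes only if $a=0$, looking at the first coordinate's "pure $(a,0,0)$" part versus the image part — more carefully, the first coordinate of $(a,0,0)$ is $a$ while the first coordinate of $d_{\mathrm{borel}}(a',0,0)$ is $d_{\mathrm{non}}a'$, the second coordinate of $(a,0,0)$ is $0$ while that of the image is $(U+q)a'$; so vanishing of the sum gives $(U+q)a'=0$ hence $a'=0$ hence $a=0$.)

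For the second statement, the map $j:(b,c)\mapsto(0,bq^0,c)$ is a chain map into $C^*_{\mathrm{borel}}$ only after passing to the quotient — we only claim it descends to an isomorphism of \emph{graded vector spaces} onto $C^*_{\mathrm{borel}}/Z^*$, which is a purely linear statement. As a graded vector space $C^*_{\mathrm{borel}}=C^*_{\mathrm{non}}[[q]]\oplus C^*_{\mathrm{non}}[[q]]\oplus C^{*-i_{\mathrm{anti}}}_{\mathrm{inv}}[[q]]$, and $Z^*$ is spanned by $\{(a,0,0)\}\cup\{((d_{\mathrm{non}}a,(U+q)a,D_1a)\}$. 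I claim $Z^*$ is exactly the graph-type subspace $\{(a,\,(U+q)a',\,D_1 a'')\ :\ \dots\}$ — more usefully: $Z^*$ contains all $(a,0,0)$, hence killing $Z^*$ trivializes the first factor; and modulo those, $Z^*$ also contains $(0,(U+q)a,D_1 a)$ for all $a$, i.e. the graph of the map $a\mapsto(D_1 a)$ sitting over $(U+q)a$ in the second factor. Since $U+q$ is injective on $C^*_{\mathrm{non}}[[q]]$ with image a complement... actually over $\Z_2[[q]]$, $U+q=q(1+q^{-1}U)$ is not surjective; but that is fine, because we will instead just count dimensions degree-by-degree: in each fixed total degree $n$, $C^*_{\mathrm{borel}}$ involves only finitely many powers of $q$, so everything reduces to finite-dimensional linear algebra, where $\dim Z^*_n = \dim\{(a,0,0)\}_n + \dim\{\mathrm{images}\}_n$ because of the direct-sum decomposition established above, and then $\dim(C^*_{\mathrm{borel}}/Z^*)_n = \dim(C^*_{\mathrm{borel}})_n - \dim Z^*_n$ matches $\dim(C^*_{\mathrm{non}}\oplus C^{*-i_{\mathrm{anti}}}_{\mathrm{inv}}[[q]])_n$. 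To upgrade the dimension count to the specific isomorphism $j$, I check that the composite $C^*_{\mathrm{non}}\oplus C^{*-i_{\mathrm{anti}}}_{\mathrm{inv}}[[q]]\xrightarrow{j}C^*_{\mathrm{borel}}\to C^*_{\mathrm{borel}}/Z^*$ is injective: if $(0,bq^0,c)\in Z^*$ then writing it as $(a,0,0)+(d_{\mathrm{non}}a',(U+q)a',D_1a')$ the first coordinate gives $a=-d_{\mathrm{non}}a'$, the second gives $bq^0=(U+q)a'$, and comparing $q$-degree-$0$ parts $b=Ua'_0$ while higher $q$-parts of $(U+q)a'$ must vanish — an induction upward in $q$-degree forces $a'=0$, hence $a=0$, $b=0$, and then $c=0$. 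So $j$ is injective, and by the dimension count it is onto, proving \eqref{eq:reduce}.

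The main obstacle is the bookkeeping around the operator $U+q$: it is tempting but \emph{false} to say $U+q$ is invertible over $\Z_2[[q]]$, so every argument that might want such invertibility has to be replaced by a finite-dimensional count in each fixed total degree (using that only finitely many powers of $q$ occur there, exactly as the paper already notes after \eqref{eq:borel-complex}) together with the strict-degree-raising property of $U$ to run the inductions. Once that discipline is imposed, both the acyclicity of $Z^*$ via the explicit $K$ and the identification of the quotient via $j$ are routine.
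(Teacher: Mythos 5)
Your proof is correct and rests on the same underlying fact as the paper's, namely the nilpotency of $U$, but the two arguments package it differently. The paper isolates one key statement: because $U$ raises degree and is therefore nilpotent, the map $U+q: C^*_{\mathrm{non}}[[q]] \rightarrow C^*_{\mathrm{non}}[[q]]/C^*_{\mathrm{non}}q^0$ is an isomorphism, and all three assertions (injectivity of $d_{\mathrm{borel}}$ on $\{(a,0,0)\}$, acyclicity of $Z^*$, and bijectivity of \eqref{eq:reduce}) follow from it at once. You correctly notice that $U+q$ is \emph{not} invertible on $C^*_{\mathrm{non}}[[q]]$ itself and then spend effort routing around this; the clean resolution is exactly the statement above, which trades your explicit nullhomotopy $K$ and your fixed-degree dimension count for direct constructions. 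For instance, surjectivity of \eqref{eq:reduce} needs no counting: given $(x,y,z)\in C^*_{\mathrm{borel}}$, reduce modulo $Z^*$ by subtracting first $(x,0,0)$, then $d_{\mathrm{borel}}(a',0,0)$ where $a'$ is the unique preimage of $y$ under $U+q$ in the quotient, and finally a further $Z^*$-element clearing the first coordinate; what remains is $(0,bq^0,c)=j(b,c)$. One small correction to your write-up: the recursion $a'_{i-1}=-Ua'_i$ expresses a lower $q$-degree coefficient in terms of a higher one, so the induction showing $a'=0$ should run \emph{downward} in $q$-degree (or simply invoke nilpotency of $U$), not upward as you state in the injectivity check.
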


\begin{proof} Since $U$ counts gradient flow lines, it increases the value of our Morse function, hence is nilpotent (of course, one can see this also just by looking at the grading). As a consequence, the map
\begin{equation}
U+q: C^*_{\mathrm{non}}[[q]] \longrightarrow C^*_{\mathrm{non}}[[q]]/C^*_{\mathrm{non}}q^0
\end{equation}
is an isomorphism. This implies that the restriction of $d_{\mathrm{borel}}$ to the subspace $(a,0,0)$ is injective, hence that $Z^*$ is acyclic, and also the bijectivity of \eqref{eq:reduce}.
\end{proof}

We write $C^*_{\mathrm{equiv}} = C^*_{\mathrm{non}} \oplus C^{*-i_{\mathrm{anti}}}_{\mathrm{inv}}[[q]]$ and equip it with the differential and $\Z_2[[q]]$-module structure induced from \eqref{eq:reduce}. The degree $1$ endomorphism which generates the module structure has quite a simple form, namely
\begin{equation}
Q_{\mathrm{equiv}} = \begin{pmatrix} U & 0 \\ D_1 & q \end{pmatrix}.
\end{equation}
To write down the differential, we need some more notation. In general, given any free graded $\Z_2[[q]]$-module $A^*[[q]]$, one has a map $|_{q=0}: A^*[[q]] \rightarrow A^* \subset A^*[[q]]$ which forgets all terms of positive $q$-order, and an endomorphism $\partial_q: A^*[[q]] \rightarrow A^{*-1}[[q]]$ which kills the constant term and divides each higher order term by $q$. These are related by $q\partial_q = \mathrm{id} - |_{q=0}$. By $(\mathrm{id} + U \partial_q)^{-1}$ we denote the geometric series $\mathrm{id} + U \partial_q + U^2 \partial_q^2 + \cdots$, which terminates because $U$ is nilpotent. In this terminology, the outcome of a straightforward computation is that
\begin{equation}
d_{\mathrm{equiv}} = \begin{pmatrix} d_{\mathrm{non}} & |_{q=0} (\mathrm{id} + U \partial_q)^{-1} D_2 \\ 0 & d_{\mathrm{inv}} + D_1 (\mathrm{id} + U \partial_q)^{-1} D_2 \partial_q
\end{pmatrix},
\end{equation}
where $D_1,D_2,U$ have been extended to $\Z_2[[q]]$-linear homomorphisms between $C^*_{\mathrm{inv}}[[q]]$, $C^*_{\mathrm{non}}[[q]]$ (this kind of extension will be made without explicit mention from now on).

\subsection{The localization map\label{sec:localisation}}
Each $\bar\scrM = \bar\scrM(G x_-,G x_+)^{\mathrm{non}}$ is a free $G$-quotient, hence carries an obvious double cover. Denote by $\xi(G x_-, G x_+) = \xi \rightarrow \bar\scrM$ the associated real line bundle. Following a well-known strategy, we will extract additional information from these moduli spaces by intersecting the zero-sets of sufficiently many sections of our line bundles. For technical reasons, we will use sections which are continuous overall, and smooth on each stratum (the interior as well as boundary strata). Obviously, one needs to check that the standard differential-topology arguments go through in this context. Namely, suppose that we are given (a finite or infinite number of) sections $\sigma^{(1)},\sigma^{(2)},\dots$ of this type, satisfying the following transversality condition for all $k$:
\begin{equation} \label{eq:s-cut-down}
\begin{minipage}{30em} \em
For each stratum $\scrS \subset \bar\scrM$, the restriction of $\sigma^{(k)}$ to $\scrS \cap (\sigma^{(1)})^{-1}(0) \cap \cdots \cap (\sigma^{(k-1)})^{-1}(0)$ is transverse to the zero-section.
\end{minipage}
\end{equation}
(this makes sense when seen as inductive in $k$, since the condition for $k-1$ will ensure that the intersection in \eqref{eq:s-cut-down} is smooth). We then write $\bar\scrM^{(k)}$ for the intersection of the zero-sets of all the $\sigma^{(i)}$, $1 \leq i \leq k$, and $\scrM^{(k)}$ for the intersection of this with the interior $\scrM$. Consider first the case when $k$ is the dimension of the space $\scrM$ itself. Since the boundary strata $\scrS$ are manifolds of smaller dimension, the transversality assumption ensures that $\bar\scrM^{(k)} \cap \scrS = \emptyset$, which means that $\scrM^{(k)} = \bar\scrM^{(k)}$ is a finite subset of the interior $\scrM$. We will then extract algebraic information as usual, by counting the number of these points modulo $2$ in that subset. Next, assume that $\scrM$ is of dimension $k+1$, and define
\begin{equation} \label{eq:fake-boundary}
\partial\bar\scrM^{(k)} = \bar\scrM^{(k)} \cap \partial\bar\scrM = \bar\scrM^{(k)} \setminus \scrM^{(k)}.
\end{equation}
In spite of the notation, it is by no means clear that this is a boundary in the standard manifold sense. However, \eqref{eq:s-cut-down} at least ensures that \eqref{eq:fake-boundary} consists of finitely many points, which all lie inside the top (codimension 1) boundary strata. Consider the local
picture near such a point, using a local chart $\R^{\geq 0} \times \R^k \rightarrow \bar\scrM$ provided by a suitable gluing map, and a local trivialization of $\xi$ over it. In this chart, the $\sigma^{(i)}$, $1 \leq i \leq k$, together give a map
\begin{equation}
s = (\sigma^{(1)},\dots,\sigma^{(k)}): \R^{\geq 0} \times \R^k \longrightarrow \R^k, \quad s(0) = 0.
\end{equation}
By construction, $s$ is continuous, and smooth on both strata $\{0\} \times \R^k$ and $\R^{>0} \times \R^k$, with regular zero-sets in each. In particular, the restriction of $s$ to any sufficiently small sphere $\{0\} \times S^{k-1}_\epsilon$ is a map $S^{k-1}_\epsilon \rightarrow \R^k \setminus \{0\}$ of degree $\pm 1$. If we then consider the hemisphere $S^k_\epsilon \cap (\R^{\geq 0} \times \R^k)$, the restriction of $s$ to that hemisphere must still have an odd number of zeros, counted algebraically (this only uses continuity of $s$ up to the boundary). By choosing $\epsilon$ generically, one can ensure that $s^{-1}(0)$ intersects the hemisphere transversally, hence the actual number of zeros is also odd. Now assume that we remove from $\bar\scrM^{(k)}$ its intersection with such hemispheres around each point of $\partial\bar\scrM^{(k)}$. The outcome is a compact one-manifold with boundary, which by the local considerations above has an odd number of boundary points for each point of $\partial\bar\scrM^{(k)}$, proving that that space itself consists of an even number of points. In this slightly roundabout way, one arrives at the same result as in the more familiar case of sections extending smoothly to the compactification.

The moduli spaces $\bar\scrM$ and their line bundles have certain inductive relations to each other, and the sections must be chosen accordingly. To see what this means, take a point of $\partial\bar\scrM$. This has the form $G\bar{u} = \{\bar{u},\iota(\bar{u})\}$, where $\bar{u} = (u_1,\dots,u_k)$ is a broken trajectory with $k \geq 2$ pieces (the pieces are $u_j \in \scrM(x_{j-1},x_j)$ for some critical points $x_j$ such that $x_0 \in G x_-$, $x_k \in G x_+$). From our previous discussion of the compactification, we know that at least one of the pieces $u_j$ is not $\iota$-invariant. Suppose that $j \in \{1,\dots,k\}$ is the largest number with that property. Clearly, choosing one of the two elements in $G \bar{u}$ is equivalent to choosing one of the two $\{u_j,\iota(u_j)\}$. Hence, there is a canonical isomorphism
\begin{equation}
\label{eq:boundary-iso}
\xi_{G\bar{u}} \iso \xi(G x_{j-1},G x_j)_{G u_j}.
\end{equation}
Viewed more globally, the boundary stratum $\scrS \subset \partial \bar\scrM$ to which $G \bar{u}$ belongs has a natural projection map to $\scrM(G x_{j-1},G x_j)^{\mathrm{non}}$, which is such that $\xi|\scrS$ is canonically isomorphic to the pullback of $\xi(G x_{j-1},G x_j)$. In this form, the statement extends to the closure $\bar{\scrS}$, which projects to $\bar\scrM(G x_{j-1},G x_j)^{\mathrm{non}}$.

\begin{definition}
Let $\sigma = \{\sigma(G x_-,G x_+): \bar\scrM(G x_-,G x_+) \rightarrow \xi(G x_-,G x_+)\}$ be a family of sections, one for each pair $(G x_-,G x_+)$. We call $\sigma$ consistent if it is compatible with \eqref{eq:boundary-iso}, in the sense that $\sigma(G x_-,G x_+)_{G\bar{u}}$ equals $\sigma(G x_{j-1},G x_j)_{G u_j}$ under that isomorphism.
\end{definition}

The consistency condition prescribes the behaviour of $\sigma(G x_-,G x_+)$ on each stratum of the boundary, in terms of the sections over smaller-dimensional moduli spaces. One can easily check that the consistency conditions for various strata do not contradict each other, so that consistent families of sections can be constructed inductively. In fact, the freedom in the choice of such a family is quite large, so standard transversality theory applies. Specifically, let $\sigma^{(1)},\sigma^{(2)},\dots$ be an infinite sequence of consistent families of sections. We say that this sequence is regular if its restriction to any fixed moduli space satisfies \eqref{eq:s-cut-down} for all $k$. An easy transversality argument shows that regularity is a generic
property.

Suppose that we have chosen a regular consistent sequence. As before, we can then consider the cut-down moduli spaces $\scrM^{(k)} = \scrM(G x_-, G x_+)^{\mathrm{non},(k)}$ and their compactifications. Define
\begin{equation}
\begin{aligned}
& D_1^{(k)}: C^*_{\mathrm{non}} \longrightarrow C^{*-i_{\mathrm{anti}}+k+1}_{\mathrm{inv}}, \\
& D_1^{(k)}(G x_-) = \sum_{x_+} \# \scrM(Gx_-,x_+)^{\mathrm{non},(k)}\, x_+.
\end{aligned}
\end{equation}
Of course, only finitely many of these are nonzero. To simplify the notation, we arrange them into a power series whose ``variable'' is the operation $\partial_q$:
\begin{equation}
\begin{aligned}
& \DD_1: C^*_{\mathrm{non}}[[q]] \longrightarrow C^{*-i_{\mathrm{anti}}+1}_{\mathrm{inv}}[[q]], \\
& \DD_1 = D_1^{(0)} + D_1^{(1)} \partial_q + D_1^{(2)} \partial_q^2 + \cdots.
\end{aligned}
\end{equation}
In the same way, but this time using the case where both $x_-$ and $x_+$ are invariant, one defines operations $X^{(k)}: C^*_{\mathrm{inv}} \rightarrow C^{*+k+1}_{\mathrm{inv}}$ and packages them into a series $\XX: C^*_{\mathrm{inv}}[[q]] \rightarrow C^{*+1}_{\mathrm{inv}}[[q]]$. Note that while $D_1^{(0)} = D_1$ as defined previously, $X^{(0)}$ is new: it counts the pairs of non-invariant flow lines connecting invariant critical points, which do not contribute to the ordinary differential $d$.

\begin{remark}
One can legitimately wonder why there are no higher order maps $D_2^{(k)}$ associated to moduli spaces which go from invariant $x_-$ to non-invariant $x_+$. The reason lies in the notion of consistency adopted here: any broken flow line in $\bar\scrM(G x_-,G x_+)^{\mathrm{non}}$ has the property that its last piece is non-invariant. Hence, choosing a representative $x_+$ trivializes the line bundle $\xi(G x_-, G x_+)$ in a way which is compatible with \eqref{eq:boundary-iso}, which means that no nontrivial information can be obtained in this way.

Of course, one could reverse the conventions by choosing the smallest possible $j$ in \eqref{eq:boundary-iso}. That would lead to a different algebraic formalism, with trivial maps $D_1^{(k)}$ and nontrivial maps $D_2^{(k)}$. Ultimately, one expects the resulting localization map to be the same up to homotopy, but we have not checked that this is the case.
\end{remark}

\begin{lemma} \label{th:antislurp}
$d_{\mathrm{inv}} \XX + \XX d_{\mathrm{inv}} = \DD_1 D_2$.
\end{lemma}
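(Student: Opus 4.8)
The plan is to prove Lemma~\ref{th:antislurp} by the usual boundary-counting argument for cut-down moduli spaces, now applied to the one-dimensional spaces $\bar\scrM^{(k)} = \bar\scrM(x_-,x_+)^{\mathrm{non},(k)}$ with both $x_-$ and $x_+$ invariant and $k = i_M(x_+) - i_M(x_-) - 1$, so that $\dim \scrM(x_-,x_+)^{\mathrm{non}} = k+1$. By the discussion in Section~\ref{sec:localisation}, the ``fake boundary'' $\partial\bar\scrM^{(k)}$ consists of an even number of points. The main work is to identify these boundary points with the terms appearing in $d_{\mathrm{inv}}\XX + \XX d_{\mathrm{inv}}$ and $\DD_1 D_2$.

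First I would enumerate the codimension-one boundary strata of $\bar\scrM(x_-,x_+)^{\mathrm{non}}$: a broken trajectory $\bar u = (u_1,\dots,u_m)$ through critical points $x_- = x_0, x_1, \dots, x_m = x_+$, at least one piece of which is non-invariant. Since both endpoints are invariant, the relevant codimension-one strata break into two pieces, $\bar u = (u_1, u_2)$ through an intermediate critical point $y$. There are two cases. \emph{Case (a):} $y$ is invariant. Then, by the codimension-zero observation from Section~\ref{sec:invariant-morse}, one of $u_1, u_2$ must be the non-invariant piece (and it is unique among the two, else the whole broken line would be invariant). If $u_2$ is non-invariant and $u_1$ invariant, the stratum is a product $\scrM(x_-,y)^{\mathrm{inv}} \times \scrM(Gy, Gx_+)^{\mathrm{non}}$; the consistency condition makes $\sigma$ on this stratum pull back from the second factor, so cutting down and counting gives the $\XX d_{\mathrm{inv}}$ contribution (here I should double-check the indexing: $d_{\mathrm{inv}}$ on the first factor is cut down by nothing, $\XX$ on the second by all $k$ of the sections). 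If instead $u_1$ is non-invariant and $u_2$ invariant, one gets $\scrM(Gx_-,Gy)^{\mathrm{non}} \times \scrM(y,x_+)^{\mathrm{inv}}$ and hence the $d_{\mathrm{inv}}\XX$ contribution. \emph{Case (b):} $y$ is non-invariant. Then automatically both $u_1 \in \scrM(Gx_-, Gy)$ and $u_2 \in \scrM(Gy, Gx_+)$ (with their $G$-orbits), the first going from an invariant to a non-invariant point and the second vice versa; after passing to the free quotient this stratum is $\scrM(Gx_-,Gy)^{\mathrm{non}} \times \scrM(Gy, Gx_+)^{\mathrm{non}}$. The largest non-invariant piece is $u_2$, so consistency trivializes $\xi$ via the $u_2$-factor; cutting down the $u_1$-factor with whatever sections are left and the $u_2$-factor with the rest, and summing over all intermediate non-invariant $y$, produces exactly the matrix coefficient of $\DD_1 D_2$ — note $D_2 = D_2^{(0)}$ carries no cut-down, matching the earlier remark that there are no higher $D_2^{(k)}$, while the $u_1$-factor contributes the full $\DD_1$.

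The identity then follows: the total number of fake boundary points is even, and grouping them by the three cases above gives $d_{\mathrm{inv}}\XX + \XX d_{\mathrm{inv}} + \DD_1 D_2 = 0$ over $\Z_2$, i.e. the claim. I would present this first for a fixed pair of invariant critical points $x_\pm$ with $\dim \scrM(x_-,x_+)^{\mathrm{non}} = k+1$, reading off the coefficient of $x_+$ in each operator applied to $x_-$, and then assemble over all $x_\pm$ and all $k$ into the $\partial_q$-power-series form. The bookkeeping with $\partial_q$ is where one must be slightly careful: a broken line with first factor cut down $a$ times and second cut down $b$ times, $a + b = k$, contributes to the $\partial_q^{?}$-coefficient in a way dictated by the definitions of $\DD_1$ and $\XX$ as series in $\partial_q$, and one needs the combinatorics of which sections restrict to which factor (governed by consistency: the sections $\sigma^{(1)},\dots$ restricted to the $u_2$-stratum are, by consistency, the sections for the smaller moduli space $\bar\scrM(Gy,Gx_+)^{\mathrm{non}}$, in order) to line up.

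The main obstacle will be the last point: verifying that the consistency convention distributes the cut-down sections across the two factors of a broken stratum in precisely the pattern encoded by the formal identity $\DD_1 D_2 = \DD_1 \circ D_2$ (with $D_2$ having no $\partial_q$) and by $\XX d_{\mathrm{inv}}$, $d_{\mathrm{inv}}\XX$ (with $d_{\mathrm{inv}}$ having no $\partial_q$) — in other words, that no ``mixing'' term $D_1^{(a)} (\text{something}) D_2^{(b)}$ with $b > 0$ can arise. This is exactly the content of the preceding remark (the last piece of any broken line in $\bar\scrM(Gx_-,Gx_+)^{\mathrm{non}}$ is non-invariant, so $\xi$ is trivialized through it and $D_2$ acquires no higher corrections), so the resolution is to invoke that remark carefully rather than to reprove it; but threading it through the $\partial_q$-bookkeeping is the step requiring genuine attention. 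Everything else — the degree count, the evenness of $\partial\bar\scrM^{(k)}$, the product structure of codimension-one strata, the case analysis on invariance of the breaking point — is routine given Section~\ref{sec:localisation} and the structure results of Section~\ref{sec:invariant-morse}.
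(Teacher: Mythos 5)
Your overall strategy---counting fake boundary points of the one-dimensional cut-down space $\bar\scrM(x_-,x_+)^{\mathrm{non},(k)}$ for invariant $x_\pm$ with $i_M(x_+)-i_M(x_-)=k+2$, and splitting the codimension-one strata according to whether the intermediate critical point is invariant---is the right framework, and the sub-cases you do consider match the terms $\XX d_{\mathrm{inv}}$, $d_{\mathrm{inv}}\XX$, and $\DD_1 D_2$. (In case (b) you have the two factors swapped at one point: the $u_1$-factor, going from an invariant to a non-invariant point, contributes $D_2$ with no cut-down, while $u_2$, which receives all the sections by consistency, contributes $\DD_1$; but this is a slip of labeling, not of substance.)

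The genuine gap is in case (a). You write that when the intermediate point $y$ is invariant, ``one of $u_1,u_2$ must be the non-invariant piece (and it is unique among the two, else the whole broken line would be invariant).'' The parenthetical only rules out the situation where \emph{neither} piece is non-invariant; it does nothing to exclude the situation where \emph{both} $u_1$ and $u_2$ are non-invariant, and that situation does occur. Concretely, with $y$ invariant and $i_M(y)=i_M(x_-)+1$, one can have $u_1\in\scrM(x_-,y)$ and $u_2\in\scrM(y,x_+)$ both lying outside the fixed locus, with $Gu_2\in\{\sigma^{(1)}=\cdots=\sigma^{(k)}=0\}$; these are legitimate fake-boundary points of $\bar\scrM(x_-,x_+)^{\mathrm{non},(k)}$. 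No term in the statement of the lemma corresponds to this stratum, so the proof must show its contribution vanishes, and this is precisely the one non-routine step the paper's proof spells out: by consistency all the sections $\sigma^{(i)}$ restricted to this stratum factor through the $u_2$ piece, so replacing $u_1$ by $\iota(u_1)$ preserves membership in the cut-down locus, and the boundary points come in pairs $G(u_1,u_2)$, $G(\iota(u_1),u_2)$ whose contributions cancel mod $2$. Without inserting this sub-case and its pairing-up cancellation, your boundary count is incomplete and the identity does not follow from what you have written.
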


In components, this means that $d_{\mathrm{inv}} X^{(k)} + X^{(k)} d_{\mathrm{inv}} = D_1^{(k)} D_2$ for each $k$. The proof is standard: one looks at those $x_\pm$ such that $i_M(x_+) - i_M(x_-) = k+2$, which is where the space $\scrM(x_-,x_+)^{\mathrm{non},(k)}$ is one-dimensional, and counts its boundary points in the sense of \eqref{eq:fake-boundary}. The only case worth mentioning is that of broken trajectories of the form $(u_1,u_2) \in \bar\scrM(x_-,x_1) \times \bar\scrM(x_1,x_+)$, where $x_1$ is an invariant critical point with $i_M(x_1) = i_M(x_-) + 1$, and both $u_1,u_2$ are non-invariant trajectories, the $G$-orbit of the latter lying in the subspace $\{\sigma^{(1)} = \cdots = \sigma^{(k)} = 0\}$. Such a broken trajectory gives rise to a pair of boundary points in our moduli space, namely $G (u_1,u_2)$ and $G (\iota(u_1),u_2)$. The contributions of these two points cancel, and therefore our formula does not contain a term corresponding to this boundary stratum.

The situation where $x_+$ is $\iota$-invariant, but $x_-$ is not, can be exploited a little further. Suppose that $x_- \in P$. Then, as mentioned before, $\bar\scrM(x_-,x_+)$ projects isomorphically to the quotient $\bar\scrM = \bar\scrM(G x_-,x_+)^{\mathrm{non}}$. This provides a canonical nowhere zero section of the associated line bundle, which we denote by $\sigma^{\mathrm{pref}} = \sigma(G x_-,x_+)^{\mathrm{pref}}$ (these sections do not normally satisfy the consistency condition above). Define a subset of $\scrM = \scrM(G x_-,x_+)^{\mathrm{non}}$ by
\begin{equation} \label{eq:geq-space}
\scrM^{(k,\geq)} = \scrM(G x_-,x_+)^{\mathrm{non},(k,\geq)} = \{\sigma^{(1)} = \cdots = \sigma^{(k)} = 0, \; \sigma^{(k+1)}/\sigma^{\mathrm{pref}} \geq 0 \},
\end{equation}
and similarly for the compactification, $\bar\scrM^{(k,\geq)} \subset \bar\scrM$. In the case where $\dim\,\scrM = k$, there are no points in \eqref{eq:geq-space} where $\sigma^{(k+1)} = 0$, and also no points at infinity in the compactification. By counting points in these spaces, one gets new operations
\begin{equation}
S_1^{(k)} : C^*_{\mathrm{non}} \longrightarrow C^{*-i_{\mathrm{anti}}+k+1}_{\mathrm{inv}},
\end{equation}
which we again write as a single power series $\SS_1$. As an example, suppose that $\scrM(G x_-,x_+)^{\mathrm{non}}$ is zero-dimensional, so that $k = 0$. Then, since $\sigma^{(1)}$ is nowhere zero, it picks out a representative for each orbit $G u$ in the moduli space. $S_1^{(0)}$ counts only those $G u$ for which $\sigma^{(1)}/\sigma^{\mathrm{pref}} > 0$, which more concretely means that the representative selected by $\sigma^{(1)}$ starts at the preferred point $x_-$. The other relevant situation is when $\dim\,\scrM = k+1$. In that case, \eqref{eq:geq-space} is a smooth one-manifold whose boundary is $\scrM^{(k+1)}$, and compactifying adds finitely many points at infinity, which can be treated in the same way as \eqref{eq:fake-boundary}. In analogy with the previous Lemma, this leads to the following:

\begin{lemma} \label{th:slaver}
$d_{\mathrm{inv}}\SS_1 + \SS_1 d_{\mathrm{non}} = \XX D_1 + \DD_1 U + (\DD_1-D_1)/\partial_q$, where the formal quotient in the last term stands for $(\DD_1-D_1)/\partial_q = D_1^{(1)} + D_1^{(2)}\partial_q + D_1^{(3)} \partial_q^2 + \cdots$.
\end{lemma}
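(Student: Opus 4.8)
The strategy is the same dimension-counting argument as in Lemma \ref{th:antislurp}, but applied to the one-manifold $\bar\scrM^{(k,\geq)} = \bar\scrM(G x_-,x_+)^{\mathrm{non},(k,\geq)}$ in the case where $x_- \in P$ is non-invariant, $x_+$ is invariant, and $i_M(x_+) - i_M(x_-) - i_{\mathrm{anti}} = k+1$, so that $\scrM = \scrM(G x_-,x_+)^{\mathrm{non}}$ has dimension $k+1$. First I would observe that $\scrM^{(k,\geq)}$ is a smooth one-manifold with boundary; its genuine boundary (in the interior of the moduli space) consists exactly of the points where $\sigma^{(k+1)}$ vanishes, i.e.\ $\scrM^{(k+1)}$, which contributes the term $S_1^{(k+1)}$-worth of points — but since we are summing over all $k$ into the series $\SS_1$, these ``$\sigma^{(k+1)}=0$'' contributions are precisely what is being differentiated, and careful bookkeeping of how they enter at the next order is what produces the term $(\DD_1 - D_1)/\partial_q$. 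I would set up the power-series/$\partial_q$ formalism first, exactly as in the paragraph preceding the lemma, so that ``counting $\scrM^{(k+1)}$ and reindexing'' becomes the stated formal quotient.

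The main content is then the enumeration of the points of $\bar\scrM^{(k,\geq)} \setminus \scrM^{(k,\geq)}$, i.e.\ the broken trajectories lying in the compactification, combined with the $\sigma^{(k+1)}=0$ boundary just discussed. A broken trajectory $(u_1,\dots,u_m)$ from $G x_-$ to $x_+$ has at least one non-invariant piece, and by the consistency convention the \emph{last} non-invariant piece $u_j$ controls the line bundle $\xi$ near that stratum. I would split into cases according to where the break occurs and whether $u_j$ is the last piece:
\begin{itemize}
\item the intermediate critical point $x_1$ is non-invariant, with the first piece $u_1 \in \scrM(x_-,x_1)$ and the remainder a cut-down flow line in $\scrM(G x_1, x_+)^{\mathrm{non}}$; because $\sigma^{(1)}$ on the first factor plays the role of $\sigma^{\mathrm{pref}}$, the condition $\sigma^{(k+1)}/\sigma^{\mathrm{pref}} \geq 0$ degenerates on this stratum and one reads off, depending on the sign, either the composite $\DD_1 U$ or $\DD_1 U$-complementary contributions — more precisely this stratum produces $\DD_1 U$ together with the ``$U$ picks out preferred/non-preferred'' dichotomy that the definition of $S_1^{(0)}$ already encoded;
\item the intermediate point $x_1$ is invariant, with first piece $u_1$ non-invariant and second piece an invariant flow line in $\scrM(x_1,x_+) \subset M^{\mathrm{inv}}$: this contributes $d_{\mathrm{inv}}\SS_1$;
\item the break with $u_1$ invariant (flow line in $M^{\mathrm{inv}}$) and the rest a cut-down non-invariant flow line: this contributes $\SS_1 d_{\mathrm{non}}$;
\item breaks where the relevant last-non-invariant piece, after choosing a representative, forces $\sigma^{(k+1)}$ to be compared against $\sigma^{\mathrm{pref}}$ pulled back from $\scrM(G x_-,x_1)^{\mathrm{non}}$ with $x_1$ invariant: this is where the new operation $\XX$ enters, giving $\XX D_1$.
\end{itemize}
Adding all boundary contributions to zero (mod $2$) and moving the $\scrM^{(k+1)}$ term to the right gives the asserted identity.

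The hard part will be the careful handling of the half-space condition $\sigma^{(k+1)}/\sigma^{\mathrm{pref}} \geq 0$ at the boundary strata: one must check that $\sigma^{\mathrm{pref}}$ extends to a nowhere-zero section over the closure of the relevant strata (true because the last non-invariant piece is $u_1$ there, so the canonical trivialization persists), and one must correctly match the sign of $\sigma^{(k+1)}/\sigma^{\mathrm{pref}}$ under the gluing chart $\R^{\geq 0}\times\R^k \to \bar\scrM$ with the combinatorics of ``preferred vs.\ non-preferred endpoint'' that distinguishes $U$ from $d_{\mathrm{non}}$ on the two copies of $C^*_{\mathrm{non}}$. This is exactly the same asymmetry visible in \eqref{eq:d-morse} and \eqref{eq:borel}, and the bookkeeping needs to be done stratum by stratum, using the local degree argument from the discussion around \eqref{eq:fake-boundary} to convert the one-manifold-with-boundary count into the mod $2$ identity. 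I expect no essentially new analytic input beyond what was already set up for Lemmas \ref{th:acyclic} and \ref{th:antislurp}; the difficulty is purely in organizing the case analysis so that the terms group into the four operators on the right-hand side.
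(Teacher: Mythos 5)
Your overall strategy matches the paper's: count boundary points of the one--manifold $\bar\scrM(G x_-,x_+)^{\mathrm{non},(k,\geq)}$, where $x_- \in P$ is non-invariant, $x_+$ is invariant, using the local degree argument from \eqref{eq:fake-boundary}. But three of the specifics are wrong, and one of them kills the crucial step.

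First, a bookkeeping slip: the dimension of $\scrM(G x_-,x_+)^{\mathrm{non}}$ is $i_M(x_+) - i_M(x_-) - 1$; there is no $i_{\mathrm{anti}}$ in this formula ($i_{\mathrm{anti}}$ only enters the \emph{grading} of $C^*_{\mathrm{inv}}$, not the Fredholm dimension). The relevant case is $i_M(x_+) - i_M(x_-) = k+2$. Similarly, the interior boundary $\scrM^{(k+1)}$ contributes $D_1^{(k+1)}$, not ``$S_1^{(k+1)}$-worth of points''.

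More seriously, your third bullet --- ``the break with $u_1$ invariant (flow line in $M^{\mathrm{inv}}$) and the rest a cut-down non-invariant flow line: this contributes $\SS_1 d_{\mathrm{non}}$'' --- is a non-existent stratum: $x_-$ is non-invariant, so no flow line emanating from it can lie in $M^{\mathrm{inv}}$, hence $u_1$ can never be invariant. The term $\SS_1 d_{\mathrm{non}}$ does not come from a separate boundary stratum at all. It arises \emph{entangled with} $\DD_1 U$ at the stratum where the intermediate critical point $x_1$ is non-invariant, and unscrambling them is exactly the subtle point of the lemma. What happens is this: with $\bar u = (u_1, u_2)$, $u_1: x_- \to x_1$ isolated and $G u_2 \in \scrM(G x_1,x_+)^{\mathrm{non},(k)}$, the section $\sigma^{\mathrm{pref}}$ of $\xi(Gx_-,x_+)$ restricts on this stratum to $\pm\sigma^{\mathrm{pref}}(G x_1,x_+)$, with the sign $+$ precisely when $x_1 \in P$. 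Hence if $x_1 \in P$ the boundary condition is $\sigma^{(k+1)}/\sigma^{\mathrm{pref}}(Gx_1,x_+) > 0$ and the $u_1$ are the preferred-to-preferred flow lines, giving $S_1^{(k)}(d_{\mathrm{non}} - U)$; if $x_1 \notin P$ the boundary condition flips to $<0$ and the $u_1$ are the preferred-to-non-preferred ones, giving $(D_1^{(k)} - S_1^{(k)})\,U$. Summing mod $2$ yields $S_1^{(k)} d_{\mathrm{non}} + D_1^{(k)} U$. Your first bullet gestures at this dichotomy but neither identifies the sign flip of $\sigma^{\mathrm{pref}}$ at the stratum nor the pairing with $P$-membership of $x_1$, and with the bogus third bullet in place the contributions do not add up. Your last bullet is also misdescribed: for $\XX D_1$, the intermediate point $x_1$ is invariant with $u_2$ non-invariant, so the last non-invariant piece is $u_2$, and $\xi$ is pulled back from $\xi(x_1,x_+)$, not from $\scrM(G x_-,x_1)^{\mathrm{non}}$; here the pair of boundary orbits $G(u_1,u_2)$, $G(u_1,\iota(u_2))$ split evenly across the half-space condition, so the half-space constraint becomes vacuous and the count is plain $X^{(k)} D_1$.
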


Again, this is best understood by writing out the equation in components,
\begin{equation}
d_{\mathrm{inv}} S_1^{(k)} + S_1^{(k)} d_{\mathrm{non}} = D_1^{(k)} U + X^{(k)} D_1 +
D_1^{(k+1)}.
\end{equation}
The most interesting terms are $D_1^{(k)} U$ and $S_1^{(k)} d_{\mathrm{non}}$, which appear together in the following way. Take $x_-,x_+$ such that $i_M(x_+) - i_M(x_-) = k+2$; we assume as before that the representative $x_-$ is chosen to lie in $P$. Take a pair $\bar{u} = (u_1,u_2) \in \scrM(x_-,x_1) \times \scrM(x_1,x_+)$, where $x_1$ is a non-invariant critical point, and $G u_2 \in \scrM(x_1,x_+)^{\mathrm{non},(k)}$. By the transversality assumptions, $\sigma^{(k+1)}(G u_2) \neq 0$. If $x_1 \in P$, then $G\bar{u}$ will lie on the boundary of $\bar\scrM(x_-,x_+)^{\mathrm{non},(k,\geq)}$ iff $\sigma^{(k+1)}(G u_2) > 0$. The total contribution of such points to our formula is $S_1^{(k)} (d_{\mathrm{non}} - U)$. In the other case $x_1 \notin P$, $G\bar{u}$ lies on the boundary iff $\sigma^{(k+1)}(G u_2) < 0$, and this contributes $(D_1^{(k)} - S_1^{(k)}) U$. Adding up the two terms yields the desired $D_1^{(k)} U + S_1^{(k)} d_{\mathrm{non}}$.

Consider the map
\begin{equation} \label{eq:lambda-map}
\begin{aligned}
& \lambda: (C^*_{\mathrm{equiv}},d_{\mathrm{equiv}}) \longrightarrow (C^{*-i_{\mathrm{anti}}}_{\mathrm{inv}}[[q]],d_{\mathrm{inv}}), \\
& \lambda(b,c) = c + \XX \partial_q(c) + \SS_1 (\mathrm{id} + U \partial_q)^{-1} D_2 \partial_q^2(c).
\end{aligned}
\end{equation}
This makes sense since $U$ is nilpotent, which together with our previous observations implies that the formula contains only finitely many powers of $\partial_q$. Direct computation using \eqref{eq:slurp} and Lemmas \ref{th:antislurp}, \ref{th:slaver} shows that $\lambda$ is a chain homomorphism. Combined with the canonical projection from $C_{\mathrm{borel}}^*$ to $C_{\mathrm{equiv}}^*$, it induces a homomorphism
\begin{equation} \label{eqn:localisationmap}
 \Lambda: H^*(C_{\mathrm{borel}},d_{\mathrm{borel}}) \longrightarrow
 H^{*-i_{\mathrm{anti}}}(C_{\mathrm{inv}},d_{\mathrm{inv}})[[q]].
\end{equation}
We call $\Lambda$ the bare localization map.

\begin{lemma} \label{th:rescale-map}
For sufficiently large $m$, the composition $\lambda
Q_{\mathrm{equiv}}^m$ is a map of $\Z_2[[q]]$-modules. This means that
\begin{equation}
q \lambda Q_{\mathrm{equiv}}^m = \lambda Q_{\mathrm{equiv}}^{m+1},
\end{equation}
and similarly for all formal power series in $q$.
\end{lemma}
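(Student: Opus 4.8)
The plan is to exploit the explicit formulas for $\lambda$ and $Q_{\mathrm{equiv}}$ together with the nilpotence of $U$. Recall that $Q_{\mathrm{equiv}} = \begin{pmatrix} U & 0 \\ D_1 & q \end{pmatrix}$ and that multiplication by $q$ on the target $C^{*-i_{\mathrm{anti}}}_{\mathrm{inv}}[[q]]$ is the obvious thing. The obstruction to $\lambda$ being $\Z_2[[q]]$-linear is entirely the appearance of $\partial_q$ in the formula \eqref{eq:lambda-map}: since $q\partial_q = \mathrm{id} - |_{q=0}$, the operator $\partial_q$ fails to commute with multiplication by $q$ only through the $|_{q=0}$ term, i.e. only on the constant ($q^0$) part. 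First I would compute $\lambda Q_{\mathrm{equiv}} - (\text{$q$-linear part})$ and observe that the discrepancy is supported on $C^*_{\mathrm{non}} \oplus C^{*-i_{\mathrm{anti}}}_{\mathrm{inv}}$ considered inside $C^*_{\mathrm{equiv}}$ with $q$-degree bounded by the nilpotence order of $U$ plus a fixed constant coming from the $\partial_q^2$ in the last term.

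The key point is then an induction on powers of $Q_{\mathrm{equiv}}$. Because $Q_{\mathrm{equiv}}$ has upper-left block $U$, which is nilpotent, say $U^N = 0$, and because the only other entries are $D_1$ (into the $c$-component) and multiplication by $q$, iterating $Q_{\mathrm{equiv}}$ eventually forces the $b$-component into the image of $U^N = 0$ \emph{or} multiplies it by a high power of $q$; more precisely, $Q_{\mathrm{equiv}}^m(b,c)$ has $b$-component $U^m b$, which vanishes for $m \geq N$. So for $m \geq N$ the composite $\lambda Q_{\mathrm{equiv}}^m$ only sees the $c$-component, on which $Q_{\mathrm{equiv}}^m$ acts as a polynomial in $q$ with entries built from $U, D_1$ and low-order terms. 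The remaining task is to show that applying $\lambda$ to such a $c$-component lands in a region where the offending $|_{q=0}$ truncation is never triggered: one wants $Q_{\mathrm{equiv}}^m(0,c)$ to have $c$-component divisible by $q^M$ for $M$ larger than the total number of $\partial_q$'s appearing in $\lambda$ (which is bounded by $N+2$). I would make this quantitative: choose $m$ large enough (e.g. $m \geq 2N+3$) that every monomial in $Q_{\mathrm{equiv}}^m$ either contains a factor $U^N=0$ or a factor $q^M$ with $M \geq N+3$, so that after applying $\partial_q$ at most $N+2$ times the constant term is still annihilated by the subsequent $|_{q=0}$ operation, i.e. $|_{q=0}$ sees zero and hence commutes harmlessly with $q$.

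Concretely I would argue as follows. Expand $Q_{\mathrm{equiv}}^m = \begin{pmatrix} U^m & 0 \\ \sum_{i+j=m-1} q^i D_1 U^j & q^m \end{pmatrix}$; for $m \geq N$ the $(1,1)$ and $(2,1)$-entries simplify since $U^j = 0$ for $j \geq N$, leaving $Q_{\mathrm{equiv}}^m(b,c) = \big(0,\; (\sum_{i=m-N}^{m-1} q^i D_1 U^{m-1-i}) b + q^m c\big)$. Feed this into $\lambda(0,\gamma) = \gamma + \XX\partial_q\gamma + \SS_1(\mathrm{id}+U\partial_q)^{-1}D_2\partial_q^2\gamma$: since $\gamma$ is divisible by $q^{m-N}$ and the total $\partial_q$-degree of $\lambda$ is at most $N+2$, the result is divisible by $q^{m-N-(N+2)} = q^{m-2N-2}$, and moreover no $|_{q=0}$ buried inside $(\mathrm{id}+U\partial_q)^{-1}$ or any $\partial_q$ ever hits a nonzero constant term provided $m - 2N - 2 \geq 0$. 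On that range $\partial_q$ commutes with multiplication by $q$ (as $q\partial_q\mu = \mu$ when $\mu$ has no constant term), so $\lambda Q_{\mathrm{equiv}}^m$ becomes literally $\Z_2[[q]]$-linear; comparing $q\,\lambda Q_{\mathrm{equiv}}^m(b,c)$ with $\lambda Q_{\mathrm{equiv}}^{m+1}(b,c) = \lambda Q_{\mathrm{equiv}}^m(Q_{\mathrm{equiv}}(b,c))$ and using $Q_{\mathrm{equiv}}(b,c) = (Ub, D_1 b + qc)$ together with the same divisibility bookkeeping yields the claimed identity $q\lambda Q_{\mathrm{equiv}}^m = \lambda Q_{\mathrm{equiv}}^{m+1}$, and hence commutation with all formal power series in $q$.

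\textbf{Main obstacle.} The routine part is the block computation of $Q_{\mathrm{equiv}}^m$; the delicate part is the bookkeeping that guarantees the $|_{q=0}$ operators hidden inside $(\mathrm{id}+U\partial_q)^{-1}$ (which expands to $\sum_k U^k\partial_q^k$) together with the explicit $\partial_q, \partial_q^2$ in $\lambda$ never collide with the low-order ($q^0,\dots,q^{N+1}$) part of the input once enough $Q_{\mathrm{equiv}}$'s have cleared it away. Pinning down the exact threshold $m$ and verifying that all these truncations act as zero on the shifted-up input — rather than merely being "small" — is where care is needed, but it is purely a matter of tracking $q$-degrees against $\partial_q$-degrees, both of which are bounded by the nilpotence order of $U$.
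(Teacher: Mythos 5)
Your proposal is correct and follows essentially the same route as the paper: nilpotency of $U$ forces $Q_{\mathrm{equiv}}^m$ to land in $\{0\} \oplus q^l C^*_{\mathrm{inv}}[[q]]$ for $m$ large, and $\lambda$ only contains finitely many powers of $\partial_q$, so it is $q$-linear once restricted to that subspace. One small slip: the claim that the total $\partial_q$-degree of $\lambda$ is bounded by $N+2$ is not right, since $\XX$ and $\SS_1$ are themselves power series in $\partial_q$ whose length is controlled by a degree argument (only finitely many $X^{(k)}$, $S_1^{(k)}$ are nonzero), not directly by the nilpotency order of $U$; but all that matters is that this degree is finite, so the explicit threshold $m \geq 2N+3$ needs adjusting while the argument itself is unharmed.
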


\begin{proof}
Since $\lambda$ only contains finitely many powers of $\partial_q$, there is an $l \gg 0$ such that $q \lambda(0,c) - \lambda (0,qc)$ vanishes for $c \in q^l C^*_{\mathrm{inv}}[[q]]$. Using again the nilpotency of $U$, one can choose $m$ in such a way that $Q_{\mathrm{equiv}}^m(C^*_{\mathrm{equiv}}) \subset \{0\} \oplus q^l C^*_{\mathrm{inv}}[[q]]$, and then obviously $q \lambda Q_{\mathrm{equiv}}^m = \lambda\,\diag(0,q)\,Q_{\mathrm{equiv}}^m = \lambda Q_{\mathrm{equiv}}^{m+1}$.
\end{proof}

For $m$ as above, define $\Lambda^{(m)}: H^*(C_{\mathrm{borel}},d_{\mathrm{borel}}) \rightarrow H^{*-i_{\mathrm{anti}}+m}(C_{\mathrm{inv}},d_{\mathrm{inv}})[[q]]$ to be the map induced by $\lambda Q_{\mathrm{equiv}}^m$. We call this the $m$-normalized localization map.

\begin{lemma} \label{th:localization}
After tensoring over $\Z_2[[q]]$ with $\Z_2((q)) = \Z_2[[q]][q^{-1}]$, $\Lambda^{(m)}$ becomes an isomorphism. Equivalently, the kernel of the original $\Lambda^{(m)}$ is the torsion part of $H^*(C_{\mathrm{borel}},d_{\mathrm{borel}})$ as a $\Z_2[[q]]$-module, and its cokernel is finite-dimensional.
\end{lemma}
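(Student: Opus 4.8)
The plan is to prove the statement by showing that $\lambda Q_{\mathrm{equiv}}^m$ induces an isomorphism after inverting $q$, via an explicit inverse constructed by tracking the torsion-free part of $H^*(C_{\mathrm{borel}}, d_{\mathrm{borel}})$. The starting observation is that, since $C^*_{\mathrm{non}}$ is finitely generated over $\Z_2$ and $U$ is nilpotent, the acyclicity Lemma \ref{th:acyclic} identifies $H^*(C_{\mathrm{borel}},d_{\mathrm{borel}})$ with $H^*(C^*_{\mathrm{equiv}},d_{\mathrm{equiv}})$. After tensoring with $\Z_2((q))$, the first summand $C^*_{\mathrm{non}}$ contributes nothing: the block $d_{\mathrm{non}}$ on $C^*_{\mathrm{non}}$ (with $q$ inverted, where $q$ acts invertibly via $Q_{\mathrm{equiv}}$) gives an acyclic complex, because the $q$-module structure on $C^*_{\mathrm{equiv}} \otimes \Z_2((q))$ restricted to $C^*_{\mathrm{non}}\otimes\Z_2((q))$ is generated by the nilpotent $U$ together with the off-diagonal $D_1$, and after inverting $q$ one can change basis so that $C^*_{\mathrm{non}}\otimes\Z_2((q))$ splits off as an acyclic piece. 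Concretely: $Q_{\mathrm{equiv}} = \left(\begin{smallmatrix} U & 0 \\ D_1 & q\end{smallmatrix}\right)$ is invertible over $\Z_2((q))$ iff $U$ is — which it is not — so instead one argues that the $C^*_{\mathrm{non}}$-part of $H^*(C^*_{\mathrm{equiv}})$ is annihilated by a power of $q$, i.e. is torsion, hence dies after $\otimes \Z_2((q))$.

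The main computation is then to show that on the remaining summand $C^{*-i_{\mathrm{anti}}}_{\mathrm{inv}}[[q]]\otimes\Z_2((q))$ — with its induced differential $d_{\mathrm{inv}} + D_1(\mathrm{id}+U\partial_q)^{-1}D_2\partial_q$ — the map $\lambda Q_{\mathrm{equiv}}^m$ is, up to the invertible automorphism $q^m$ and a filtered (hence invertible) correction, the identity. First I would observe that $\lambda(b,c) = c + (\text{higher order in } \partial_q)$, so that on the associated graded with respect to the $\partial_q$-filtration (equivalently the $q$-adic filtration), $\lambda$ is the projection $(b,c)\mapsto c$ followed by $|_{q=0}$-type truncation. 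The powers of $\partial_q$ appearing in $\XX\partial_q$ and $\SS_1(\mathrm{id}+U\partial_q)^{-1}D_2\partial_q^2$ are all strictly positive, so $\lambda$ is a filtered quasi-isomorphism onto $(C^{*-i_{\mathrm{anti}}}_{\mathrm{inv}}[[q]], d_{\mathrm{inv}})$ provided the differential $D_1(\mathrm{id}+U\partial_q)^{-1}D_2\partial_q$ also raises filtration — which it does, since it carries a factor of $\partial_q$. Hence over $\Z_2((q))$ both source and target become $H^*(C_{\mathrm{inv}},d_{\mathrm{inv}})\otimes\Z_2((q))$ and $\lambda$ is an iso; composing with the invertible $Q_{\mathrm{equiv}}^m$ (invertible over $\Z_2((q))$ precisely because $Q_{\mathrm{equiv}}$ differs from $\mathrm{diag}(U,q)$ by lower-triangular terms, and one inverts the $q$ while the nilpotent $U$-block is handled by the geometric series) keeps it an iso. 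The equivalent reformulation — kernel is the torsion submodule, cokernel finite-dimensional — then follows from the structure theorem for finitely generated $\Z_2[[q]]$-modules, since $H^*(C_{\mathrm{borel}})$ is finitely generated (the free part maps isomorphically, the torsion part maps to zero after $\otimes\Z_2((q))$, and a finitely generated torsion $\Z_2[[q]]$-module is finite-dimensional over $\Z_2$).

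The hard part will be making rigorous the claim that the $C^*_{\mathrm{non}}$-contribution is torsion, i.e. that $H^*(C^*_{\mathrm{equiv}},d_{\mathrm{equiv}})$ becomes concentrated on the $C_{\mathrm{inv}}$-summand after inverting $q$. The issue is that $d_{\mathrm{equiv}}$ is genuinely upper-triangular with the nontrivial off-diagonal term $|_{q=0}(\mathrm{id}+U\partial_q)^{-1}D_2$, so the two summands do not split as complexes even after inverting $q$; one must instead run a spectral-sequence or change-of-basis argument exploiting that $|_{q=0}$ followed by $\partial_q$-heavy operators behaves well $q$-adically. I expect to handle this by a direct filtration argument: filter $C^*_{\mathrm{equiv}}$ by powers of $q$, note the associated graded splits, compute that the $C^*_{\mathrm{non}}$-part of the $E_\infty$-page is $q$-torsion because the column complex $(C^*_{\mathrm{non}}[[q]], d_{\mathrm{non}})$ has cohomology $H^*(C_{\mathrm{non}})[[q]]$ which — crucially — is cancelled against the other copy of $C^*_{\mathrm{non}}$ and the $U+q$ term in the original $d_{\mathrm{borel}}$ of \eqref{eq:borel} via Lemma \ref{th:acyclic} already, so that what survives in $C^*_{\mathrm{equiv}}$ from $C^*_{\mathrm{non}}$ is precisely the cohomology of $d_{\mathrm{non}}$ with no $q$'s, a finite-dimensional $\Z_2$-space, hence $q$-torsion. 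Everything else is bookkeeping with the nilpotency of $U$ and the structure theory of modules over the DVR $\Z_2[[q]]$.
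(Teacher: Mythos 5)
Your proposal has a genuine gap, and it stems from using the wrong filtration. You try to argue via the $q$-adic filtration (what you call the ``$\partial_q$-filtration''), claiming that the correction terms in $\lambda$ and the off-diagonal term $D_1(\mathrm{id}+U\partial_q)^{-1}D_2\partial_q$ in $d_{\mathrm{equiv}}$ ``raise filtration'' because they carry factors of $\partial_q$. This is backwards: $\partial_q$ divides by $q$, hence \emph{lowers} the $q$-adic degree by one. In particular $d_{\mathrm{equiv}}$ is not compatible with the decreasing $q$-adic filtration on $C^*_{\mathrm{equiv}}$ (unlike $d_{\mathrm{borel}}$ on $C^*_{\mathrm{borel}}$, which only involves $q^0$ and $q^1$), so the spectral-sequence argument you sketch in the last paragraph cannot even get started on $C^*_{\mathrm{equiv}}$. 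Related to this, you waver on whether $Q_{\mathrm{equiv}}^m$ is invertible over $\Z_2((q))$: it is not, at the chain level, since $U$ is nilpotent rather than unipotent; what is true is only that the induced map on cohomology is multiplication by $q^m$, which is another way of saying one must argue on cohomology and cannot invert on the nose.

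The paper's proof uses a different, and much more effective, filtration: the \emph{action} filtration by the value of the Morse function at the critical points. Every Morse-theoretic operator appearing ($d_{\mathrm{inv}}$, $d_{\mathrm{non}}$, $D_1$, $D_2$, $U$, the $X^{(k)}$ and $S_1^{(k)}$) counts gradient flow lines and hence strictly increases this filtration, while $q$ and $\partial_q$ preserve it. With respect to it, $\lambda Q_{\mathrm{equiv}}^m(b,c) = q^m c + (\text{strictly filtration-increasing terms})$, so as a $\Z_2[[q]]$-linear chain map its matrix (in the finite action-ordered basis of generators) is $q^m\cdot\mathrm{id}$ plus a strictly triangular remainder; this has finite-dimensional kernel and cokernel over $\Z_2$. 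Passing through the long exact sequences for $0 \to \ker \to C^*_{\mathrm{equiv}} \to \mathrm{im} \to 0$ and $0 \to \mathrm{im} \to C^{*-i_{\mathrm{anti}}}_{\mathrm{inv}}[[q]] \to \mathrm{coker} \to 0$, the induced map on cohomology also has finite-dimensional kernel and cokernel, and a finite-dimensional $\Z_2[[q]]$-module is $q$-torsion, so the map becomes an isomorphism after tensoring with $\Z_2((q))$. If you want to salvage your argument, the fix is to replace your $q$-adic filtration throughout by the action filtration; in fact once you do that, the two-step decomposition (kill the $C^*_{\mathrm{non}}$ summand, then compare on $C^*_{\mathrm{inv}}[[q]]$) becomes unnecessary, and the whole proof collapses to the short argument above.
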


\begin{proof}
All nontrivial terms in our definitions involve counting gradient flow lines, hence are strictly increasing with respect to the filtration by values of our Morse function. In particular
\begin{equation}
\lambda Q_{\mathrm{equiv}}^m(b,c) = (0,q^m c) + \text{terms which increase the filtration}.
\end{equation}
Hence, this chain map has finite-dimensional kernel and cokernel, and then the same applies to the induced map on cohomology.
\end{proof}

\begin{remark} \label{rem:cocycle}
We want to outline a possible technical alternative. This avoids the choice of sections and the transversality issues involved in cutting down moduli spaces, but it assumes the existence of fundamental chains $[\bar\scrM]$ (in the sense of cubical singular homology), which moreover should be compatible with the product structure of the boundary strata. The construction of such chains in Floer theory is part of the formalism of \cite{FO3}; the Morse case is similar but simpler.

Instead of the line bundle $\xi \rightarrow \bar\scrM = \bar\scrM(G x_-,G x_+)^{\mathrm{non}}$, consider the underlying double cover $S(\xi) = \bar\scrM(G x_-, G x_+) \setminus \bar\scrM(G x_-, G x_+)^{\mathrm{inv}}$. One can define cocycle representatives for the first Stiefel-Whitney class $w_1(\xi)$ as follows. Pick a singular zero-cocycle $v$ on $S(\xi)$, which is simply a function $S(\xi) \rightarrow \Z_2$, such that the fibrewise involution takes $v$ to $1-v$. Its coboundary $dv$ is necessarily invariant under the involution, which means that it is the pullback of a one-cocycle $w$ on $\bar\scrM$. Explicitly, to determine the value of $w$ on a chain $[0,1] \rightarrow \bar\scrM$, one lifts that chain to $S(\xi)$ arbitrarily, and then takes the difference of the values of $v$ at the endpoints, which is independent of the choice of lift. In particular, by looking at closed loops, one sees that $w$ measures their liftability, hence represents $w_1(\xi)$. We assume that cocycles $v$ have been chosen on all of our moduli spaces, compatibly with \eqref{eq:boundary-iso}, and take the resulting cocycles $w$. Then, $D_1^{(k)}$ and $X^{(k)}$ can be defined by evaluating the $k$-th power $w^k$ on the fundamental chain $[\bar\scrM]$.

Consider now the case where $x_-$ is $\iota$-invariant, but $x_+$ is not. In that case, the subspace $\bar\scrM(x_-,x_+) \subset S(\xi)$ provides a canonical section $s$ of $S(\xi)$. To define $S^{(k)}$, we evaluate $s^*(v)w^k$ on the fundamental chain. By construction $d(s^*(v)w^k) = w^{k+1}$, which is the mechanism leading to Lemma \ref{th:slaver} in this framework.
\end{remark}

\subsection{The Smith inequality\label{sec:smith-inequality}}
The decreasing filtration of $C^*_{\mathrm{borel}}$ by powers of $q$ yields a spectral sequence, whose starting page consists of infinitely many copies of ordinary Morse cohomology:
\begin{equation} \label{eq:borel-ss}
 E_1^{pq} = \begin{cases}
 H^q(C,d) & p \geq 0, \\
 0 & p < 0.
 \end{cases}
\end{equation}
As we have seen above, the first differential, in the nontrivial case $p \geq 0$, is $\mathrm{id} - \iota^*: E_1^{pq} \rightarrow E_1^{p+1,q}$. In a similar but technically simpler vein, multiplication by $q$ defines a universal coefficient sequence
\begin{equation} \label{eq:uct}
\cdots H^{*-1}(C_{\mathrm{borel}},d_{\mathrm{borel}}) \xrightarrow{q} H^*(C_{\mathrm{borel}},d_{\mathrm{borel}}) \rightarrow H^*(C,d) \rightarrow \cdots
\end{equation}
Using either approach, one sees that the rank of $H^*(C_{\mathrm{borel}},d_{\mathrm{borel}})$ as a $\Z_2[[q]]$-module is bounded above by the dimension of $H^*(C,d)$. More precisely, from \eqref{eq:uct} one gets the following statement:

\begin{lemma}
Let $r_{\mathrm{free}}$ be the number of generators of the torsion-free part of the $\Z_2[[q]]$-module $H^*(C_{\mathrm{borel}},d_{\mathrm{borel}})$, and $r_{\mathrm{tor}}$ the same for the torsion part.
Then
\begin{equation}
r_{\mathrm{free}} + 2 r_{\mathrm{tor}} = \mathrm{dim}\, H^*(C,d).
\end{equation}
\end{lemma}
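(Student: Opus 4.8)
The plan is to extract the statement directly from the universal coefficient sequence \eqref{eq:uct}, which is the long exact sequence relating $H^*(C_{\mathrm{borel}},d_{\mathrm{borel}})$, multiplication by $q$, and $H^*(C,d)$. Working over the discrete valuation ring $\Z_2[[q]]$, the module $H^*(C_{\mathrm{borel}},d_{\mathrm{borel}})$ is finitely generated in each degree (only finitely many powers of $q$ appear), so by the structure theorem it splits as a direct sum of $r_{\mathrm{free}}$ free summands $\Z_2[[q]]$ and $r_{\mathrm{tor}}$ torsion summands of the form $\Z_2[[q]]/q^{n_i}$ for various $n_i \geq 1$. The goal is to read off how each type of summand contributes to $\dim_{\Z_2} H^*(C,d)$.

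First I would make precise that \eqref{eq:uct} is the long exact sequence in cohomology associated to the short exact sequence of complexes $0 \to C^*_{\mathrm{borel}} \xrightarrow{q} C^*_{\mathrm{borel}} \to C^* \to 0$, where the quotient is $C^*_{\mathrm{borel}}/qC^*_{\mathrm{borel}} \iso C^*$ with differential $d$ (the $q$-linear term $(\iota_{\mathrm{morse}}-\mathrm{id})q$ dies mod $q$). Exactness of $\cdots \to H^{*-1}(C_{\mathrm{borel}}) \xrightarrow{q} H^*(C_{\mathrm{borel}}) \to H^*(C,d) \to H^{*+1}(C_{\mathrm{borel}}) \xrightarrow{q} \cdots$ then gives, degreewise, a short exact sequence
\begin{equation*}
0 \to \cok\big(q \colon H^{*-1}(C_{\mathrm{borel}}) \to H^*(C_{\mathrm{borel}})\big) \to H^*(C,d) \to \ker\big(q \colon H^{*}(C_{\mathrm{borel}}) \to H^{*+1}(C_{\mathrm{borel}})\big) \to 0.
\end{equation*}
Summing dimensions over all degrees, $\dim H^*(C,d)$ equals the total dimension of $H^*(C_{\mathrm{borel}})/qH^*(C_{\mathrm{borel}})$ plus the total dimension of the $q$-torsion-at-level-one submodule $\{x : qx = 0\}$. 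Now I compute both for each summand type: a free summand $\Z_2[[q]]$ contributes $1$ to the first (namely $\Z_2[[q]]/q\Z_2[[q]] = \Z_2$) and $0$ to the second (it is torsion-free), so it contributes $1$ total; a torsion summand $\Z_2[[q]]/q^{n}$ with $n \geq 1$ contributes $1$ to the first ($\Z_2[[q]]/q^n$ mod $q$ is $\Z_2$) and $1$ to the second (the socle $q^{n-1}\Z_2[[q]]/q^n\Z_2[[q]]$ is one-dimensional), so it contributes $2$ total. Adding up over all $r_{\mathrm{free}}$ free and $r_{\mathrm{tor}}$ torsion summands yields $r_{\mathrm{free}} + 2r_{\mathrm{tor}} = \dim H^*(C,d)$.

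The only real subtlety — and the step I expect to need the most care — is the finiteness and degreewise-finite-generation input needed to invoke the structure theorem: one must check that $H^*(C_{\mathrm{borel}},d_{\mathrm{borel}})$ is a finitely generated $\Z_2[[q]]$-module (equivalently, that in each degree it is finitely generated, and that it is nonzero in only finitely many degrees). This follows because $C^*$ is finite-dimensional (finitely many critical points), so $C^*_{\mathrm{borel}} = C^*[[q]]$ is finitely generated over $\Z_2[[q]]$ and concentrated in a bounded range of degrees once one accounts for the $q$-grading convention; $\Z_2[[q]]$ is Noetherian, hence the cohomology of a bounded complex of finitely generated modules is again finitely generated. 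Alternatively, and perhaps more cleanly, one can bypass the structure theorem entirely and argue directly from the displayed short exact sequence together with Lemma \ref{th:localization} (which already identifies the kernel of $\Lambda^{(m)}$ with the torsion part and gives finite-dimensional cokernel), but the structure-theorem bookkeeping above is the most transparent route. Either way the computation is routine once finite generation is in hand.
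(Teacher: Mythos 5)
Your proposal is correct and fills in exactly the argument the paper intends; the paper itself offers no explicit proof beyond pointing at the universal coefficient sequence \eqref{eq:uct}, so there is no competing route to compare against. Your main computation — taking the degreewise short exact sequence $0 \to \cok(q) \to H^*(C,d) \to \ker(q) \to 0$ coming from the long exact sequence, summing dimensions, and tallying the contributions ($1$ for a free summand, $2$ for a cyclic torsion summand $\Z_2[[q]]/q^n$ with $n\geq1$) — is the right bookkeeping, and your use of the structure theorem over the DVR $\Z_2[[q]]$ is justified once finite generation is in hand.

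One small imprecision in the finiteness discussion: $C^*_{\mathrm{borel}} = C^*[[q]]$ is \emph{not} concentrated in a bounded range of degrees (it is nonzero in all sufficiently large degrees, since $q$ has degree $1$). What you actually need, and what is true, is simpler: $C^*_{\mathrm{borel}}$ is a free $\Z_2[[q]]$-module of finite rank $\dim_{\Z_2} C^*$, the differential $d_{\mathrm{borel}}$ is $\Z_2[[q]]$-linear, and $\Z_2[[q]]$ is Noetherian, so $\ker d_{\mathrm{borel}}$ and hence $H^*(C_{\mathrm{borel}},d_{\mathrm{borel}})$ are finitely generated as $\Z_2[[q]]$-modules. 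You should also note explicitly (as you implicitly use) that multiplication by $q$ is injective on $C^*[[q]]$, so that $0 \to C^{*-1}_{\mathrm{borel}} \xrightarrow{q} C^{*}_{\mathrm{borel}} \to C^* \to 0$ really is a short exact sequence of complexes. With those two tweaks the proof is complete.
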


On the other hand, by Lemma \ref{th:localization}, the rank of the torsion-free part is isomorphic to that of $H^*(C_{\mathrm{inv}},d_{\mathrm{inv}})[[q]]$. This implies the Smith inequality in the following form:

\begin{lemma} \label{lemma:minismith}
The total dimension of $H^*(C_{\mathrm{inv}},d_{\mathrm{inv}})$ is less or equal than
that of $H^*(C,d)$. Equality holds iff $H^*(C_{\mathrm{borel}},d_{\mathrm{borel}})$ is torsion-free as a $\Z_2[[q]]$-module, or equivalently if the spectral sequence \eqref{eq:borel-ss} degenerates at the $E_1$ term.
\end{lemma}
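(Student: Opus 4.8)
The plan is to combine the rank identity of the preceding lemma with Lemma~\ref{th:localization}. Tensoring the isomorphism of Lemma~\ref{th:localization} over $\Z_2[[q]]$ with $\Z_2((q))$ annihilates the torsion submodule of $H^*(C_{\mathrm{borel}},d_{\mathrm{borel}})$ and identifies $H^*(C_{\mathrm{inv}},d_{\mathrm{inv}})[[q]]\otimes\Z_2((q))$ with $H^*(C_{\mathrm{inv}},d_{\mathrm{inv}})\otimes_{\Z_2}\Z_2((q))$; comparing $\Z_2((q))$-dimensions gives $r_{\mathrm{free}} = \dim H^*(C_{\mathrm{inv}},d_{\mathrm{inv}})$. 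Substituting into $r_{\mathrm{free}} + 2r_{\mathrm{tor}} = \dim H^*(C,d)$ yields $\dim H^*(C_{\mathrm{inv}},d_{\mathrm{inv}}) + 2r_{\mathrm{tor}} = \dim H^*(C,d)$; since $r_{\mathrm{tor}}\geq 0$ this is the asserted inequality, with equality exactly when $r_{\mathrm{tor}} = 0$, that is, when $H^*(C_{\mathrm{borel}},d_{\mathrm{borel}})$ is torsion-free over the discrete valuation ring $\Z_2[[q]]$.

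For the equivalence with $E_1$-degeneration, I would first observe that multiplication by $q$ is an injective, $\Z_2[[q]]$-linear chain endomorphism of $C_{\mathrm{borel}}$ which carries the $p$-th step of the $q$-adic filtration isomorphically onto the $(p+1)$-st; hence it induces, on every page of \eqref{eq:borel-ss}, isomorphisms $E_r^{p,*}\to E_r^{p+1,*}$ for $p\geq 0$ that intertwine the differentials. Since the column $p=0$ never receives a differential, \eqref{eq:borel-ss} degenerates at $E_1$ if and only if every $d_r$ ($r\geq 1$) vanishes on that column, equivalently iff $E_\infty^{0,*} = E_1^{0,*} = H^*(C,d)$.

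It then remains to identify $E_\infty^{0,*}$. Because multiplication by $q$ is injective and commutes with $d_{\mathrm{borel}}$, the filtration induced on $H^*(C_{\mathrm{borel}},d_{\mathrm{borel}})$ by the $q$-adic filtration of the complex is again the $q$-adic one; and since that filtration is finite in each total degree the spectral sequence converges, so $E_\infty^{0,*} = H^*(C_{\mathrm{borel}},d_{\mathrm{borel}})/qH^*(C_{\mathrm{borel}},d_{\mathrm{borel}})$. By the long exact sequence \eqref{eq:uct}, this reduction is the image of the restriction map $H^*(C_{\mathrm{borel}},d_{\mathrm{borel}})\to H^*(C,d)$, which is all of $H^*(C,d)$ precisely when the connecting homomorphism $H^*(C,d)\to H^*(C_{\mathrm{borel}},d_{\mathrm{borel}})$ vanishes, i.e. when multiplication by $q$ is injective on $H^*(C_{\mathrm{borel}},d_{\mathrm{borel}})$, i.e. when the latter is torsion-free; this closes the circle of equivalences. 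The step I expect to require the most care is the spectral-sequence bookkeeping of the last two paragraphs: identifying the induced filtration on cohomology with the $q$-adic one, and checking that multiplication by $q$ genuinely makes all the columns — and their differentials — isomorphic, so that $E_1$-degeneration can be tested in the single column $p=0$.
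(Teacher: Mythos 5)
Your argument is correct, and its first paragraph is exactly the paper's reasoning: the lemma is stated without a separate proof, following from the preceding rank identity $r_{\mathrm{free}}+2r_{\mathrm{tor}}=\dim H^*(C,d)$ together with Lemma~\ref{th:localization} precisely as you describe. The equivalence with $E_1$-degeneration is merely asserted in the paper; your spectral-sequence bookkeeping — using multiplication by $q$ to reduce degeneration to the $p=0$ column, and then identifying $E_\infty^{0,*}$ with $H^*(C_{\mathrm{borel}})/qH^*(C_{\mathrm{borel}})$ via \eqref{eq:uct} — correctly supplies that omitted justification.
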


%\begin{remark}
%Each column $E_k^{p*}$ of the spectral sequence is in fact a
%$\Z_2[q]$-module, and the differentials respect that structure. One
%could recast the argument above in terms of ranks of
%$\Z_2[q]$-modules, and thereby avoid using the grading of homology
%groups (compare Remark \ref{th:alternative}).
%\end{remark}
%

It is possible to extract a little additional information, going beyond the Smith inequality, from the localisation map. We know that the cokernel of $\Lambda^{(m)}$ is a finitely generated torsion $\Z_2[[q]]$-module, hence can be written as
\begin{equation}
\mathrm{coker}(\Lambda^{(m)}) = \bigoplus_k (\Z_2[[q]]/q^{d_k+1}) a_k
\end{equation}
where $a_k$ are homogeneous generators, and $d_k \geq 0$. We define the cokernel polynomial to be
\begin{equation} \label{equation:cokernelpoly}
P_{\mathrm{coker}}(t) = t^{-m} \sum_k (-1)^{\mathrm{deg}(a_k)} t^{d_k} \in \Z[t,t^{-1}].
\end{equation}
This is independent of $m$ since, by construction, $\Lambda^{(m+1)} = q\Lambda^{(m)}$.

\begin{example}
Take $M = \R^n$ (this is noncompact, but that will be irrelevant since the gradient flow is still well-behaved), with $\iota(x) = -x$, and a function $f$ that satisfies $f(x) = -\|x\|^2$ near the origin and $f(x) = \|x\|^2$ outside a compact subset. In this case $i_{\mathrm{anti}} = n$, $H^*(C_{\mathrm{borel}},d_{\mathrm{borel}}) = H^*(C,d)[[q]] = \Z_2[[q]]$, and
\begin{equation}
\Lambda(q^k) = \begin{cases} 0 & k < n, \\ q^{k-n} & k \geq n.
\end{cases}
\end{equation}
This shows clearly why only the rescaled maps $\Lambda^{(m)}$ are well-behaved with respect to the $\Z_2[[q]]$-module structure.
\end{example}

\section{Floer cohomology}

\subsection{Basic setup\label{subsec:borel-floer}}
We will work in a limited framework which remains close to Morse theory, hence makes it unproblematic to carry over our previous constructions. Namely, let $(M,\omega,I)$ be an exact symplectic manifold with a compatible almost complex structure, which is convex at infinity. Exactness means that the symplectic form is the exterior derivative of a distinguished one-form $\theta$. Convexity at infinity says that there is an exhaustion (a sequence of relatively compact open subsets $U_1 \subset U_2 \subset \cdots$ whose union is $M$), such that if $S$ is a connected compact Riemann surface with nonempty boundary, and $u: S \rightarrow M$ an $I$-holomorphic map with $u(\partial S) \subset U_k$, then necessarily $u(S) \subset U_{k+1}$ (symplectic structures obtained from plurisubharmonic exhaustive functions on Stein manifolds satisfy these properties, and so do more general Liouville manifolds). We will consider pairs of Lagrangian submanifolds $(L_0,L_1)$ in $M$ satisfying a relative version of the same conditions. First of all, both $\theta|L_0$ and $\theta|L_1$ are exact one-forms. Secondly, $L_0 \cap L_1$ is compact. Finally, there is an exhaustion of $M$ such that any $I$-holomorphic map $u: S \rightarrow M$, for $S$ as before and with $u(\partial S) \subset L_0 \cup L_1 \cup U_k$, satisfies $u(S) \subset U_{k+1}$. Note that the last two conditions are automatically satisfied if $L_0$, $L_1$ are themselves compact, which is the case in most applications. We allow noncompact Lagrangian submanifolds since they will appear later as a technical tool, as well as in some examples.

The Floer cohomology $HF(L_0,L_1)$ is a vector space over $\Z_2$ (under suitable assumptions, this can be $\Z/m$ graded for some $m$, or even $\Z$ graded, but in this section we'll stick to the general ungraded context). To define it, choose a compactly supported time-dependent Hamiltonian $H \in \smooth_c([0,1] \times M,\R)$, whose associated Hamiltonian isotopy $(\phi^t)$ has the property that $\phi^1(L_0)$ intersects $L_1$ transversally. The (necessarily finitely many) intersection points provide generators for the Floer cochain complex $CF(L_0,L_1)$. For the differential, we choose a generic family $J$ of almost complex structures, which is a time-dependent and compactly supported perturbation of $I$. Given two intersection points $x_\pm$, we denote by $\scrM(x_-,x_+)$ the moduli space of Floer trajectories
\begin{equation} \label{eq:Floer}
\begin{aligned}
 & u: \R \times [0,1] \longrightarrow M, \\
 & u(s,0) \in L_0, \;\; u(s,1) \in L_1, \\
 & \partial_s u + J(t,u) \big(\partial_tu - X(t,u)\big) = 0, \\
 & \textstyle\lim_{s \rightarrow \pm\infty} u(s,t) = y_\pm(t),
\end{aligned}
\end{equation}
where $X$ denotes the Hamiltonian vector field of $H$, meaning that $\omega(X,\cdot) = dH$, and $y_\pm$ are flow lines of $X$ with $y_\pm(1) = x_\pm$. To further clarify the sign conventions used here, we remind the reader that \eqref{eq:Floer} can be viewed as the gradient flow equation for the action functional on path space $\scrP = \{y: [0,1] \rightarrow M, \; y(0) \in L_0, \; y(1) \in L_1\}$. If we write $\theta|L_k = dF_k$, then this is given by
\begin{equation}
A(y) = \Big(\int_0^1 y^*\theta + H_t(y(t))\, dt\Big) + F_0(y(0)) - F_1(y(1)).
\end{equation}

\subsection{The equivariant theory\label{Section:equivariant}}
We now add symmetry, in the form of an involution $\iota$ of $M$ preserving $\o$, $\theta$ and $I$. We also assume that both Lagrangian submanifolds are invariant under this symmetry. One then has an equivariant Floer cohomology $HF_{\mathrm{borel}}(L_0,L_1)$, which is a module over $\Z_2[[q]]$. To define it, we return to the Borel construction \eqref{eq:borel-constr}. Each fibre $M_{\mathrm{borel},z}$ contains canonical Lagrangian submanifolds $L_{0,z}$ and $L_{1,z}$. In addition, choose a time-dependent Hamiltonian and almost complex structure $(H_z,J_z)$, smoothly depending on $z$. When $z = z^{(k)}$ is a critical point, the $H^{(k)}$ should satisfy the transverse intersection condition required to make the associated Floer cochain space $CF(L_0,L_1)^{(k)}$ well-defined. Moreover, the family $(H_z,J_z)$ should be locally constant near each critical point. As in \eqref{eq:borel-complex} we then set
\begin{equation} \label{eq:borel-floer}
CF_{\mathrm{borel}}(L_0,L_1) = \prod_{k=0}^{\infty} CF(L_0,L_1)^{(k)}.
\end{equation}
The differential $d_{\mathrm{borel}}$ counts solutions of the coupled equations
\begin{equation} \label{eq:coupled}
\begin{aligned}
& v: \R \longrightarrow \R P^{\infty}, \\
& dv/ds = \nabla h(v), \\
& u: \R \times [0,1] \longrightarrow M_{\mathrm{borel}}, \\
& (\partial_s u)^{\mathrm{vert}} + J_{v(s)}(t,u) \Big((\partial_t u)^{\mathrm{vert}} - X_{v(s)}(t,u)\Big) = 0
\end{aligned}
\end{equation}
with asymptotics $(z^{(j)},x^{(j)}_-)$ and $(z^{(k)},x^{(k)}_+)$. This is analytically unproblematic, since \eqref{eq:coupled} is either Floer's equation (if $v$ is constant), or else a continuation map equation for $u$ parametrized by the finite-dimensional moduli space of gradient flow lines $v$. For generic choices of almost complex structures, the resulting moduli spaces $\scrM_{\mathrm{borel}}(x^{(j)}_-,x^{(k)}_+)$ are smooth and admit a compactification with the same properties as in the Morse-theoretic case. Note that in general, the energy bounds depend on $j$ and $k$, which means that the differential can contain infinitely many terms. Hence, the use of the direct product in \eqref{eq:borel-floer} is mandatory.
%By construction, $CF_{\mathrm{borel}}(L_0,L_1)$ admits a decreasing filtration by $k$, which is %complete and compatible with the differential. In parallel with \eqref{eq:borel-ss}, this yields a %spectral sequence
%\begin{equation} \label{eq:borel-floer-ss}
%HF(L_0,L_1)[[q]] \Longrightarrow HF_{\mathrm{borel}}(L_0,L_1),
%\end{equation}
%which is particularly useful in comparison arguments. For instance, if $HF(L_0,L_1)$ vanishes, then so %does its equivariant version. The same step appears when proving that equivariant Floer cohomology is %invariant under (exact, equivariant, compactly supported) Lagrangian isotopies.

It is possible to simplify the setup by requiring compatibility of our choices with the self-embedding $\tau: \R P^\infty \rightarrow \R P^\infty$. In that situation
\begin{equation}
\begin{aligned}
& CF_{\mathrm{borel}}(L_0,L_1) = CF(L_0,L_1)[[q]],
& d_{\mathrm{borel}} = d^{(0)} + d^{(1)}q + d^{(2)}q^2 + \cdots
\end{aligned}
\end{equation}
The leading term $d^{(0)}$ is the ordinary Floer differential associated to $(H,J) = (H^{(0)},J^{(0)})$. The next term $d^{(1)}$ measures the difference between two continuation maps. One is a continuation for a family connecting $(H,J)$ to its pullback $(\iota^*H,\iota^*J)$, and the other connects $(H,J)$ to itself. The latter can be chosen constant, in which case the situation is precisely analogous to \eqref{eq:d-1}.
%In particular, again imitating the finite-dimensional behaviour, we find that the first differential %in \eqref{eq:borel-floer-ss} is $\iota_* - \mathrm{id}$, where $\iota_*$ is the involution on %$HF(L_0,L_1)$ induced by $\iota$.

It was proved in \cite[Proposition 5.13]{khovanov-seidel} that for a generic invariant choice of $(H,J)$, all solutions of \eqref{eq:Floer} which are not entirely contained in $M^{\mathrm{inv}}$ will be regular. In particularly simple cases, this already enables one to get a better understanding of equivariant Floer cohomology. One such case is the following:

\begin{lemma}
If $L_0 \cap L_1 \cap M^{\mathrm{inv}} = \emptyset$, then $HF_{\mathrm{borel}}(L_0,L_1)$ is of finite rank over $\Z_2$.
\end{lemma}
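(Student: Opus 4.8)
The plan is to reduce the statement to the situation of Section~\ref{sec:invariant-morse} with empty fixed part, where the conclusion is immediate from Lemma~\ref{th:acyclic}. First, choose the Hamiltonian perturbation equivariantly and supported away from $M^{\mathrm{inv}}$. Since $L_0 \cap L_1$ is compact and disjoint from the closed invariant set $M^{\mathrm{inv}}$, one can pick a $G$-invariant open set $V \supset L_0 \cap L_1$ with $\overline{V} \cap M^{\mathrm{inv}} = \emptyset$ and an invariant $H \in \smooth_c([0,1]\times M)$ supported in $[0,1]\times V$ for which $\phi^1(L_0)$ is transverse to $L_1$ (a generic invariant $H$ with this support works, or one may symmetrize an arbitrary such $H$ by averaging over $G$ without enlarging its support). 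Because $H$ vanishes near $M^{\mathrm{inv}}$, the Hamiltonian flow fixes $M^{\mathrm{inv}}$ pointwise, so $\phi^1(L_0)\cap L_1 \cap M^{\mathrm{inv}} = L_0 \cap L_1 \cap M^{\mathrm{inv}} = \emptyset$. Hence no generator of $CF(L_0,L_1)$ is $\iota$-invariant, and the generators fall into free two-element orbits $\{x,\iota(x)\}$. In particular no Floer trajectory can be $\iota$-invariant, since its asymptotics would be invariant generators; so by \cite[Proposition 5.13]{khovanov-seidel} a generic invariant $J$ makes \emph{every} Floer trajectory regular. We are thus in a situation of full equivariant transversality --- the Floer-theoretic analogue of the hypotheses of Section~\ref{sec:invariant-morse} --- in which, moreover, the invariant part is empty.

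Next, pass to the $\tau$-compatible model of Section~\ref{Section:equivariant}, so that $CF_{\mathrm{borel}}(L_0,L_1) = CF(L_0,L_1)[[q]]$; arguing exactly as in the passage from Section~\ref{sec:equivariant-morse} to Section~\ref{sec:invariant-morse}, one gets $d_{\mathrm{borel}} = d + (\iota_\ast - \mathrm{id})\,q$ with all higher-order terms $d^{(k)}$, $k\geq 2$, made to vanish --- here because $\iota_\ast$ is the honest permutation of generators induced by $\iota$ and satisfies $\iota_\ast^2 = \mathrm{id}$ identically. Writing $CF(L_0,L_1) = C_{\mathrm{non}}\oplus C_{\mathrm{non}}$ in the basis $\{x : x\in P\}\cup\{x+\iota(x) : x\in P\}$, with $\iota_\ast$ and $d$ of the forms \eqref{eq:iota-morse}, \eqref{eq:d-morse} but with the invariant summand absent, the analogue of \eqref{eq:borel} reads $CF_{\mathrm{borel}} = C_{\mathrm{non}}[[q]]\oplus C_{\mathrm{non}}[[q]]$ with $d_{\mathrm{borel}} = \bigl(\begin{smallmatrix} d_{\mathrm{non}} & 0 \\ U+q & d_{\mathrm{non}}\end{smallmatrix}\bigr)$, where $U$ counts Floer trajectories among non-invariant generators and is nilpotent, the action functional being strictly monotone along trajectories.

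Finally, Lemma~\ref{th:acyclic} --- whose proof uses only the nilpotence of $U$ and so carries over verbatim --- shows that the subcomplex $Z^* \subset CF_{\mathrm{borel}}$ generated by the elements $(a,0)$, $a\in C_{\mathrm{non}}[[q]]$, together with their $d_{\mathrm{borel}}$-images is acyclic, and that the quotient complex is $(C_{\mathrm{non}},d_{\mathrm{non}})$. Hence $HF_{\mathrm{borel}}(L_0,L_1) \cong H^*(C_{\mathrm{non}},d_{\mathrm{non}})$, which is of finite rank over $\Z_2$; indeed $q$ acts on it nilpotently, so it is $\Z_2[[q]]$-torsion. Conceptually this just reflects the fact that $CF(L_0,L_1)$ is a complex of \emph{free} $\Z_2[G]$-modules, and $\Z_2[G]$ is self-injective, so its Borel construction retains only the information of the finite complex of $G$-coinvariants.

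The only step that is not essentially formal is the middle one: checking that the reduction of Section~\ref{sec:invariant-morse}, and in particular the vanishing of the higher $d^{(k)}$, really does transplant into Floer theory. This is the ``entirely parallel'' argument alluded to after \eqref{eq:borel-floer}, and its substantive inputs are precisely the two facts isolated above, namely \cite[Proposition 5.13]{khovanov-seidel} and the strict monotonicity of the action functional along Floer trajectories.
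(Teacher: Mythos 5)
Your proposal is correct and follows the paper's own approach: the paper's proof is a two-sentence sketch (``we can clearly choose $(H,J)$ to be invariant and regular. This allows us to copy the strategy from Lemma~\ref{th:acyclic}\dots''), and you have simply unpacked it carefully --- the invariant $H$ supported away from $M^{\mathrm{inv}}$, \cite[Proposition 5.13]{khovanov-seidel} for regularity of the (automatically non-invariant) trajectories, the $\tau$-compatible model with $d_{\mathrm{borel}} = d + (\iota_*-\mathrm{id})q$, and the $C_{\mathrm{non}}\oplus C_{\mathrm{non}}$ version of Lemma~\ref{th:acyclic} with empty invariant summand.
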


\begin{proof}
In this case, we can clearly choose $(H,J)$ to be invariant and regular. This allows us to copy the strategy from Lemma \ref{th:acyclic}, which means to find an acyclic subspace $Z \subset CF_{\mathrm{borel}}(L_0,L_1)$ such that $CF_{\mathrm{borel}}(L_0,L_1)/Z$ has exactly one generator for each orbit of intersection points.
\end{proof}

In general, equivariant transversality fails to hold, and not just for technical reasons. There are topological obstructions that have no parallel in finite-dimensional Morse theory, and which affect the overall picture substantially. This phenomenon had already been noticed in the context of monopole Floer homology \cite{KM}, which can be viewed as an $S^1$-equivariant theory.

\subsection{An example\label{subsec:moebius}}
Take $M = \R \times S^1 \times \R^2$ with coordinates $(p_1,q_1,p_2,q_2)$, where $q_1 \in S^1 = \R/\Z$, and the standard symplectic structure $\omega = dp_1 \wedge dq_1 + dp_2 \wedge dq_2$. This is invariant under the involution $\iota(p_1,q_1,p_2,q_2) = (p_1,q_1,-p_2,-q_2)$. One of our Lagrangian submanifolds will be the cylinder
\begin{equation}
L_0 = \{p_1 = p_2 = 0\}.
\end{equation}
The other is a Moebius band, constructed from two overlapping pieces. Fix some small $\delta>0$. Choose a function $\psi_+: (-\delta,1/2+\delta) \rightarrow \R$ such that $\psi_+(s) = -1$ for $s \leq \delta$; $\psi_+(s) = 1$ for $s \geq 1/2-\delta$; and $\psi_+'(s) > 0$ for $s \in (\delta,1/2-\delta)$. Similarly, take $\psi_-: (1/2-\delta,1+\delta) \rightarrow \R$ such that $\psi_-(s) = -1$ for $s \leq 1/2+\delta$; and $\psi_-(s) = 1$ for $s \geq 1-\delta$. Then
\begin{equation} \label{eq:l0}
\begin{aligned}
& L_1 = L_{1,+} \cup L_{1,-}, \\
& L_{1,+} = \{ q_1 \in (-\delta,1/2+\delta), \; p_1 = \psi'_+(q_1) q_2^2/2, \; p_2 = \psi_+(q_1) q_2 \}, \\
& L_{1,-} = \{ q_1 \in (1/2-\delta,1+\delta), \; p_1 = \psi'_-(q_1) p_2^2/2, \; q_2 = -\psi_-(q_1) p_2 \}.
\end{aligned}
\end{equation}
If we write $\omega = d\theta$ with $\theta = p_1 \, dq_1 + (p_2 \, dq_2 - q_2 \, dp_2)/2$, both $L_k$ are exact. Moreover, the associated Liouville flow $\rho_t(p_1,q_1,p_2,q_2) = (e^t p_1, q_1, e^{t/2} p_2, e^{t/2} q_2)$ preserves the $L_k$. Hence, if we choose a suitable almost complex structure $I$ (not the standard complex structure, but one which is compatible with the Liouville flow at infinity), maximum principle arguments (compare \cite[Lemma 5.5]{khovanov-seidel}, for instance) ensure that $\mathit{HF}^*(L_0,L_1)$ is well-defined. The same holds for their fixed parts. The following computation shows that the Smith inequality is violated in this case:

\begin{lemma}
$\mathit{HF}^*(L_0^{\mathrm{inv}},L_1^{\mathrm{inv}}) \iso \Z_2 \oplus \Z_2$, whereas $\mathit{HF}^*(L_0,L_1) = 0$.
\end{lemma}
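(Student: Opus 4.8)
The plan is to compute both Floer cohomology groups directly from the geometry, exploiting exactness to work in the unperturbed setting where intersection points are literally the generators.

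First I would analyze the fixed point set. We have $M^{\mathrm{inv}} = \{p_2 = q_2 = 0\} = \R \times S^1$ with coordinates $(p_1,q_1)$. Intersecting with $L_0$ gives $L_0^{\mathrm{inv}} = \{p_1 = 0\}$, the zero section circle. For $L_1^{\mathrm{inv}}$, I set $q_2 = p_2 = 0$ in \eqref{eq:l0}: on $L_{1,+}$ this forces $p_1 = \psi_+'(q_1)\cdot 0 = 0$ and $p_2 = 0$ automatically, so $L_{1,+}^{\mathrm{inv}}$ is the arc $\{q_1 \in (-\delta,1/2+\delta),\ p_1 = 0\}$; similarly $L_{1,-}^{\mathrm{inv}}$ is $\{q_1 \in (1/2-\delta,1+\delta),\ p_1 = 0\}$. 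Together these patch to $L_1^{\mathrm{inv}} = \{p_1 = 0\}$, the \emph{same} circle as $L_0^{\mathrm{inv}}$. So in $M^{\mathrm{inv}}$ the two Lagrangians coincide; after a small exact (Hamiltonian) perturbation the invariant Floer cohomology is that of a circle with itself, giving $\mathit{HF}^*(L_0^{\mathrm{inv}},L_1^{\mathrm{inv}}) \iso H^*(S^1;\Z_2) \iso \Z_2 \oplus \Z_2$. I would phrase this carefully using exactness so that one does not worry about holomorphic disks: the complex is the Morse complex of a function on $S^1$, whose cohomology is $\Z_2^2$.

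Next I would show $\mathit{HF}^*(L_0,L_1) = 0$, and the natural route is to exhibit $L_1$ as the image of $L_0$ under a Hamiltonian isotopy, or at least under an exact Lagrangian isotopy displacing it so that $L_0 \cap L_1 = \emptyset$ can be arranged — but in fact the cleaner statement is that $L_0$ and $L_1$ are already disjoint as given. Checking this: a point of $L_0$ has $p_1 = p_2 = 0$; on $L_{1,+}$ we need $p_2 = \psi_+(q_1) q_2 = 0$, and since $\psi_+$ is nowhere zero (it equals $\pm 1$ at the ends and is strictly monotone in between, but wait — monotone from $-1$ to $1$ means it \emph{does} vanish somewhere). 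So at the unique $q_1^\ast$ with $\psi_+(q_1^\ast) = 0$ we get $p_2 = 0$ for all $q_2$; but then we also need $p_1 = \psi_+'(q_1^\ast) q_2^2/2 = 0$, and since $\psi_+'(q_1^\ast) > 0$ this forces $q_2 = 0$. So on $L_{1,+}$ the intersection with $L_0$ is the single point $(0, q_1^\ast, 0, 0)$; similarly $L_{1,-}$ contributes one point. Thus $L_0 \cap L_1$ consists of (at most) two points, both lying in $M^{\mathrm{inv}}$ — so $L_0$ and $L_1$ are \emph{not} disjoint, and I need the differential. The two intersection points are non-degenerate after a $C^2$-small perturbation, and I would count holomorphic strips connecting them; the geometry of the Moebius twist (the function $q_2^2/2$, degenerate quadratic behaviour in the normal direction) is exactly what produces a single connecting strip, making the differential an isomorphism between the two generators and hence $\mathit{HF}^*(L_0,L_1) = 0$.

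The main obstacle is this last strip count: one must either identify the pair $(L_0, L_1)$ with a standard local model (a cotangent fibre and its image under the time-one flow of a Morse function on $S^1$ with exactly one critical point pair, twisted by the normal Moebius structure) for which Floer cohomology is classically known to vanish, or else do the index and gluing bookkeeping by hand to see the differential pairs the two generators. I would argue via the first route: after an exact isotopy $L_1$ becomes the graph of $dF$ for a generating function $F$ on $L_0 \cong \R \times S^1$ whose restriction to the core circle has no critical points (because $\psi_\pm$ is monotone), so $L_0 \cap L_1 = \emptyset$ after the isotopy — the two "intersection points" above are artifacts of the two-chart description and disappear once $L_1$ is pushed off itself consistently. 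Invariance of Floer cohomology under exact isotopy then gives $\mathit{HF}^*(L_0,L_1) = 0$ immediately. I expect the delicate point to be checking that the isotopy can be taken within the class of Lagrangians for which $\mathit{HF}$ is well-defined (the maximum principle / convexity at infinity hypothesis), which is handled as in \cite[Lemma 5.5]{khovanov-seidel} cited in the text.
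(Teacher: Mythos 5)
Your computation of $\mathit{HF}^*(L_0^{\mathrm{inv}},L_1^{\mathrm{inv}})$ is correct and matches the paper: $L_0^{\mathrm{inv}} = L_1^{\mathrm{inv}} = \{0\} \times S^1$, so after a small Hamiltonian perturbation one is doing Morse theory on $S^1$, giving $\Z_2^2$.

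The second half has several genuine gaps. First, your intersection count is wrong: you dropped the case $q_2 = 0$ with $\psi_+(q_1) \neq 0$. Setting $q_2 = 0$ in the equations for $L_{1,+}$ gives $p_1 = 0$ and $p_2 = 0$ automatically, so the whole segment $\{q_1 \in (-\delta,1/2+\delta),\ q_2 = 0\}$ lies in $L_0 \cap L_{1,+}$, and likewise for $L_{1,-}$. Thus $L_0 \cap L_1$ is the entire circle $\{p_1 = p_2 = q_2 = 0\}$, a non-transverse (and non-clean, at one point) intersection --- not two isolated points. Second, the route you say you prefer is topologically impossible: $L_1$ is a Moebius band (this is in fact how the paper introduces it) and so cannot be realized, after any isotopy, as the graph of $dF$ over the cylinder $L_0 \cong \R \times S^1$; a graph of a $1$-form over $L_0$ is diffeomorphic to $L_0$. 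So the ``push-off-by-a-generating-function'' shortcut does not exist. Third, the fallback argument --- that ``the Moebius twist produces a single connecting strip'' --- is precisely the assertion that needs proof, and is exactly where the subtlety lies.

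The paper's proof handles all of this differently. It deforms $L_1$ to $\tilde{L}_1$ by adding $\epsilon\, r(q_1)$ to the local generating functions, with $r$ a Morse function on $S^1$, which breaks up the circle of intersection into two transverse points $x_\pm$. The key step is then an index computation: of the two invariant Floer strips in $M^{\mathrm{inv}}$ connecting $x_-$ to $x_+$, one (in $\R \times (1/2,1)$) has expected dimension $0$ in $M$, while the other (in $\R \times (0,1)$) has expected dimension $-1$. This mismatch in the normal Maslov index is the entire point of the example --- it is the obstruction to equivariant transversality that motivates the stable normal trivialization hypothesis --- and it is what lets one conclude that a generic (symmetry-breaking) perturbation leaves an odd number of index-$0$ strips, hence a nonzero differential, hence $\mathit{HF}^*(L_0,L_1) = 0$. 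Your proposal does not engage with this index discrepancy at all, so it misses the mechanism that makes the lemma (and the example) work.
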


\begin{proof}
The first part is easy: inside $M^{\mathrm{inv}} = \R \times S^1$, we have $L_0^{\mathrm{inv}} = L_1^{\mathrm{inv}} = \{0\} \times S^1$, hence $\mathit{HF}^*(L_0^{\mathrm{inv}},L_1^{\mathrm{inv}}) \iso H^*(S^1;\Z_2)$.

For the second part, we need to take a closer look at the construction \eqref{eq:l0}. Identify the subset $\R \times (-\delta,1/2+\delta) \times \R^2 \subset M$ with the cotangent bundle of $(-\delta,1/2+\delta) \times \R$, where the base coordinates are $(q_1,q_2)$ and the fibre coordinates $(p_1,p_2)$. In this picture $L_0$ is the zero-section, and $L_1$ is the graph of the derivative of the function $f_+(q_1,q_2) = \psi_+(q_1)q_2^2/2$. Hence, intersection points of these two submanifolds correspond to critical points of $f_+$, which are exactly the points where $q_2 = 0$. Exactly one of those, corresponding to the unique solution of $\psi_+(q_1) = 0$, is not a Morse-Bott critical point (which means that $L_0$ and $L_1$ fail to intersect cleanly there). Now identify $\R \times (1/2-\delta,1+\delta) \times \R^2 \subset M$ with the cotangent bundle of $(1/2-\delta,1+\delta) \times \R$, but where this time the base coordinates are $(q_1,p_2)$ and the fibre coordinates $(p_1,-q_2)$. There, $L_0$ is the conormal bundle of $(1/2-\delta,1+\delta) \times \{0\}$, and $L_1$ is the graph of the derivative of $f_-(q_1,p_2) = \psi_-(q_1)p_2^2/2$. Intersection points correspond to critical points of $f_-|(1/2-\delta,1+\delta) \times \{0\}$, which are all Morse-Bott since that function vanishes identically.

Suppose that we deform $f_\pm$ to $\tilde{f}_\pm = f_\pm + \epsilon r(q_1)$, where $r$ is a Morse function on $S^1$ with a minimum at $0$ and a maximum at $1/2$, and $\epsilon>0$ small. In the first cotangent bundle picture above, $\tilde{f}_+$ has only the two obvious critical points $(0,0)$ and $(1/2,0)$, for any sufficiently small value of $\epsilon$, which means that there are no additional critical points branching out from the degenerate one. This is by a sign consideration: $\partial \tilde{f}_+/\partial q_1 = \psi'(q_1)q_2^2/2 + r'(q_1) > 0$ whenever $q_1 \in (0,1/2)$. In the second cotangent bundle picture, the same consequence holds for more standard reasons (this is a Morsification of a clean intersection, as in \cite{pozniak}). We actually want to deform $L_1$ by an exact isotopy which is fixed at infinity and which, near its intersection with $L_0$, corresponds to the change of Morse functions discussed above. Denote the result by $\tilde{L}_1$.

The Floer complex $CF^*(L_0,\tilde{L}_1)$ has two generators $x_\pm$, corresponding to the minimum and maximum of $r$. Choose $(J_t)$ to be $\iota$-invariant but otherwise generic. The moduli space $\scrM(x_-,x_+)$ is divided into open and closed subsets corresponding to different homotopy classes, and each of these subsets has a different expected dimension. The subspace $\scrM(x_-,x_+)^{\mathrm{inv}}$ contains exactly two solutions, clearly visible as strips in $M^{\mathrm{inv}} = \R \times S^1$. Consider the strip which lies in $\R \times (1/2,1)$. As part of the standard comparison between Floer theory and Morse theory, the index of this strip can be computed as the difference of the Morse indices of $\tilde{f}_+|\{p_2 = 0\}$ at the endpoints. As a consequence, this particular strip lies in the part of $\scrM(x_-,x_+)$ having expected dimension $0$ (the other strip, inside $\R \times (0,1)$, turns out to have expected dimension $-1$). Hence, if we choose $(J_t)$ generic (while preserving its symmetry), this strip will be regular in the total space, and in fact the zero-dimensional part of $\scrM(x_-,x_+)$ will be regular and contain an odd number of points. The parts of $\scrM(x_-,x_+)$ of different expected dimension may not be regular, but they will become regular if we make a small perturbation which breaks the symmetry. The Floer differential, computed after making such a perturbation, is therefore nonzero.
\end{proof}

\subsection{The normal polarization}
The key to the example above was the difference between Maslov indices in $M$ and $M^{\mathrm{inv}}$. However, this is only the simplest in a series of obstructions, and the remaining ones have nothing to do with the grading on Floer cohomology. The following general discussion of these obstructions is only for motivation, and (except for some notation which will reappear elsewhere) not needed for the main results of the paper.

Let $L_k^{\mathrm{inv}} = L_k \cap M^{\mathrm{inv}}$ be the part of $L_k$ fixed by the involution. Take the fixed part of the path space, $\scrP^{\mathrm{inv}} = \{y: [0,1] \rightarrow M^{\mathrm{inv}}, \; y(0) \in L_0^{\mathrm{inv}}, \; y(1) \in L_0^{\mathrm{inv}}\}$. The normal polarization is a map
\begin{equation} \label{eq:lgrass}
\scrP^{\mathrm{inv}} \longrightarrow U/O,
\end{equation}
unique up to homotopy, which describes the geometry in normal direction. It can be defined as follows. Let $TM^{\anti} \longrightarrow [0,1] \times M^{\mathrm{inv}}$ be the normal bundle to the fixed locus, pulled back to $[0,1] \times M^{\mathrm{inv}}$. On the subsets $\{k\} \times L_k^{\mathrm{inv}}$ this carries canonical Lagrangian subbundles $TL_k^{\mathrm{anti}}$, given by the normal bundles to the fixed parts of the Lagrangian submanifolds. This bundle-theoretic datum is classified by a map
\begin{equation} \label{eq:pair-classifying}
\big([0,1] \times M^{\mathrm{inv}}, (\{0\} \times L_0^{\mathrm{inv}}) \cup (\{1\} \times L_1^{\mathrm{inv}})\big) \longrightarrow (BU,BO).
\end{equation}
By looking at the graph of any path, this induces a map \eqref{eq:lgrass}. Note that we can pass to loop spaces one more time, and (using real Bott periodicity) obtain a map
\begin{equation} \label{eq:double-loop}
\Omega \scrP^{\mathrm{inv}} \longrightarrow \Omega(U/O) \htp \Z \times BO.
\end{equation}
There is an equivalent infinite-dimensional viewpoint, related to the previous one through index theory. For each $y \in \scrP^{\mathrm{inv}}$, let $\scrH_y$ be the Hilbert space of $W^{1/2}$-sections of $y^*TM^{\mathrm{anti}}$ with boundary conditions $y(k) \in TL_k^{\mathrm{anti}}$ (since $W^{1/2}$-functions are not continuous, this should strictly speaking be defined as the closure of the space of smooth functions with the same boundary conditions). $\scrH_y$ carries a bounded symmetric bilinear form
\begin{equation}
Q_y(Y_1,Y_2) = \omega\big( Y_1, \frac{\nabla Y_2}{dt} \big)
\end{equation}
where $\nabla$ is any symplectic connection on $TM^{\mathrm{anti}}$. This leads to a polarization of the Hilbert bundle $\scrH \rightarrow \scrP^{\mathrm{inv}}$ in the sense of \cite{CJS}. Like any polarized Hilbert bundle, this is classified by a map $\scrP^{\mathrm{inv}} \rightarrow BGL_{\mathrm{res}}$ into the restricted Grassmannian of a real Hilbert space. The connection with the previous viewpoint is given by \cite[Proposition 6.2.4]{PS}, which says that there is a homotopy equivalence $BGL_{\mathrm{res}} \htp U/O$.
%In essentially equivalent
%
%Similarly, $GL_{\mathrm{res}}$ itself is homotopy equivalent to the space of real Fredholm operators %\cite{atiyah-singer}, which in turn is homotopy equivalent to $\Z \times BO$, and in this way one %recovers \eqref{eq:double-loop}.
%\end{remark}

To see the relevance of this to transversality problems, consider \eqref{eq:Floer} for generic invariant $(H,J)$. Let $u$ be a solution which is entirely contained in $M^{\mathrm{inv}}$, with limits $x_\pm$. The associated linearized operator $D_u$ is itself equivariant, hence can be decomposed into its invariant and anti-invariant parts
\begin{equation}
D_u = D_u^{\mathrm{inv}} \oplus D_u^{\mathrm{anti}}.
\end{equation}
$D_u^{\mathrm{inv}}$ controls deformations which remain inside $M^{\mathrm{inv}}$. By the standard transversality arguments, we can assume that this is onto for all $u$. The anti-invariant part governs deformations in normal direction. For equivariant transversality, it is necessary that these operators be onto as well, and our definition of localization map further requires them to be actually invertible. There is a topological obstruction to this, given by an appropriate family Fredholm index. To express this in slightly simpler terms, fix a base point $u_0 \in \scrM(x_-,x_+)^{\mathrm{inv}}$, and consider the family of real Fredholm operators $D_u \oplus D_{u_0}^*$ over $\scrM(x_-,x_+)^{\mathrm{inv}}$. On the other hand, we have a map $\scrM(x_-,x_+)^{\mathrm{inv}} \rightarrow \Omega \scrP^{\mathrm{inv}}$, unique up to homotopy, which is constructed by gluing together $u$ and $u_0$ topologically to an annulus with boundary on $(L_0^{\mathrm{inv}},L_1^{\mathrm{inv}})$. The family index theorem shows that our family of operators $D_u \oplus D_{u_0}^*$ is classified by the composition of this map with \eqref{eq:double-loop}.

In the example from Section \ref{subsec:moebius}, the bundle $TM^{\mathrm{anti}}$ is trivial. If we choose the obvious trivialization, then $TL_0^{\mathrm{anti}}$ is compatible with that, but $TL_1^{\mathrm{anti}}$ is not, being a family of Lagrangian subspaces over $S^1$ with nonzero Maslov index. This means that the relative first Chern class $2c_1^{\mathrm{rel}} \in H^2(BU,BO;\Z)$ evaluates nontrivially on \eqref{eq:pair-classifying}. As a consequence, the map $\pi_0(\Omega \scrP^{\mathrm{inv}}) \rightarrow \pi_0(\Z \times BO) = \Z$ is nontrivial. This shows that holomorphic strips $u$ in $M^{\mathrm{inv}}$ with the same endpoints can have operators $D_u^{\mathrm{anti}}$ with different indices, which is indeed what we saw in our previous concrete computation.

\subsection{Equivariant transversality\label{subsec:stable-normal}}
While \eqref{eq:lgrass} appears to be the fundamental obstruction, we will assume a stronger condition which works on the level of \eqref{eq:pair-classifying}, and essentially amounts to a nullhomotopy of that map. We consider $M$, $(L_0,L_1)$, and $\iota$ as in Section \ref{subsec:borel-floer}. Let $n$ be the complex dimension of $M$, and $n_{\mathrm{anti}}$ the complex codimension of the fixed point set\footnote{The case where the fixed point set has components of different dimensions would be an easy generalization.}.

\begin{definition} \label{definition:stablenormal}
A stable normal trivialization consists of the following data:
\begin{itemize}
\item A trivialization of unitary vector bundles over $M^{\mathrm{inv}}$,
\begin{equation}
\phi: TM^{\mathrm{anti}} \oplus \C^N \longrightarrow \C^{n_{\mathrm{anti}} + N}
\end{equation}
for some $N \geq 0$.
\item
A Lagrangian subbundle $\Lambda_0 \subset (TM^{\mathrm{anti}} \oplus \C^N) | ([0,1] \times L_0)$ such that $\Lambda_0 | \{0\} \times L_0 = TL_0^{\mathrm{anti}} \oplus \R^N$ and $\phi(\Lambda_0 | \{1\} \times L_0) = \R^{n_{\mathrm{anti}}+N}$.
\item
A Lagrangian subbundle $\Lambda_1 \subset (TM^{\mathrm{anti}} \oplus \C^N) | ([0,1] \times L_1)$ such that $\Lambda_1 | \{0\} \times L_1 = TL_1^{\mathrm{anti}} \oplus i\R^N$ and $\phi(\Lambda_1 | \{1\} \times L_1) = i\R^{n_{\mathrm{anti}} + N}$.
\end{itemize}
If this exists, we say that $(M,L_0,L_1)$ have stably trivial normal structure.
\end{definition}

Assume from now on that we are given a stable normal trivialization. Take the product $\tilde{M} = M \times \C^N$ with the standard product symplectic structure. Equip it with the involution $\tilde{\iota} = \iota \times (-\mathrm{id})$, so that $\tilde{M}^{\mathrm{inv}} = M^{\mathrm{inv}}$. Fix a relatively compact open subset $U^{\mathrm{inv}} \subset M^{\mathrm{inv}}$. By a standard Moser argument, there is an open subset $\tilde{U} \subset \tilde{M}$ with $\tilde{U} \cap \tilde{M}^{\mathrm{inv}} \supset U^{\mathrm{inv}}$ and a (codimension zero) symplectic embedding
\begin{equation} \label{eq:tilde-phi}
\tilde\Phi: \tilde{U} \longrightarrow M^{\mathrm{inv}} \times \C^{n_{\mathrm{anti}} + N}
\end{equation}
which is $G$-equivariant, equals the identity on the fixed point set, and whose normal derivative along the fixed point set is given by the trivialization $\phi$. Similarly, there are $G$-invariant Lagrangian submanifolds $\tilde{L}_k \subset \tilde{M}$, $k = 0,1$, with the following properties. $\tilde{L}_0$ is obtained from $L_0 \times \R^N$ by a compactly supported exact Lagrangian isotopy, and similarly $\tilde{L}_1$ from $L_1 \times i\R^N$. These isotopies are trivial on the fixed part, and
\begin{equation} \label{eq:local-picture}
\begin{aligned}
& \tilde{L}_0 \cap \tilde{U} = \tilde{\Phi}^{-1}(L^{\mathrm{inv}}_0 \times \R^{n_{\mathrm{anti}} + N}), \\
& \tilde{L}_1 \cap \tilde{U} = \tilde{\Phi}^{-1}(L^{\mathrm{inv}}_1 \times i\R^{n_{\mathrm{anti}} + N}). \end{aligned}
\end{equation}
To define the relevant isotopies, one proceeds in two steps. The infinitesimal change of $\Lambda_k$ in direction of the $[0,1]$ variable is expressed by a fibrewise quadratic form. Extend this arbitrarily to a real quadratic form on $TM^{\mathrm{anti}} \oplus \C^N \rightarrow [0,1] \times M^{\mathrm{inv}}$. Next, find a compactly supported time-dependent Hamiltonian function $H_k \in \smooth([0,1] \times \tilde{M},\R)$ which is $G$-invariant, vanishes on the fixed point set, and whose Hessians along $U$ are given by the previously constructed quadratic form. The associated Hamiltonian isotopy then produces Lagrangian submanifolds which satisfy \eqref{eq:local-picture} infinitesimally, meaning that their derivatives in direction normal to the fixed point sets map to $\R^{n_{\mathrm{anti}} + N}$ and $i\R^{n_{\mathrm{anti}} + N}$ under $\phi$. Improving this to the desired statement is another standard Moser type construction.

For the application to equivariant transversality problems, we fix a time-dependent Hamiltonian and almost complex structure $(H^{\mathrm{inv}},J^{\mathrm{inv}})$ on $M^{\mathrm{inv}}$ which are suitable for defining the Floer cohomology inside the fixed point set. This requirement includes the necessary regularity property of all moduli spaces $\scrM(x_-,x_+)^{\mathrm{inv}}$. We choose $U^{\mathrm{inv}}$ so that for any map $u$ in these moduli spaces, $u(\R \times [0,1]) \subset U^{\mathrm{inv}}$, which is possible due to the convexity assumptions we have imposed. Now, consider data $(\tilde{H},\tilde{J})$ on $\tilde{M}$ which are $G$-equivariant and, in addition to the usual properties, satisfy the following conditions:
\begin{itemize}
\item $\tilde{H}|U^{\mathrm{inv}} = H^{\mathrm{inv}}$, and the Hessian of any $\tilde{H}(t,\cdot)$ in normal direction to the fixed point set vanishes along $U^{\mathrm{inv}}$.
\item Along $U^{\mathrm{inv}}$, $\tilde{J}$ agrees with the pullback of the product structure $J^{\mathrm{inv}} \times i$ by \eqref{eq:tilde-phi}. Moreover, the first derivatives of these two structures also agree along $U^{\mathrm{inv}}$.
\end{itemize}
Take any $u \in \scrM(x_-,x_+)^{\mathrm{inv}}$, consider it as a solution of Floer equation in $\tilde{M}$ associated to $(\tilde{H},\tilde{J})$, and take the associated linearized operator $\tilde{D}_u$. By assumption, its invariant part $\tilde{D}_u^{\mathrm{inv}}$ is surjective. The conditions imposed above ensure that the anti-invariant part $\tilde{D}_u^{\mathrm{anti}}$ is the standard $\bar\partial$-operator for maps $[0,1] \times \R \longrightarrow \C^{n_{\mathrm{anti}} + N}$ with real and imaginary boundary conditions along $\{0\} \times \R$ and $\{1\} \times \R$, respectively; which is invertible.

\begin{Lemma} \label{lem:generic-regular}
For a generic choice of $(\tilde{H},\tilde{J})$ subject to the conditions above, all solutions of \eqref{eq:Floer} are regular.
\end{Lemma}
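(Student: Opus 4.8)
The plan is to separate the solutions of \eqref{eq:Floer} in $\tilde{M}$ into those whose image lies in $\tilde{M}^{\mathrm{inv}} = M^{\mathrm{inv}}$ and those whose image does not: the first class will need no genericity, while the second is governed by a standard equivariant Sard--Smale argument, whose only delicate point is compatibility with the constraints placed on $(\tilde{H},\tilde{J})$ along $U^{\mathrm{inv}}$.

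For a solution $u$ contained in $\tilde{M}^{\mathrm{inv}}$ there is nothing to arrange. As explained just before the statement, $G$-equivariance of $(\tilde{H},\tilde{J})$ splits the linearized operator as $\tilde{D}_u = \tilde{D}_u^{\mathrm{inv}} \oplus \tilde{D}_u^{\mathrm{anti}}$. Since $u$ has image in $U^{\mathrm{inv}}$, the summand $\tilde{D}_u^{\mathrm{inv}}$ is exactly the linearized operator of the fixed-point Floer problem for $(H^{\mathrm{inv}},J^{\mathrm{inv}})$, hence onto by the choice of that data, while the conditions on $(\tilde{H},\tilde{J})$ along $U^{\mathrm{inv}}$ identify $\tilde{D}_u^{\mathrm{anti}}$ with the constant-coefficient $\bar\partial$-operator on $\C^{n_{\mathrm{anti}}+N}$ with real and imaginary boundary conditions, which is invertible. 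So every solution contained in $\tilde{M}^{\mathrm{inv}}$ is regular; moreover, since any admissible $(\tilde{H},\tilde{J})$ restricts to the fixed data $(H^{\mathrm{inv}},J^{\mathrm{inv}})$ with its prescribed normal $2$-jet along $U^{\mathrm{inv}}$, both these solutions and their linearizations are independent of the particular choice, so they stay regular throughout the argument below.

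It remains to make the solutions $u$ with $u(\R \times [0,1]) \not\subset \tilde{M}^{\mathrm{inv}}$ regular for generic admissible data. Fix asymptotics $x_\pm$, and let $\PP$ be the Banach manifold of $G$-equivariant pairs $(\tilde{H},\tilde{J})$ meeting all the stated conditions (as usual one uses Floer's $C^\varepsilon$-completions and recovers the $C^\infty$ statement by Taubes' argument); $\PP$ is a genuine Banach manifold because the constraints fix $(\tilde{H},\tilde{J})$ and its normal $2$-jet only along $U^{\mathrm{inv}}$, leaving an open affine set of choices elsewhere. Form the universal moduli space $\scrM^{\mathrm{univ}}(x_-,x_+)$ of triples $(u,\tilde{H},\tilde{J})$ with $u$ a solution of \eqref{eq:Floer} and $u \not\subset \tilde{M}^{\mathrm{inv}}$. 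The crux is surjectivity of the universal linearized operator. A non-constant such $u$ has an injective point $(s_0,t_0)$, $t_0 \in (0,1)$, with $p := u(s_0,t_0) \notin \tilde{M}^{\mathrm{inv}}$: the injective points are dense in $\R \times (0,1)$ by the standard somewhere-injectivity results, while $u^{-1}(\tilde{M} \setminus \tilde{M}^{\mathrm{inv}})$ is open and nonempty, so the two sets meet. (Constant solutions in this class are regular outright, since the generators of the Floer complex are non-degenerate.) Now $p$ and $\tilde{\iota}(p)$ are distinct and both lie outside the closed set $\tilde{M}^{\mathrm{inv}} \supset U^{\mathrm{inv}}$, so there is a $G$-invariant neighbourhood of $\{p,\tilde{\iota}(p)\}$ on which the conditions defining $\PP$ impose nothing; $G$-equivariant perturbations of $(\tilde{H},\tilde{J})$ supported there stay in $\PP$ and, by the usual variation-of-the-equation computation (exactly the equivariant transversality mechanism of \cite[Proposition 5.13]{khovanov-seidel}), already span a complement of the image of $\tilde{D}_u$. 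Hence $\scrM^{\mathrm{univ}}(x_-,x_+)$ is a Banach manifold, the projection to $\PP$ is Fredholm, and Sard--Smale produces a comeagre set of admissible data for which all solutions with asymptotics $x_\pm$ not contained in $\tilde{M}^{\mathrm{inv}}$ are regular. Intersecting over the countably many pairs $(x_-,x_+)$, and reverting to $C^\infty$ data, gives the assertion.

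I expect the only real point needing care -- rather than quotation -- to be the observation that a non-invariant solution always has an injective point lying outside $\tilde{M}^{\mathrm{inv}}$ itself, not merely outside $U^{\mathrm{inv}}$: it is this that confines the perturbation to a region where $\PP$ is unconstrained and so reduces the whole lemma to the equivariant version of Floer's transversality theorem; everything else is the routine bookkeeping of a Sard--Smale argument.
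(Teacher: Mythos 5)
Your proof is correct and follows essentially the same route as the paper's, which condenses the argument to citations of \cite[Section 14c]{FCPLT} and \cite[Proposition 5.13]{khovanov-seidel} together with the remark that the extra constraints on $(\tilde H,\tilde J)$ are harmless because they only restrict behaviour along $U^{\mathrm{inv}}$. Your write-up simply unfolds that argument — invariant solutions regular by the construction preceding the lemma, non-invariant ones handled by an equivariant Sard--Smale argument using an injective point outside the fixed locus — so there is nothing substantively different to flag.
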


\begin{proof}
This is essentially the same argument as in \cite[Section 14c]{FCPLT}. By construction, all invariant solutions are regular. The others are taken care of by \cite[Proposition 5.13]{khovanov-seidel}; the additional requirements on $(\tilde{H},\tilde{J})$ are unproblematic here, since they do not constrain the behaviour outside the fixed point set.
\end{proof}

\subsection{The localization map} \label{subsec:Floerlocalize}
It is not hard to see that passing from $M$ and $(L_0,L_1)$ to their stabilized versions $\tilde{M}$ and $(\tilde{L}_0,\tilde{L}_1)$ does not affect Floer cohomology or its equivariant version. In fact, if one chooses the almost complex structures and Hamiltonian perturbations in a specific way, the chain complexes will be the same. For generally unrelated choices, this means that we have quasi-isomorphisms
\begin{equation} \label{eq:q1}
\begin{aligned}
& CF(L_0,L_1) \xrightarrow{\htp} CF(\tilde{L}_0,\tilde{L}_1), \\
& CF_{\mathrm{borel}}(L_0,L_1) \xrightarrow{\htp} CF_{\mathrm{borel}}(\tilde{L}_0,\tilde{L}_1),
\end{aligned}
\end{equation}
which are themselves unique up to homotopy. From now on, we fix a stable normal trivialization, and define $(\tilde{L}_0,\tilde{L}_1)$ as well as $(\tilde{H},\tilde{J})$ as in the previous section. This in particular allows us to define a complex $CF_{\mathrm{equiv}}(\tilde{L}_0,\tilde{L}_1)$ as in Section \ref{sec:invariant-morse}, which comes with a canonical quasi-isomorphism
\begin{equation} \label{eq:q2}
CF_{\mathrm{borel}}(\tilde{L}_0,\tilde{L}_1) \xrightarrow{\htp} CF_{\mathrm{equiv}}(\tilde{L}_0,\tilde{L}_1).
\end{equation}
The proof that this is a quasi-isomorphism, carried over from Lemma \ref{th:acyclic}, uses the nilpotency of the operator $U$, which holds because we have a well-defined action functional underlying our Morse theories.

Write $\scrM(x_-,x_+)$ for the moduli spaces associated to $(\tilde{H},\tilde{J})$. Invertibility of the anti-invariant part $\tilde{D}_u$ for any $u \in \scrM(x_-,x_+)^{\mathrm{inv}}$ means that $\scrM(x_-,x_+)^{\mathrm{inv}} \subset \scrM(x_-,x_+)$ is an open and closed subset, and the same holds for the compactifications. Following the process from Section \ref{sec:invariant-morse}, one uses these moduli spaces to define a bare localization chain map
\begin{equation}
\lambda: CF_{\mathrm{equiv}}(\tilde{L}_0,\tilde{L}_1) \longrightarrow
CF(\tilde{L}_0^{\mathrm{inv}},\tilde{L}_1^{\mathrm{inv}})[[q]] =
CF(L_0^{\mathrm{inv}},L_1^{\mathrm{inv}})[[q]].
\end{equation}
From a technical viewpoint, this goes either via a choice of consistent sections, or alternatively via singular chains as outlined in Remark \ref{rem:cocycle}. It is useful to keep in mind that, because of exactness, the overall dimension of our moduli spaces is bounded above, which means that the formal power series in $\partial_q$ entering into the definition are still polynomials. Moreover, the analogues of Lemmas \ref{th:rescale-map} and \ref{th:localization} hold. The outcome, combined with our previous observations \eqref{eq:q1} and \eqref{eq:q2}, is the following:

\begin{Theorem} \label{th:th}
There is a sequence of $\Z_2[[q]]$-module maps
\begin{equation}
\Lambda^{(m)}: HF_{\mathrm{borel}}(L_0,L_1) \longrightarrow HF(L_0^{\mathrm{fix}},L_1^{\mathrm{fix}})[[q]].
\end{equation}
defined for $m \gg 0$ and satisfying $\Lambda^{(m+1)} = q\Lambda^{(m)}$. Moreover, these maps become isomorphisms after tensoring with $\Z_2((q))$.
\end{Theorem}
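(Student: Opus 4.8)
The plan is to assemble Theorem \ref{th:th} entirely from pieces already established, treating it as a bookkeeping exercise rather than a new analytic argument. The three ingredients are: (i) the Morse-theoretic localization package of Section \ref{sec:localisation}, which was deliberately written using only arguments that transplant to Floer theory; (ii) the equivariant transversality achieved in Lemma \ref{lem:generic-regular} via the stable normal trivialization, which guarantees that $\scrM(x_-,x_+)^{\mathrm{inv}} \subset \scrM(x_-,x_+)$ is open and closed, so that $\bar\scrM(x_-,x_+)^{\mathrm{non}}$ is a compact manifold with corners carrying the line bundle $\xi$ exactly as in the Morse case; and (iii) the quasi-isomorphisms \eqref{eq:q1} and \eqref{eq:q2} identifying $HF_{\mathrm{borel}}(L_0,L_1)$ with $H^*(CF_{\mathrm{equiv}}(\tilde L_0,\tilde L_1))$.

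First I would record that, because of exactness and convexity, the action functional $A$ provides an energy filtration under which every nontrivial count strictly increases action; this is what makes $U$ nilpotent and hence makes the reduction Lemma \ref{th:acyclic} go through, giving \eqref{eq:q2}, and it also ensures that the operators $\DD_1$, $\XX$, $\SS_1$ involve only finitely many powers of $\partial_q$ in each degree, so that $\lambda$ in \eqref{eq:lambda-map} is well-defined. Next I would choose a regular consistent sequence of sections (or, following Remark \ref{rem:cocycle}, compatible fundamental chains and Stiefel--Whitney cocycles) on the moduli spaces $\bar\scrM(\tilde L_0,\tilde L_1)$; the compactness and corner structure needed for the transversality theory are exactly those supplied by Lemma \ref{lem:generic-regular} together with the standard Floer Gromov-compactness in the exact setting. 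With these in hand, the cutting-down construction and the identities of Lemmas \ref{th:antislurp} and \ref{th:slaver} hold verbatim, so $\lambda$ is a chain map; composing with the projection $CF_{\mathrm{borel}}(\tilde L_0,\tilde L_1) \to CF_{\mathrm{equiv}}(\tilde L_0,\tilde L_1)$ and rescaling by $Q_{\mathrm{equiv}}^m$ as in Lemma \ref{th:rescale-map} yields the $\Z_2[[q]]$-module maps $\Lambda^{(m)}$ with $\Lambda^{(m+1)} = q\Lambda^{(m)}$. Finally, the filtration argument of Lemma \ref{th:localization} shows $\lambda Q_{\mathrm{equiv}}^m(b,c) = (0,q^m c) + (\text{action-increasing})$, so the induced map has finite-dimensional kernel and cokernel, i.e.\ becomes an isomorphism over $\Z_2((q))$; transporting along \eqref{eq:q1} and \eqref{eq:q2} gives the statement for $(L_0,L_1)$ themselves.

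The main obstacle, and the only place where genuine care is required rather than transcription, is verifying that the stabilization passage does not disturb the structures: one must check that \eqref{eq:q1} and \eqref{eq:q2} are compatible, up to homotopy, with the $\Z_2[[q]]$-module structures and with the whole localization apparatus, so that $\Lambda^{(m)}$ descends to an honest map out of $HF_{\mathrm{borel}}(L_0,L_1)$ independent of choices. This is where the specific form of $\tilde\Phi$ and of $(\tilde H,\tilde J)$ matters — the product structure $J^{\mathrm{inv}} \times i$ near the fixed locus is what forces $\tilde D_u^{\mathrm{anti}}$ to be the invertible standard $\bar\partial$-operator, hence what makes $\scrM^{\mathrm{inv}}$ open and closed — and one should note that since all the quasi-isomorphisms in question are canonical up to homotopy and the localization map is a homotopy invariant of the relevant chain-level data, this compatibility is automatic once one observes that the construction of $\tilde L_k$ and $(\tilde H,\tilde J)$ can be made to restrict to the given data on $M^{\mathrm{inv}}$. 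A secondary technical point worth flagging is the non-smoothness of the corner structure of the compactified moduli spaces (footnote \ref{foot}); as in the Morse case this is handled by working with sections that are merely continuous overall and smooth on each stratum, and by the local hemisphere-degree argument around points of $\partial\bar\scrM^{(k)}$, all of which carries over unchanged.
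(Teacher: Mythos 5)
Your proposal is correct and follows the paper's proof of Theorem \ref{th:th} essentially step for step: stabilize via the normal trivialization, invoke Lemma \ref{lem:generic-regular} to get equivariant regularity with $\scrM(x_-,x_+)^{\mathrm{inv}}$ open and closed, transplant the Morse-theoretic construction of $\lambda$ together with the rescaling and isomorphism arguments of Lemmas \ref{th:rescale-map} and \ref{th:localization} (using exactness both for nilpotency of $U$ and for the bound on powers of $\partial_q$), and finally compose with the quasi-isomorphisms \eqref{eq:q1} and \eqref{eq:q2}. The one place your emphasis differs slightly is that the paper treats the compatibility of the stabilization maps as routine and explicitly defers the stronger well-definedness/independence-of-choices statement ("we will not give the argument here"), whereas you frame it as the main obstacle; but since you then conclude it is automatic, this is a matter of presentation rather than a gap.
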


One can prove that these maps depend only on the (homotopy class of the) stable normal trivialization. We will not give the argument here. By construction, equivariant Floer cohomology fits into a long exact sequence analogous to \eqref{eq:uct},
\begin{equation} \label{eqn:uct-floer}
\cdots \rightarrow HF_{\mathrm{borel}}(L_0,L_1) \xrightarrow{q} HF_{\mathrm{borel}}(L_0,L_1) \rightarrow HF(L_0,L_1) \rightarrow \cdots
\end{equation}
By the same argument as in Section \ref{sec:smith-inequality}, this and Theorem \ref{th:th} imply the Floer-theoretic Smith inequality, in the form stated in the Introduction (Theorem \ref{thm:mainsmith}).

A particularly simple numerical consequence is as follows. Assume that $L_0^{\mathrm{inv}}$ and $L_1^{\mathrm{inv}}$ are orientable. Then, $HF(L_0^{\mathrm{inv}},L_1^{\mathrm{inv}})$ admits a modulo $2$ grading so that its Euler characteristic is, up to a dimension-dependent sign, the intersection number $\pm [L_0^{\mathrm{inv}}] \cdot [L_1^{\mathrm{inv}}]$. One can refine this by taking the fundamental groups into account. Namely, for every connected component $C$ of the path space $\scrP^{\mathrm{inv}}$ we have a direct summand $HF_C(L_0^{\mathrm{inv}},L_1^{\mathrm{inv}})$, whose Euler characteristic $\pm [L_0^{\mathrm{inv}}] \cdot_C [L_1^{\mathrm{inv}}]$ takes into account only the intersection points whose associated constant paths lie in $C$. The Smith inequality immediately yields the following lower bound:

\begin{corollary} \label{cor:separate-components}
If in addition to our other assumptions, $L_0^{\mathrm{inv}}$ and $L_1^{\mathrm{inv}}$ are orientable, we have
\begin{equation}
\mathrm{dim} \, HF(L_0,L_1) \geq \sum_C \Big| [L_0^{\mathrm{inv}}] \cdot_C [L_1^{\mathrm{inv}}] \Big|.
\end{equation}
\end{corollary}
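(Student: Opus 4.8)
The plan is to combine Theorem~\ref{thm:mainsmith} with a refinement that keeps track of the decomposition of Floer cohomology according to connected components of the relevant path spaces. First I would recall that the whole localization machinery respects the decomposition of $HF(L_0,L_1)$ and $HF(L_0^{\mathrm{inv}},L_1^{\mathrm{inv}})$ indexed by connected components of the path spaces $\scrP$ (resp. $\scrP^{\mathrm{inv}}$): the Floer differential, the equivariant differential $d_{\mathrm{borel}}$, the continuation maps appearing in $\iota_{\mathrm{morse}}$, and — crucially — the localization chain map $\lambda$ and its rescalings $\lambda Q_{\mathrm{equiv}}^m$ all count solutions of (variants of) \eqref{eq:Floer}, which only connect intersection points lying in the same path-space component. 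Hence everything splits as a direct sum over components, and Theorem~\ref{th:th} (with Lemmas~\ref{th:rescale-map}, \ref{th:localization}) applies component by component: for each component $C$ of $\scrP^{\mathrm{inv}}$, the rank of the torsion-free part of $HF_{\mathrm{borel}}(L_0,L_1)_{C'}$ (summed over the components $C'$ of $\scrP$ mapping to $C$ under inclusion $M^{\mathrm{inv}}\hookrightarrow M$) equals $\dim HF_C(L_0^{\mathrm{inv}},L_1^{\mathrm{inv}})$.

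Next I would invoke the argument of Section~\ref{sec:smith-inequality}, run separately on each component, to get the componentwise Smith inequality; summing over $C$ gives $\dim HF(L_0,L_1) \ge \sum_C \dim HF_C(L_0^{\mathrm{inv}},L_1^{\mathrm{inv}})$. It then remains to bound each summand below by $|[L_0^{\mathrm{inv}}]\cdot_C[L_1^{\mathrm{inv}}]|$. Here I would use the orientability hypothesis: when $L_0^{\mathrm{inv}}$ and $L_1^{\mathrm{inv}}$ are orientable, the Floer complex $CF(L_0^{\mathrm{inv}},L_1^{\mathrm{inv}})$ carries a $\Z/2$-grading refining the mod~$2$ intersection count, and its Euler characteristic in this grading computes (up to an overall dimension-dependent sign) the homological intersection number. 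Restricting to the generators whose constant paths lie in $C$ gives a subcomplex with Euler characteristic $\pm[L_0^{\mathrm{inv}}]\cdot_C[L_1^{\mathrm{inv}}]$, and since the total dimension of the cohomology of any complex is at least the absolute value of its Euler characteristic, we obtain $\dim HF_C(L_0^{\mathrm{inv}},L_1^{\mathrm{inv}}) \ge |[L_0^{\mathrm{inv}}]\cdot_C[L_1^{\mathrm{inv}}]|$. Summing over $C$ finishes the proof.

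The step requiring the most care is verifying that the localization map genuinely respects the component decomposition all the way through — i.e. that the stabilization procedure of Section~\ref{subsec:stable-normal}, the passage to $(\tilde L_0,\tilde L_1)$, and the identifications \eqref{eq:q1}, \eqref{eq:q2} are all compatible with the splitting by path-space components. This is essentially automatic because every map in sight is defined by counting pseudoholomorphic curves, whose asymptotics pin down the component, but one must confirm that the stabilized path space $\scrP(\tilde L_0,\tilde L_1)$ has the same component structure as $\scrP(L_0,L_1)$ near the relevant generators (true since the stabilizing isotopies are compactly supported and the $\C^N$ factor is simply connected). The orientability input is routine once one knows the intersection-number/Euler-characteristic comparison, which is standard.
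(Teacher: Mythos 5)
Your proof is correct, but you take a considerably longer route than necessary. The paper's argument is simply the chain
\begin{equation*}
\dim HF(L_0,L_1) \ \ge\ \dim HF(L_0^{\mathrm{inv}},L_1^{\mathrm{inv}}) \ =\ \sum_C \dim HF_C(L_0^{\mathrm{inv}},L_1^{\mathrm{inv}}) \ \ge\ \sum_C \bigl| [L_0^{\mathrm{inv}}]\cdot_C [L_1^{\mathrm{inv}}]\bigr|,
\end{equation*}
where the first inequality is exactly Theorem~\ref{thm:mainsmith}, the equality is the usual decomposition of Floer cohomology over components of the path space $\scrP^{\mathrm{inv}}$ (always available, nothing to prove), and the last inequality is the Euler-characteristic bound you give in your final paragraph. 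The only place one needs to refine by components is therefore on the $M^{\mathrm{inv}}$ side, \emph{after} the Smith inequality has been applied. By contrast, you spend the first two paragraphs trying to run the entire localization machinery (the identifications \eqref{eq:q1}, \eqref{eq:q2}, the chain map $\lambda$, Lemmas~\ref{th:rescale-map} and~\ref{th:localization}) component by component, which is a genuinely stronger statement than the corollary requires and is not proved in the paper. Then you immediately sum your componentwise inequality over $C$, at which point you have recovered nothing more than the total Smith inequality — so all that extra effort buys you nothing here. (Also, a small slip: the inclusion $M^{\mathrm{inv}}\hookrightarrow M$ induces a map $\pi_0(\scrP^{\mathrm{inv}})\to\pi_0(\scrP)$, so one should speak of components $C$ of $\scrP^{\mathrm{inv}}$ mapping to a component $C'$ of $\scrP$, not the other way around.) Your final paragraph is precisely the content of the paper's argument and is right; the earlier two could be replaced by a single citation of Theorem~\ref{thm:mainsmith}.
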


\begin{remark} \label{th:twist}
Let's return to the general situation from Section \ref{subsec:borel-floer}, without the assumption of stable normal triviality. We can still choose $(H,J)$ equivariantly so that their fixed parts $(H^{\mathrm{inv}},J^{\mathrm{inv}})$ give rise to regular moduli spaces inside $M^{\mathrm{inv}}$. Suppose that the compactified moduli spaces $\bar\scrM^{\mathrm{inv}} = \bar\scrM(x_-,x_+)^{\mathrm{inv}}$ have been equipped with fundamental chains, as in Remark \ref{rem:cocycle}. In addition, they carry canonical families of Fredholm operators $D^{\mathrm{anti}}_u$ measuring the obstruction to normal regularity, in the sense of \eqref{eq:double-loop}. Denote by
\begin{equation}
\eta \longrightarrow \bar\scrM^{\mathrm{inv}}
\end{equation}
the negative virtual index bundle of the family (negative means taking $[\mathrm{cokernel}] - [\mathrm{kernel}]$). The restriction of this bundle to any boundary stratum in the compactification naturally decomposes as a direct sum of the bundles associated to the various factors (this is just an abstract formulation of the usual gluing theorem for indices). Suppose that we are given singular cocycle representatives $p$ of the total Stiefel-Whitney polynomial,
\begin{equation}
[p] = 1 + w_1(\eta) + w_2(\eta) + \cdots \in H^*(\bar\scrM^{\mathrm{inv}};\Z_2),
\end{equation}
which moreover should be compatible with each other via restriction to the boundary strata and the isomorphisms mentioned above (we will not consider the technical details of how to construct such representatives here). Now pass to cochains with coefficients in $\Z_2((q))$, and make $p$ into a homogeneous element $\hat{p}$ of degree $-\mathrm{index}(D^{\mathrm{anti}}_u)$, by multiplying its graded pieces with appropriate powers of $q^{\pm 1}$ (if the moduli space has components with different indices, this has to be done separately on each component, of course). One can then define a twisted Floer differential on $CF_{\mathrm{twisted}}(L_0^{\mathrm{inv}},L_1^{\mathrm{inv}}) = CF(L_0^{\mathrm{inv}},L_1^{\mathrm{inv}})((q))$ by evaluating these cocycles on the fundamental chains:
\begin{equation}
d_{\mathrm{twisted}}(x_-) = \sum_{x_+} \langle \hat{p}, [\bar\scrM(x_-,x_+)^{\mathrm{inv}}] \rangle x_+.
\end{equation}
The resulting cohomology groups $HF_{\mathrm{twisted}}(L_0^{\mathrm{inv}},L_1^{\mathrm{inv}})$ reduce to ordinary Floer cohomology in the presence of stable normal trivialization. We would like to propose these twisted groups as the candidate target for a localization map defined in general.

To illustrate the effect of twisting, we can return to the example from Section \ref{subsec:moebius}. In that case, the unique nontrivial moduli space $\scrM(x_-,x_+)^{\mathrm{inv}}$ consisted of two points, one of which was regular in $M$ (which means that $D^{\mathrm{anti}}_u$ has index $0$), whereas the other was irregular (the index would be $-1$). By definition, $\hat{p}$ is $1$ on one point and $q$ on the other point, so that
\begin{equation}
d_{\mathrm{twisted}}(x_-) = (1+q)x_+.
\end{equation}
Therefore $HF_{\mathrm{twisted}}(L_0^{\mathrm{inv}},L_1^{\mathrm{inv}}) = 0$, which is consistent with the conjectured existence of a localization map.
\end{remark}

\section{From symplectic Khovanov to Heegaard Floer\label{sec:four}}

\subsection{Algebraic geometry and topology\label{sec:equivariant}}
We begin by recalling some features of the specific transverse slices from \cite{SS}, and simultaneously equip them with involutions. Fix an integer $m \geq 1$. Let $\Slice \subset \mathfrak{sl}_{2m}(\C)$ be the affine subspace consisting of matrices of the form
\begin{equation} \label{eq:y-matrix}
A = \begin{pmatrix}
A_{1} & I &&& \\
A_{2} && I && \\
\dots &&& \dots & \\
A_{m-1} &&&& I \\
A_{m} &&&& 0
\end{pmatrix}
\end{equation}
with $A_1 \in \mathfrak{sl}_2(\C)$, $A_k \in \mathfrak{gl}_2(\C)$ for $k>1$, and where $I \in \mathfrak{gl}_2(\C)$ is the identity matrix. Let $\Sym^0_{2m}(\C)$ be the subspace of the symmetric product $\Sym_{2m}(\C)$ consisting of collections with center of mass zero. Symmetric polynomials yield an isomorphism $\Sym^0_{2m}(\C) \iso \C^{2m-1}$. Consider the adjoint quotient map $\chi: \Slice \rightarrow \Sym^0_{2m}(\C)$, which takes a matrix $A$ to the collection of its eigenvalues. If we identify $\Sym^0_{2m}(\C) \iso \C^{2m-1}$ as before, this map is just given by the nontrivial coefficients of the characteristic polynomial. In our case,
\begin{equation} \label{eq:det2}
\begin{aligned}
& \det(x - A) = \det(A(x)), \quad \text{where} \\
& A(x) = x^m I - x^{m-1} A_1 - x^{m-2} A_2 - \cdots - A_m \in \mathfrak{gl}_2(\C[x]).
\end{aligned}
\end{equation}
The part of $\chi$ lying over the open subset $\Conf_{2m}^0(\C) \subset \Sym^0_{2m}(\C)$ of configurations (unordered $2m$-tuples of pairwise distinct points) is a differentiable fibre bundle. Fix some $t \in \Conf_{2m}^0(\C)$, and denote the fibre of $\chi$ at that point by $\Y$. By definition, this is a smooth affine variety of complex dimension $2m$. Consider the holomorphic involution which transposes each of the $A_k$:
\begin{equation}\label{eq:involution}
\iota : \Slice \longrightarrow \Slice, \qquad \iota(A_1,\ldots,A_m) = (A_1^{\mathrm{tr}},\ldots,A_m^{\mathrm{tr}}).
\end{equation}
Clearly, the function \eqref{eq:det2} is $\iota$-invariant, hence we get an induced involution on $\Y$. The fixed point sets $\Slice^{\mathrm{inv}} \subset \Slice$ and $\Y^{\mathrm{inv}} \subset \Y$ are defined by the vanishing of the $m$ functions $(A_i)_{12}-(A_i)_{21}$.

Manolescu \cite[Theorem 1.1]{Manolescu} showed that $\Y$ can be identified with an open subset of a Hilbert scheme. More precisely, for our given $t \in \Conf_{2m}^0(\C)$ and the associated monic polynomial $f(x) = \prod_i (x-t_i)$, consider the smooth affine algebraic surface $\X = \{f(x) + yz = 0\} \subset \C^3$. What Manolescu constructed is an embedding
\begin{equation} \label{eq:manolescu}
\Y \longrightarrow \mathrm{Hilb}_m(\X),
\end{equation}
whose image is the open subset of those subschemes whose image under projection $x: \X \rightarrow \C$ is again of length $m$. We use slightly different coordinates on $\X$ than \cite[Section 2.4]{Manolescu}, but it is straightforward to adapt the explicit formulae given there. Inspection of those formulae shows that $\iota$ corresponds to exchanging $y$ and $z$. Take the fixed point set of the last-mentioned involution of $\X$, which is the affine hyperelliptic curve $\X^{\mathrm{inv}} = \{f(x) + y^2 = 0\} \subset \C^2$.

\begin{lemma} \label{th:mumford}
The restriction of \eqref{eq:manolescu} yields an embedding
\begin{equation} \label{eq:mumford}
\Y^{\mathrm{inv}} \longrightarrow \Sym_m(\X^{\mathrm{inv}}).
\end{equation}
Its image consists of those divisors $D = p_1 + \cdots + p_m$ which contain no fibre of the hyperelliptic involution, which means no pair $(x,y) + (x,-y)$.
\end{lemma}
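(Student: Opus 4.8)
The plan is to work entirely with Manolescu's explicit coordinates on $\X$ and the Hilbert scheme embedding \eqref{eq:manolescu}, and to trace through what the involution $\iota$ (i.e.\ $y \leftrightarrow z$ on $\X$) does to a point of $\Y$ realized as a length-$m$ subscheme $Z \subset \X$. First I would recall that $\Y^{\mathrm{inv}}$ is, by definition, the locus where each $A_k$ is symmetric, and that under \eqref{eq:manolescu} this is precisely the locus of subschemes $Z$ fixed by the automorphism $\sigma\colon (x,y,z)\mapsto (x,z,y)$ of $\X$. So the real content is: describe the $\sigma$-fixed length-$m$ subschemes of $\X$ whose projection to the $x$-line still has length $m$, and identify this set with a subset of $\Sym_m(\X^{\mathrm{inv}})$.

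The key steps, in order: (1) Observe that a subscheme $Z$ with $\sigma(Z) = Z$ decomposes according to the $\sigma$-orbit structure of its support. A point $p = (x,y,z)$ of $\X$ either lies on the fixed curve $\X^{\mathrm{inv}} = \{y=z\}$ (equivalently $\{f(x)+y^2=0\}$), or it is swapped with $\sigma(p) = (x,z,y) \neq p$; the latter pair has the same $x$-coordinate. (2) Use the hypothesis that $x\colon \X \to \C$ restricted to $Z$ has length $m$ — the same as the length of $Z$ — to conclude that $Z$ is \emph{curvilinear} over the $x$-line, i.e.\ at each point of its support $Z$ is cut out (locally, in the fibre direction) by a single equation, and distinct points of $\mathrm{Supp}(Z)$ have distinct $x$-coordinates. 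This is the step that forces $Z$ to avoid any full hyperelliptic fibre $(x,y)+(x,-y)$: such a pair has a single $x$-value but contributes length $2$ to $Z$ while only length $1$ to its $x$-image, contradicting the length-$m$ condition (note on $\X^{\mathrm{inv}}$ the point $(x,-y)$ is the image of $(x,y)$ under the hyperelliptic involution, which is exactly $\sigma$ restricted to $\X^{\mathrm{inv}}$ composed with nothing — one must check the branch points $y=0$ separately, where the fibre is a single point with multiplicity). (3) Given curvilinearity, a $\sigma$-invariant $Z$ must have $\sigma$ act trivially on $\mathrm{Supp}(Z)$ (a swapped pair would share an $x$-coordinate), so $\mathrm{Supp}(Z) \subset \X^{\mathrm{inv}}$; and since $Z$ is curvilinear and $\sigma$-invariant with support on the fixed curve, the scheme structure of $Z$ at each point is also $\sigma$-invariant, hence $Z$ is the pushforward of an effective divisor $D$ on $\X^{\mathrm{inv}}$ of degree $m$. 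Conversely, any such $D$ not containing a hyperelliptic fibre gives a $\sigma$-invariant $Z$ with $x$-image of length $m$, lying in the image of \eqref{eq:manolescu}. (4) Assemble (2) and (3): the restriction of \eqref{eq:manolescu} maps $\Y^{\mathrm{inv}}$ bijectively onto the stated subset of $\Sym_m(\X^{\mathrm{inv}})$, and since \eqref{eq:manolescu} is an algebraic embedding and $\Sym_m(\X^{\mathrm{inv}})$ sits naturally inside $\Hilb_m(\X)$ as the $\sigma$-fixed curvilinear locus, the restricted map is an embedding.

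The main obstacle I expect is step (2)–(3), specifically the careful bookkeeping of scheme structure at the ramification points of the hyperelliptic curve (where $y = 0$, i.e.\ $x = t_i$) and at points where $Z$ is non-reduced. There one must check that $\sigma$-invariance of a length-$\ell$ thickening, combined with curvilinearity over the $x$-line, genuinely forces the thickening to lie inside $\X^{\mathrm{inv}}$ rather than, say, straddling it — this is where the precise form $\X^{\mathrm{inv}} = \{z = y\}$ and the fact that $\sigma$ is an \emph{involution} (so its fixed locus is a smooth divisor, and an invariant curvilinear scheme supported there is contained in it) do the work. I would handle this by passing to the completed local ring of $\X$ at such a point, where $\sigma$ is a linear involution of a $2$-dimensional regular local ring and the subscheme is defined by one equation in the fibre variable; the classification of $\sigma$-invariant ideals there is elementary. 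Everything else — the identification of $\Y^{\mathrm{inv}}$ with the $\sigma$-fixed locus, and the translation between "length $m$ over $\C_x$" and curvilinearity — is bookkeeping with Manolescu's formulae and standard Hilbert-scheme facts.
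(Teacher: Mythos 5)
Your plan is correct, but it takes a genuinely different route from the paper's proof. The paper \emph{deliberately avoids} going through the Hilbert-scheme picture: it gives a direct, self-contained construction via the Mumford-style triple $(U,V,W)$. Namely, a point of $\Y^{\mathrm{inv}}$ is a symmetric matrix polynomial $A(x) = \begin{pmatrix} W & V \\ V & U \end{pmatrix}$ with $UW - V^2 = f$, and the map \eqref{eq:mumford} simply sends this to the divisor cut out by the ideal $(U(x),\,y - V(x))$; the inverse is given by explicit (Lagrange/Hermite) interpolation: set $U = \prod(x - x_i)$, choose $V$ of degree $\le m-1$ interpolating the $y_i$ (approximating the branch of $\sqrt{-f}$ to the right order at repeated roots), and put $W = (f + V^2)/U$. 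The condition of avoiding hyperelliptic fibres is exactly what makes the interpolation possible and unique, and the whole argument stays inside elementary polynomial algebra — no Hilbert schemes, no curvilinearity. The paper even remarks that the result ``could be derived from'' Manolescu's embedding, which is essentially your route, but that restricting to the fixed locus ``simplifies the picture considerably.''

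What your approach buys is conceptual clarity: you see transparently that $\Y^{\mathrm{inv}}$ is the locus of $\sigma$-fixed, curvilinear-over-$x$ length-$m$ subschemes of $\X$, and the local argument — pass to the completed local ring, where $\sigma$ acts with eigenvalues $\pm 1$ on $\mathfrak{m}_p/\mathfrak{m}_p^2$; curvilinearity forces $\mathcal{O}_{Z,p} \cong \C[x]/(x-x_0)^\ell$; the anti-invariant coordinate $v = y - z$ must map to zero in a $\sigma$-invariant quotient generated by $x$; hence $I_Z \supset (v)$ and $Z \subset \X^{\mathrm{inv}}$ — is clean and handles all ramification and non-reducedness at once. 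You are right that the same curvilinearity condition simultaneously excludes hyperelliptic fibres, including the doubled branch points where $dx$ vanishes on $T_p\X$, so no separate case analysis is really needed there. What the paper's approach buys is explicitness: the $(U,V)$ formula is exactly what is reused, e.g.\ in Remark \ref{th:relative-hilb} and implicitly in Lemma \ref{th:aj}, so the authors get more mileage from the direct construction. Both are valid proofs; yours is the ``Hilbert scheme purist'' version that the paper chose not to present.
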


\begin{proof}
This could be derived from the previous result, which is what happens in \cite[Section 7]{Manolescu}. However, restriction to the fixed point set actually simplifies the picture considerably, and we therefore prefer to give a self-contained account, starting with the description of the map. Take some point in $\Y^{\mathrm{inv}}$, and write the associated matrix from \eqref{eq:det2} as
\begin{equation}\label{eq:matrixdefn}
A(x) = \begin{pmatrix} W(x) & V(x) \\ V(x) & U(x) \end{pmatrix}.
\end{equation}
By construction $U$ and $W$ are monic of degree $m$, and their $x^{m-1}$ coefficients sum to zero; while $V$ has degree $\leq m-1$. Since $f(x) = UW - V^2$, the ideal
\begin{equation} \label{eq:uv-ideal}
(U(x), y-V(x)) \subset \C[x,y]
\end{equation}
contains $f(x)+y^2$, hence defines a subscheme of $\X^{\inv} \subset \C^2$. It is obvious that \eqref{eq:uv-ideal} has length $m$, and the same holds for its intersection with $\C[x] \subset \C[x,y]$. The latter property means that the image of our subscheme under the projection $x: \X^{\inv} \rightarrow \C$ is still of length $m$. Finally, since $\X^{\mathrm{inv}}$ is a smooth curve, the Hilbert scheme is the same as the symmetric product. This defines the map \eqref{eq:mumford}, and shows that its image lies in the previously described open subset.

In the reverse direction, take a degree $m$ effective divisor $D = p_1 + \cdots + p_m$ on $\X^{\mathrm{inv}}$ which contains no fibre of the hyperelliptic involution. Write $p_i = (x_i,y_i)$, and set $U(x) = \prod_i (x - x_i)$. If the $x_i$ are pairwise distinct, take the unique polynomial $V(x)$ of degree $\leq m-1$ for which $V(x_i) = y_i$. %explicitly
%\begin{equation}
%V(x) = \sum_i y_i \cdot \frac{\prod_{j\neq i} (x-x_j)}{\prod_{j\neq i} (x_i-x_j)}.
%\end{equation}
In general, several of the $x_i$ can coincide, but only if the corresponding $y_i$ are all nonzero and coincide as well. In that case, one can still find a unique $V$ which, at each such point, approximates the branch of $\sqrt{-f(x)}$ with value $y_i$ to order equal to the appropriate multiplicity. By construction, $f(x)+V^2$ is divisible by $U$, and we then define $W$ by $f(x)+V^2=UW$. Since $\mathrm{deg}(V) \leq m-1$, $W$ is necessarily monic of degree $m$. Finally, since the roots of $f$ have center of mass zero, the $x^{m-1}$ coefficients of the polynomials $U$ and $W$ must sum to zero. Then defining $A(x)$ as in \eqref{eq:matrixdefn} yields a point in $\Y^{\inv}$.
\end{proof}

\begin{remark} \label{th:relative-hilb}
The embedding described above has a natural extension to the whole of $\Slice^{\inv}$. Namely, let $\TT \rightarrow \Sym^0_{2m}(\C)$ be the family of double branched covers $f(x) + y^2 = 0$, where $f$ ranges over all monic polynomials of degree $2m$ with zero subleading coefficient. The total space $\TT$ is a smooth variety of dimension $2m$. To this, one can associate its relative Hilbert scheme $\HH = \mathrm{Hilb}_m(\TT / \Sym^0_{2m}(\C))$. More explicitly, $\TT$ is a subvariety of $\C^2 \times \Sym^0_{2m}(\C)$, and correspondingly $\HH$ lies inside $\Hilb_m(\C^2) \times \Sym^0_{2m}(\C)$. By the same formula as before, we then get an embedding fibered over $\Sym^0_{2m}(\C)$,
\begin{equation} \label{eq:hilb-embedding}
\Slice^{\inv} \longrightarrow \HH.
\end{equation}
\end{remark}

%To rephrase the last part in a less coordinate-dependent way, our divisor $D$ on $\X^{\inv}$ defines a %line bundle, whose pushforward via the (affine, of course) projection $\X^{\inv} \rightarrow \C$ is an %algebraically trivial rank two vector bundle, and in fact canonically trivialized. Multiplication by %$y$ on the fibres of that bundle yields an endomorphism of that trivial bundle, which is precisely %$A(x) \in \mathfrak{gl}_2(\C[x])$. The subscheme $(U(x), y-V(x))$ arises as a spectral curve for this %endomorphism. \qed

Let $\CC$ be the completion of $\X^{\inv}$ to a closed smooth algebraic curve, and $q_\pm$ its points at infinity. Inside the Jacobian $\Jac(\CC)$ of degree $m$ line bundles, take the theta divisor
\begin{equation} \label{eq:theta}
\Theta = \{ L \, : \, H^0(L(-q_+ - q_-)) \neq 0 \}.
\end{equation}
This makes sense since $\CC$ is of genus $m-1$, and $L(-q_+ - q_-)$ of degree $m-2$. Consider $\Y^{\mathrm{inv}}$ as a subset of $\Sym_m(\CC)$ via \eqref{eq:mumford}.

\begin{lemma} \label{th:aj}
The restriction of the Abel-Jacobi map $\Sym_m(\CC) \rightarrow \Jac(\CC)$ to $\Y^{\inv}$ is a fibre bundle over $\Jac(\CC) \setminus \Theta$ with fibre $\C^*$. Moreover, that fibre bundle is topologically trivial.
\end{lemma}

\begin{proof}
Take $D = p_1 + \cdots + p_m$, where $p_i \neq q_\pm$. If the line bundle $L = {\mathcal O}(D)$ lies in $\Theta$, there is a rational function $r$ which vanishes at $q_\pm$ and has poles of the appropriate orders at the points of $D$. Writing $p_i = (x_i,y_i)$, then clearly $r \prod_i (x-x_i)$ has poles of order $m-1$ at $q_\pm$, and none elsewhere, hence is of the form $g(x) + y h(x)$ for some polynomials $g$, $h$. Since $y$ has a pole of order $m$ at $q_\pm$, it follows that $h = 0$, so $r = g(x)/\prod_i (x-x_i)$, where $g$ must have degree $\leq m-1$. But then, $r$ either has a pole of order $\geq 2$ at a fixed point of the hyperelliptic involution, or two poles at points $(x,\pm y)$, hence $D$ cannot lie in the image of \eqref{eq:mumford}.

Conversely, suppose that $H^0(L(-q_+ - q_-)) = 0$, which by Riemann-Roch means $H^1(L(-q_+ - q_-)) = 0$. Then evaluation at $q_\pm$ defines an isomorphism $H^0(L) \rightarrow L_{q_+} \oplus L_{q_-}$. Hence, there is a $\C^*$ worth of divisors within the pencil $|L|$ which avoid $q_+$ and $q_-$. If such a divisor contains a fibre of the hyperelliptic involution, one can find a linearly equivalent one containing both points $q_\pm$, which is a contradiction to the original assumption. Hence, all our $\C^*$ family of divisors lie in the image of \eqref{eq:mumford}. This argument actually shows that $\Y^{inv} \rightarrow \Jac(\CC) \setminus \Theta$ can be identified with the complement of the two coordinate sections inside the $\C P^1$-bundle with fibres $\mathbb{P}(\C \oplus \mathit{Hom}(L_{q_+}, L_{q_-}))$. One can define such a $\C P^1$-bundle over the Jacobian for any two points of the curve $\CC$, and its topological type is independent of which points one chooses. In particular, by taking the two points to coincide, one sees that the bundle is topologically trivial (its restriction to $\Jac(\CC) \setminus \Theta$ must then be holomorphically trivial as well, since that space is affine and hence Stein).
\end{proof}

Lemmas \ref{th:mumford} and \ref{th:aj} are actually variations on arguments from \cite{Mumford}, the main difference being that \cite{Mumford} concerns hyperelliptic curves which have a branch point at infinity. %It is worth while to compare some topological features of $\Y^{\inv}$ and $\Sym_m(\X^{\inv})$:

\begin{lemma} \label{th:topology}
%Inclusion induces an isomorphism $H_1(\Y^{\inv}) \iso H_1(\Sym_m(\X^{\inv}))$.
%Moreover, $\pi_2(\Sym_m(\X^{\inv})) = 0$.
%Finally,
The first Chern class $c_1(\Sym_m(\X^{\mathrm{inv}}))$ is represented by the Poincar{\'e} dual of $-D$, where $D = \Sym_m(\X^{\inv}) \setminus \Y^{\inv}$.
\end{lemma}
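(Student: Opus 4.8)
The plan is to compute $c_1$ of the symmetric product using the fact established in Lemma~\ref{th:aj}, namely that $\Y^{\inv}$ is an affine $\C^*$-bundle (indeed a trivial one) over $\Jac(\CC)\setminus\Theta$, sitting inside a $\C P^1$-bundle over the Jacobian. First I would recall the standard formula for the canonical bundle of a symmetric product of a curve: writing $g = m-1$ for the genus of $\CC$, the Abel--Jacobi map $\Sym_m(\CC)\to\Jac(\CC)$ realizes $\Sym_m(\CC)$ as a $\P^1$-bundle (for $m$ large relative to $g$; here $m \geq g+1$) away from the locus where $h^0$ jumps, and the adjunction/Riemann--Roch bookkeeping gives $K_{\Sym_m(\CC)} = \O(-2\Xi + (\text{stuff pulled back from }\Jac))$ type formulas, where $\Xi$ is the natural big divisor. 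The cleaner route, though, is to use the description from the proof of Lemma~\ref{th:aj}: $\Sym_m(\CC) \to \Jac(\CC)$ is, over the complement of $\Theta$, the projectivization $\P(\C\oplus\Hom(L_{q_+},L_{q_-}))$, and $\Y^{\inv}$ is the complement of the two coordinate sections. So the compactification $\X^{\inv}\hookrightarrow\CC$ together with the sections at $q_\pm$ account for the divisor $D = \Sym_m(\X^{\inv})\setminus\Y^{\inv}$.

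Concretely, I would argue as follows. The complement $D = \Sym_m(\CC) \setminus \Sym_m(\X^{\inv})$ (the "divisor at infinity" of the symmetric product of the affine curve) is the image of $\{q_+\} + \Sym_{m-1}(\CC) \cup \{q_-\}+\Sym_{m-1}(\CC)$, i.e. divisors having a point at infinity; and additionally inside $\Sym_m(\X^{\inv})$ one must remove the locus $\Theta'$ of divisors containing a hyperelliptic fibre, which by Lemma~\ref{th:aj} maps onto $\Theta$. Wait --- one has to be careful: $D = \Sym_m(\X^{\inv})\setminus\Y^{\inv}$ is the part of the boundary \emph{inside} the symmetric product of the affine curve, so it is exactly the locus of divisors on $\X^{\inv}$ that contain a hyperelliptic fibre. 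The key input is then that under Abel--Jacobi this $D$ maps to $\Theta\subset\Jac(\CC)$, and $\Theta$ is a principal polarization with $[\Theta]^{\dim} $ controlling everything. I would then invoke the classical fact (this is the Mumford-style computation referenced right after Lemma~\ref{th:aj}, cf.\ \cite{Mumford}) that for the symmetric product the canonical class is expressed via the theta divisor and the points at infinity; taking first Chern classes and matching the contributions shows $c_1(\Sym_m(\X^{\inv})) = -\mathrm{PD}[D]$.

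Alternatively, and perhaps most robustly, I would use the exact sequence for an open subvariety: $\Y^{\inv}$ is affine (being the fibre $\Y$ of an affine morphism, or directly Stein as noted), and smooth, so by Lemma~\ref{th:aj} it is homotopy equivalent to $\Jac(\CC)\setminus\Theta$, a smooth affine variety with the homotopy type of a wedge-like space; in any case $c_1(T\Sym_m(\X^{\inv}))$ restricted to $\Y^{\inv}$ must be computed. Since $\Sym_m(\X^{\inv})$ is an open subset of the smooth projective $\Sym_m(\CC)$ containing $\Y^{\inv}$ as the complement of $D$, the adjunction formula gives $c_1(\Sym_m(\X^{\inv})) = c_1(\Sym_m(\CC))|_{\Sym_m(\X^{\inv})}$, and I must show this restricts to $-\mathrm{PD}[D]$, i.e. that $c_1(\Sym_m(\CC))$ and $-[D_{\mathrm{tot}}]$ (where $D_{\mathrm{tot}}$ is the full boundary divisor $\Sym_m(\CC)\setminus\Y^{\inv}$) agree on $H^2(\Sym_m(\X^{\inv}))$ modulo the classes supported on the part of the boundary that doesn't meet $\Sym_m(\X^{\inv})$. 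This reduces to the numerical identity $K_{\Sym_m(\CC)} + [D_{\mathrm{tot}}] = 0$ in $H^2(\Y^{\inv})$, which I would verify from the $\P^1$-bundle picture: on $\P(\C\oplus\Hom(L_{q_+},L_{q_-}))$ over $\Jac(\CC)\setminus\Theta$ the relative canonical class is $-2\zeta + (\text{det of the rank-2 bundle})$ where $\zeta$ is the relative hyperplane class, and the two coordinate sections $S_\pm$ satisfy $[S_+] + [S_-] = 2\zeta + (\text{pullback})$, and on the affine base the pulled-back classes vanish; so $K + [S_+] + [S_-] = 0$ there, which is exactly the claim since $D = S_+\cup S_-$ restricted to $\Y^{\inv}$ (intersected with the affine part) --- one also uses that $K_{\Jac(\CC)\setminus\Theta} = 0$ since an abelian variety has trivial canonical bundle and the affine base has trivial $H^2$ contribution, absorbing $\Theta$.

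The main obstacle I anticipate is the bookkeeping of \emph{which} boundary components contribute: $\Sym_m(\CC)\setminus\Y^{\inv}$ has (at least) two types of pieces --- the "divisors touching $q_\pm$" part and the "divisors containing a hyperelliptic fibre" part ($D$ itself) --- and one must check that in $H^2$ of the affine variety $\Sym_m(\X^{\inv})$ the first type becomes trivial (or gets correctly absorbed into the $-2\zeta$ vs.\ $[S_+]+[S_-]$ comparison) while only $D$ survives with coefficient $-1$. Handling the locus where divisors have repeated points, and confirming that $D$ as defined (complement of $\Y^{\inv}$ in $\Sym_m(\X^{\inv})$) is genuinely a reduced Cartier divisor whose Poincar\'e dual is what appears, will require the same care with the non-canonical smooth structures and multiplicities that footnote~\ref{foot} flags elsewhere; but since everything here is complex-algebraic and smooth, this is manageable. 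I expect the cleanest writeup to go through the $\P^1$-bundle model of Lemma~\ref{th:aj} rather than a direct Chern-class computation on $\Sym_m(\CC)$.
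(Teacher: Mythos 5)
Your first sketch is essentially the route the paper takes, and it is the correct one: identify $D$ as the locus of divisors on $\X^{\inv}$ containing a hyperelliptic fibre, note that its closure in $\Sym_m(\CC)$ is the Abel--Jacobi preimage of $\Theta$ (the ACGH facts about hyperelliptic linear systems, not literally Mumford), and then use Macdonald's explicit formula $c_1(\Sym_m(\CC)) = (m - g + 1)[\Sym_{m-1}(\CC) \times \text{pt}] - u^*[\Theta]$ with $g = m - 1$, so the first coefficient is $2$. Taking the point to be $q_\pm$ makes that summand vanish on restriction to $\Sym_m(\X^{\inv})$, leaving $-[D]$. If you carry this out with the Macdonald reference instead of gesturing at a ``Mumford-style computation,'' you have a complete proof.

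Your ``alternative'' approach via the $\P^1$-bundle has a real gap and also conflates two different divisors. The $\P^1$-bundle $\P(\C \oplus \Hom(L_{q_+},L_{q_-}))$ from Lemma \ref{th:aj} lives only over $\Jac(\CC) \setminus \Theta$, and its two coordinate sections $S_\pm$ are the loci of divisors passing through $q_\pm$, which lie entirely at infinity in $\Sym_m(\CC)$ and are disjoint from $\Sym_m(\X^{\inv})$. The divisor $D$ (divisors containing a hyperelliptic fibre) sits over $\Theta$, i.e., precisely over the locus you have removed from the base, so the $\P^1$-bundle computation never sees it. The identity $K + [S_+] + [S_-] = 0$ over the affine base only establishes $c_1|_{\Y^{\inv}} = 0$. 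By the Gysin sequence $H^0(D) \to H^2(\Sym_m(\X^{\inv})) \to H^2(\Y^{\inv})$ that tells you $c_1(\Sym_m(\X^{\inv}))$ is some integer multiple of $[D]$, but gives no information about the multiplier; and the sentence ``$D = S_+\cup S_-$ restricted to $\Y^{\inv}$'' is simply incorrect. To pin down the coefficient $-1$ by this route you would need a separate local argument (for instance computing the order of vanishing of the algebraic volume form along $D$, as in Remark \ref{th:pole}), or you should just use the first approach.
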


\begin{proof} %It is a classical fact that $\Theta$ has singularities in codimension $2$, hence is irreducible, which means that its top-dimensional cohomology is $H^{2m-4}(\Theta) \iso \Z$. Using this and Poincar{\'e} duality, we get a long exact sequence
%\begin{equation}
%\cdots \rightarrow H_2(\Jac(\CC)) \rightarrow \Z \rightarrow H_1(\Jac(\CC) \setminus \Theta) %\rightarrow H_1(\Jac(\CC)) \rightarrow 0.
%\end{equation}
%The map to $\Z$ is the intersection number with $\Theta$, which is onto since that divisor is primitive in homology. From this and Lemma \ref{th:aj}, it follows that $H_1(\Y^{\inv}) \iso \Z^{2m-1}$. On the other hand, $\Sym_m(\X^{\mathrm{inv}})$ is homotopy equivalent to the $m$-skeleton of $(S^1)^{2m-1}$ by \cite{ong,kallel-salvatore}, in particular has fundamental group $\Z^{2m-1}$ and trivial second homotopy group. The map $H_1(\Y^{\inv}) \rightarrow H_1(\Sym_m(\X^{\inv}))$ is onto, since the difference between the two spaces has real codimension two. Hence, it must be an isomorphism (note that the corresponding statement for fundamental groups is not true: $\pi_1(\Sym_m(\X^{\inv})) \iso \Z^{2m-1}$ , but $\pi_1(\Y^{\inv})$ is nonabelian already for $m = 2$, by Lemma \ref{th:aj}).

%$c_1(\Y^{\inv}) = 0$ is clear from Lemma \ref{th:aj}, or alternatively it could be derived directly %from the definition as a complete intersection inside affine space.
$D$ is the image of a map $\C \times \Sym_{m-2}(\X^{\inv}) \rightarrow \Sym_m(\X^{\inv})$. Its closure is a map $\C P^1 \times \Sym_{m-2}(\CC) \rightarrow \Sym_m(\CC)$. Well-known facts about linear systems on hyperelliptic curves \cite[p.\ 13]{ACGH} imply that the image of this map is precisely the preimage of $\Theta$ under the Abel-Jacobi map. On the other hand, $c_1(\Sym_m(\CC))$ is represented by $2[\Sym_{m-1}(\CC) \times \{\mathit{point}\}]$ minus the preimage of $\Theta$ \cite[p.\ 337]{MacD}. Restricting to $\Sym_m(\X^{\inv})$ kills the first summand.
\end{proof}

\begin{remark} \label{th:pole}
As a variety cut out inside affine space (the space $\Slice^\inv$) by independent equations (the coefficients of $\chi$), $\Y^\inv$ carries an algebraic volume form, determined up to a constant. Lemma \ref{th:topology} says that its extension to $\Sym_m(\X^{\inv})$ has simple zeros along the irreducible divisor $D$, which means that it yields a trivialization of the canonical bundle of $\Sym_m(\X^{\inv})$ twisted by ${\EuScript O}(-D)$.
\end{remark}

\subsection{Symplectic forms and Lagrangian submanifolds}
In \cite{SS}, we equipped $\Slice$ with an exact K{\"a}hler form $\Omega$ as follows. Fix some $\alpha>m$, and for each $1 \leq k \leq m$ choose a strictly subharmonic function $\psi_k: \C \rightarrow \R$, such that $\psi_k(z) = |z|^{\alpha/k}$ at infinity. Apply $\psi_k$ to each entry of $A_k$, and let $\psi$ be the sum of the resulting terms, which is an exhausting plurisubharmonic function. Then set
\begin{equation} \label{eq:original-omega}
\Omega = -dd^c\psi.
\end{equation}
Any K{\"a}hler form defines a symplectic connection on the regular part of $\chi: \Slice \rightarrow \Sym_{2m}^0(\C)$. However, since the fibres are non-compact, one may not be able to integrate the associated horizontal vector fields to obtain parallel transport maps. This difficulty can be addressed by taking the horizontal vector fields and adding a multiple of the fibrewise Liouville vector field dual to $\Theta = -d^c\psi$. It is proved in \cite{SS} that in this way, one can get modified parallel transport maps defined on arbitrarily large compact subsets, which is enough for applications. In our case, since both $\Omega$ and $\Theta$ are $\iota$-invariant, the modified parallel transport is equivariant.

Let $\omega = d\theta$ be the restriction of $\Omega = d\Theta$ to our chosen fibre $\Y$, which lies over $t = (t_1,\dots,t_{2m}) \in \Conf_{2m}^0(\C)$. Parallel transport and vanishing cycle arguments can be used to construct certain Lagrangian submanifolds in $\Y$. Recall that a crossingless matching $\wp$ is the union of $m$ disjoint embedded closed arcs in $\C$, whose $2m$ endpoints are precisely the $t_i$. To each such $\wp$ one can associate a compact Lagrangian submanifold $L_\wp \subset \Y$.

\begin{lemma} \label{th:equivariant-vanishing}
For every $\wp$, $L_\wp \subset \Y$ is $\iota$-invariant. Moreover, there is a diffeomorphism $L_\wp \iso (S^2)^m$, under which $\iota|L_\wp$ corresponds to the involution $(p,q,r) \mapsto (q,p,r)$ of each factor $S^2 \subset \R^3$.
\end{lemma}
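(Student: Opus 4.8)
The plan is to exploit the fact that $L_\wp$ is built by parallel transport and iterated vanishing-cycle constructions, all of which can be carried out $\iota$-equivariantly. First I would recall the construction of $L_\wp$ from \cite{SS}: one chooses a path in $\Conf^0_{2m}(\C)$ that shrinks the $m$ arcs of $\wp$ so that the $2m$ marked points collide in pairs, producing a most degenerate configuration; the Lagrangian $L_\wp$ is then the iterated vanishing cycle of the corresponding nodal degeneration of $\Y$, transported back to the fibre over $t$ using the modified parallel transport maps. Since $\Omega$, $\theta$ and the fibrewise Liouville field are all $\iota$-invariant (as noted just before the lemma), the modified parallel transport is $\iota$-equivariant; and because $\iota$ acts on the base $\Conf^0_{2m}(\C)$ trivially (it only transposes the matrix entries, leaving the characteristic polynomial fixed), the degeneration and its vanishing cycles are taken to themselves by $\iota$. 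Hence $L_\wp$ is $\iota$-invariant, provided one checks that the elementary building block — the vanishing cycle of a single node — is $\iota$-invariant with the claimed local model. This reduces the whole statement to a local computation near one collision of two marked points.

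For that local model I would use Manolescu's Hilbert-scheme description, or rather its infinitesimal version. Near a single pair collision, $\Y$ looks like $\Hilb_1(\X) = \X$ for the surface $\X = \{f(x) + yz = 0\}$ with $f$ having a pair of nearby roots; and the relevant vanishing cycle is the Lefschetz thimble's boundary sphere $S^2$ inside the $A_1$-singularity $\{x^2 + yz = \epsilon\}$. Under $\iota$, which swaps $y$ and $z$, this $A_1$-surface is preserved, and the vanishing $2$-sphere $\{x^2+yz=\epsilon\}\cap\{|x|^2+|y|^2+|z|^2 = \text{const}\}$ is taken to itself. Writing the sphere with coordinates coming from $(x, y, z) = (x, w, \bar w$-type expressions$)$, the involution $y \leftrightarrow z$ becomes, in suitable real coordinates on $S^2$, the reflection that fixes a circle — i.e.\ exactly an involution of the form $(p,q,r)\mapsto(q,p,r)$ on $S^2\subset\R^3$ after relabelling axes. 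Assembling $m$ such blocks (the $m$ disjoint arcs of $\wp$ give $m$ commuting, independent vanishing cycles whose product is $L_\wp\iso (S^2)^m$), the involution acts factor-by-factor as claimed. The diffeomorphism type $L_\wp\iso(S^2)^m$ itself is already established in \cite{SS}; what is new here is tracking the $\iota$-action through that identification, so I would be careful that the identification used in \cite{SS} can be chosen equivariantly — which follows because the gluing/parallel-transport maps producing it are equivariant.

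The main obstacle I anticipate is the explicit identification of $\iota|L_\wp$ on each $S^2$ factor with the specific normal form $(p,q,r)\mapsto(q,p,r)$, as opposed to merely knowing it is \emph{some} smooth involution of $S^2$ with one-dimensional fixed set. This requires pinning down the real structure: one must compare the complex coordinates in which the vanishing sphere naturally sits against the real coordinates in which $\iota$ (transposition of $2\times2$ blocks, i.e.\ $y\leftrightarrow z$ on $\X$) is an honest reflection, and verify the fixed locus is a circle rather than empty or a point-pair — this is where one uses that on $\X^{\inv} = \{f(x)+y^2=0\}$ the vanishing cycle is a circle (the real vanishing cycle of the $A_1$ curve singularity), matching the fixed locus of $(p,q,r)\mapsto(q,p,r)$, which is $\{p=q\}\cong S^1$. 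Once the single-node model is nailed down, propagating it through the equivariant parallel transport and the equivariant iterated-gluing construction is routine, so I would concentrate the write-up on the local model and merely indicate the globalization.
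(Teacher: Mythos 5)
Your proposal is correct, and its overall structure — prove $\iota$-invariance from equivariance of the (modified) parallel transport and of the iterated vanishing-cycle construction, then identify $\iota|L_\wp$ by a local computation near a single pair collision — is the same as the paper's. The difference is in how the local model is pinned down. The paper works entirely in the matrix slice, by induction on $m$: it identifies an $\iota$-equivariant neighbourhood of the Morse--Bott critical submanifold in $\chi^{-1}(D_\epsilon)$ with $\Y' \times \C^3$, with $\chi = p^2+q^2+r^2$ and the involution acting by $(A,p,q,r) \mapsto (\iota'(A),q,p,r)$; it then takes $L_\wp$ to be the intersection of the stable set $W$ with a nearby fibre, and the normal form on the $S^2$ factor is read off ``by inspection of the construction.'' You instead use Manolescu's embedding $\Y \hookrightarrow \Hilb_m(\X)$, under which $\iota$ becomes $y\leftrightarrow z$ on $\X=\{f(x)+yz=0\}$, and identify the vanishing cycle with the $A_1$ model $\{x^2+yz=\epsilon\}$. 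This produces the same normal form — after $u=(y+z)/2$, $v=(y-z)/(2i)$ the model is $x^2+u^2+v^2$ with reflection $(x,u,v)\mapsto(x,u,-v)$, which is conjugate to $(p,q,r)\mapsto(q,p,r)$ — so your route and the paper's agree. Your version has the advantage of resting on an explicit, already-verified description of $\iota$ in Hilbert-scheme coordinates, whereas the paper's avoids the Hilbert scheme but leaves the normal form implicit. Either would serve; your last paragraph, matching the $S^1$ fixed locus in $S^2$ to the real vanishing circle of $\X^{\inv}=\{f(x)+y^2=0\}$, is exactly the consistency check one wants.
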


\begin{proof}
We need to recall the construction of the $L_\wp$ as relative vanishing cycles, which proceeds by induction on $m$ (see \cite{SS} for details). Let $\Slice'$ be the analogue of \eqref{eq:y-matrix} inside $\mathfrak{sl}_{2m-2}(\C)$, and $\chi': \Slice' \rightarrow \Sym_{2m-2}^0(\C)$ its adjoint quotient map. Fix $t' = (t_1',\dots,t_{2m-2}') \in \Conf_{2m-2}^0(\C)$, where in addition each $t_i'$ is assumed to be nonzero. Denote by $\Y'$ the fibre of $\chi'$ at that point, and by $\iota'$ the involution on it. Suppose that we have a crossingless matching $\wp'$ for $t'$, which consists of paths that avoid the origin, and let $L_{\wp'}$ be the associated Lagrangian submanifold. Next, consider the disc $D_\epsilon \subset \Sym_{2m}^0(\C)$ consisting of points $(t_1',\dots,t_{2m-2}',-\sqrt{z},\sqrt{z})$ for $|z| < \epsilon$. The restriction of $\chi$ to that disc is a holomorphic function
\begin{equation} \label{eq:chi-1}
\chi: \chi^{-1}(D_\epsilon) \longrightarrow D_\epsilon,
\end{equation}
which has a Morse-Bott nondegenerate critical submanifold in the fibre over $z = 0$. That submanifold can be canonically identified with $\Y'$. In fact, a neighbourhood of the critical submanifold inside $\chi^{-1}(D_\epsilon)$ can be identified with a neighbourhood of $\Y' \times \{0\} \subset \Y' \times \C^3$, in such a way that the map \eqref{eq:chi-1} is $(A,p,q,r) \mapsto p^2 + q^2 + r^2$. Finally, inspection of the construction shows that these identifications can be made $\iota$-equivariant, where the corresponding involution on $\Y' \times \C^3$ is
\begin{equation}
(A,p,q,r) \longmapsto (\iota'(A),q,p,r).
\end{equation}
Inside $\chi^{-1}(D_\epsilon)$, consider the subspace $W$ of all points that converge to a point of $L_{\wp'}$ under the negative gradient flow of the real part of \eqref{eq:chi-1}. That subspace is diffeomorphic to a neighbourhood of $L_{\wp'} \times \{0\} \subset L_{\wp'} \times \R^3$. Set $t = (t_1',\dots,t_{2m-2}',-\sqrt{\delta},\sqrt{\delta})$ for some small $\delta>0$, and let $\wp$ be the crossingless matching for $t$ obtained by adding an interval $[-\sqrt{\delta},\sqrt{\delta}]$ to $\wp'$. One produces $L_{\wp}$ by intersecting $W$ with the fibre of \eqref{eq:chi-1} over $\delta$.

If we make the induction assumption that $L_{\wp'}$ is $\iota'$-invariant, it follows from the construction that $L_{\wp}$ has the analogous property, and then the same holds for any crossingless matching with $2m$ strands by using parallel transport. Similarly, the proof in \cite[Section 5B]{SS} that $L_{\wp} \iso (S^2)^m$ carries through equivariantly, and implies the fact about $\iota|L_{\wp}$ stated above.
\end{proof}

Temporarily, let's specialize to the following situation. Take real numbers $\tau_1 < \cdots < \tau_m$ which add up to zero, and consider the polydisc $D^m_\epsilon \subset \Sym_{2m}^0(\C)$ consisting of points
\begin{equation} \label{eq:in-pairs}
(\tau_1 + \sqrt{z_1}, \tau_1 - \sqrt{z_1}, \dots, \tau_m + \sqrt{z_m}, \tau_m - \sqrt{z_m})
\end{equation}
for $|z_i| < \epsilon$. Let $t$ be the point obtained by taking all $z_i = \delta > 0$, where $\delta$ is small, and $\wp$ the crossingless matching which consists of the intervals $[\tau_i - \sqrt{\delta}, \tau_i + \sqrt{\delta}]$.

\begin{lemma} \label{th:framed-vanishing}
There is a contractible totally real half-dimensional submanifold $\Delta \subset \chi^{-1}(D^m_\epsilon)$ with the following property. The restriction of $\chi$ to $\Delta$ yields a map $\Delta \rightarrow [0,\epsilon)^m$, which has $(\delta,\dots,\delta)$ as a regular value. Moreover, the intersection of $\Delta$ with the fibre at that point is precisely $L_\wp$.
\end{lemma}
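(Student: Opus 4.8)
The plan is to reduce $\chi$, near the most degenerate point of the central fibre $\chi^{-1}(0)$, to the product model $(v_1,\dots,v_m)\mapsto(Q(v_1),\dots,Q(v_m))$ on a neighbourhood of the origin in $(\C^3)^m$, where $Q(p,q,r)=p^2+q^2+r^2$, and then to take $\Delta$ to be the totally real slice corresponding to $(\R^3)^m$.

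The first and only substantial step is to establish this product normal form. Over $D^m_\epsilon$ the $m$ collisions $\tau_i+\sqrt{z_i}\leftrightarrow\tau_i-\sqrt{z_i}$ take place independently of one another, since the $\tau_i$ are distinct; so I would iterate the single-collision analysis from the proof of Lemma~\ref{th:equivariant-vanishing}, peeling off one collision at a time while retaining the remaining collision parameters as extra variables. This should give, for $\epsilon$ (and then $\delta$) small, a biholomorphism, defined on a neighbourhood inside $\chi^{-1}(D^m_\epsilon)$ of the most degenerate point of $\chi^{-1}(0)$, onto a neighbourhood $U$ of $\{0\}$ in $(\C^3)^m$ --- which we may take to be a polydisc --- such that $\chi$ corresponds to $(v_1,\dots,v_m)\mapsto(Q(v_1),\dots,Q(v_m))$ and the $i$-th $\C$-factor of $(\C^3)^m$ corresponds to the coordinate $z_i$ on $D^m_\epsilon$. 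Running the same bookkeeping as in Lemma~\ref{th:equivariant-vanishing} along the iteration, one also sees that the relative vanishing cycle $L_\wp$ of the straight matching $\wp=\bigcup_i[\tau_i-\sqrt\delta,\tau_i+\sqrt\delta]$ corresponds, under this biholomorphism, to the product $\prod_{i=1}^m\{v_i\in\R^3:Q(v_i)=\delta\}$ of standard vanishing spheres.

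Granting this, I would define $\Delta$ to be the image of $(\R^3)^m\cap U$; the remaining assertions are then formal. Since $\R^{3m}\subset\C^{3m}$ is totally real, of real dimension $3m=\half\dim_\R\chi^{-1}(D^m_\epsilon)$, and (being a product of balls) contractible, and since a biholomorphism preserves all three properties, $\Delta$ is a contractible totally real half-dimensional submanifold of $\chi^{-1}(D^m_\epsilon)$. On $\Delta$ the map $\chi$ reads $(v_1,\dots,v_m)\mapsto(|v_1|^2,\dots,|v_m|^2)$, hence takes values in $[0,\epsilon)^m$ once $U$ is shrunk so that its factors have radius at most $\sqrt\epsilon$; and since $d(|v_i|^2)\neq 0$ whenever $v_i\neq 0$, every point of $(0,\epsilon)^m$ --- in particular $(\delta,\dots,\delta)$ --- is a regular value. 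Finally $(\chi|_\Delta)^{-1}(\delta,\dots,\delta)=\prod_{i=1}^m\{v_i\in\R^3:|v_i|^2=\delta\}$, which by the previous paragraph is precisely $L_\wp$.

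The main obstacle is the first step: checking that the $m$ collisions over $D^m_\epsilon$ really produce $m$ mutually independent $A_1$-degenerations of $\chi$, so that the iterated normal forms glue to the single product model $(Q,\dots,Q)$ on $(\C^3)^m$, and that the identifications can be chosen compatibly, so that the fibre over $(\delta,\dots,\delta)$ of the resulting thimble is the iterated vanishing cycle $L_\wp$ on the nose rather than merely a Lagrangian in its Hamiltonian isotopy class. This is a careful but essentially routine elaboration of Lemma~\ref{th:equivariant-vanishing}; once it is in place, everything else is bookkeeping.
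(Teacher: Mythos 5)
Your plan coincides with the paper's in its two essential steps: produce a local holomorphic chart near the most degenerate point of the central fibre over $D^m_\epsilon$ in which $\chi$ becomes the product model $(Q,\dots,Q)$ on $(\C^3)^m$, and then take $\Delta$ to be the real locus in that chart, after which the totally-real, half-dimensional, contractible, and regular-value assertions are straightforward bookkeeping as you say.

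Where you diverge — and where you are working harder than necessary — is in the last clause. You insist on arranging the chart so that its real slice meets the fibre over $(\delta,\dots,\delta)$ in $L_\wp$ \emph{on the nose}, and you flag this exact matching as ``a careful but essentially routine elaboration.'' It is not routine, and the paper does not attempt it: $L_\wp$ is defined by an iterated vanishing-cycle construction involving gradient flow for the real part of the fibration, and there is no reason an independently-produced Morse--Bott chart should reproduce that set exactly. What \cite[Lemma 30]{SS}, invoked at this point of the paper, actually gives is only that the real slice in the fibre is \emph{totally-real isotopic} to $L_\wp$. The paper then resolves the discrepancy in the opposite direction from you: instead of improving the chart, it extends this totally-real isotopy across nearby fibres and uses it to deform $\Delta$ so that its slice over $(\delta,\dots,\delta)$ becomes $L_\wp$. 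That extension is harmless because none of the properties you need (totally real, half-dimensional, contractible, $\chi|_\Delta$ submersive near the relevant fibre) is destroyed by a small totally-real isotopy. If you replace your ``on the nose'' claim with this isotopy-and-extend step, your argument matches the paper's and the gap closes.
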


\begin{proof}
There is a special point in the fibre over $(0,\dots,0) \in D^m_\epsilon$, and local coordinates $(u_1,v_1,w_1,\dots,u_m,v_m,w_m)$ on $\chi^{-1}(D_\epsilon^m)$ centered at that point, in which $\chi = (u_1^2+v_1^2+w_1^2,\dots,u_m^2+v_m^2+w_m^2)$. Take the real locus in those coordinates. Then, its intersection with the fibre over $(\delta,\dots,\delta)$ is isotopic to $L_\wp$ as a totally real submanifold inside that fibre, see \cite[Lemma 30]{SS}. One can easily extend that isotopy to yield $\Delta$ with the desired properties.
\end{proof}

Perutz (\cite{Perutz}, building on results of Varouchas \cite{V1}) provided a direct construction of K{\"a}hler forms on symmetric products, which we will now adapt to our purpose. Choose a K{\"a}hler form $\alpha$ on $\X^{\inv}$, which should be invariant under the hyperelliptic involution. Let $N \subset \Conf_m(\X^{\inv})$ be a relatively compact open subset, which we think of as a subset of $\Sym_m(\X^{\inv})$ whose closure is disjoint from the big diagonals. Then \cite[Proposition 7.1]{Perutz} there is an exact K{\"a}hler form on $\Sym_m(\X^{\inv})$, whose restriction to $N$ agrees with the product of $m$ copies of $\alpha$. We may additionally assume that this K{\"a}hler form is still invariant under the hyperelliptic involution, and choose a one-form primitive with the same property. Pull both back to $\Y^{\inv}$ via \eqref{eq:mumford}, and denote the result by $\omega_P = d\theta_P$.

For each embedded path in $\C$ whose endpoints belong to $t = (t_1,\dots,t_{2m})$, and which otherwise avoids $t$, there is an associated simple closed curve in $\X^{\inv}$, invariant under the hyperelliptic involution. In particular, a crossingless matching yields $m$ disjoint such curves, which then give rise to a torus $T_{\wp} \subset \Conf_m(\X) \cap \Y^{\inv} \subset \Y^{\inv}$ in the way familiar from Heegaard Floer theory. If the subset $N$ in the previous construction is chosen so that it contains $T_{\wp}$, that torus will be Lagrangian with respect to $\omega_P$. Since the hyperelliptic involution acts by $-1$ on $H^1(T_{\wp};\R)$ but preserves $\theta_P$, the torus is also automatically exact.

We also find it useful to mention a variant of this construction, which combines Perutz' ideas with ones of Manolescu \cite[Section 4]{Manolescu}. Choose a K{\"a}hler form $\beta$ on $\C^2$, which should be invariant under the involution
\begin{equation} \label{eq:xy}
(x,y) \longmapsto (x,-y).
\end{equation}
Let $O \subset \Conf_m(\C^2)$ be a relatively compact open subset, which we think of as a subset of $\Hilb_m(\C^2)$ whose closure is disjoint from the preimage of the big diagonal under the Hilbert-Chow map $\Hilb_m(\C^2) \rightarrow \Sym_m(\C^2)$. Again using results of Varouchas \cite{V1,V2}, one can construct a K{\"a}hler form on $\Hilb_m(\C^2)$ whose restriction to $O$ agrees with the product of $m$ copies of $\beta$. Take the product of that form with an arbitrary K{\"a}hler form on $\Sym_{2m}^0(\C) \iso \C^{2m-1}$, and pull that back via the embedding
\begin{equation} \label{eq:2emd}
\Slice^{\inv} \longrightarrow \HH \longrightarrow \Hilb_m(\C^2) \times \Sym_{2m}^0(\C)
\end{equation}
from Remark \ref{th:relative-hilb}. We denote the result by $\Omega_M$, and write it as $\Omega_M = d\Theta_M$ for some one-form $\Theta_M$. As before, we may assume that both $\Omega_M$ and $\Theta_M$ are invariant under the involution which corresponds to \eqref{eq:xy} under \eqref{eq:2emd}, concretely
\begin{equation} \label{eq:hyperelliptic-involution}
(U,V,W) \longmapsto (U,-V,W).
\end{equation}
The restrictions of $\Omega_M$ and $\Theta_M$ to $\Y^{\inv}$ are written as $\omega_M$ and $\theta_M$, respectively. By the same argument as for $\omega_P$, the tori $T_{\wp}$ are exact Lagrangian with respect to $\omega_M$, provided that the subset $O$ has been chosen sufficiently large.

Both $\omega_P$ and $\omega_M$ share the common feature of being in product form on a large subset of $\Y^{\inv} \subset \Sym_m(\X^{\inv})$ which is disjoint from the big diagonal. Also, both forms extend to $\Sym(\X^{\inv})$, but the extension of $\omega_P$ is still exact, while that of $\omega_M$ is not (for the last-mentioned fact, see work in progress by Lekili-Perutz). On the other hand, $\omega_M$ comes from a K{\"a}hler form defined on the whole of $\Slice^{\inv}$, while $\omega_P$ does not.

\begin{lemma} \label{th:interpolate}
Fix a crossingless matching $\wp$. There is an isotopy of embedded tori $L_r \subset \Y^{\inv}$ interpolating between $L_0 = L_{\wp}^{\inv}$ and $L_1 = T_{\wp}$. Moreover, each $L_r$ is exact Lagrangian with respect to an appropriate exact symplectic form $\omega_r = d\theta_r$.
\end{lemma}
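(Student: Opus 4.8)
The plan is to reduce the statement to the purely smooth assertion that $L_\wp^{\inv}$ and $T_\wp$ are ambient isotopic inside $\Y^{\inv}$, and then to carry the symplectic structure along the isotopy, which makes the rest of the conclusion automatic. Write $\omega = d\theta$ for the restriction to $\Y$ of the exact K\"ahler form $\Omega$. Since $\iota$ is a symplectic involution, $\Y^{\inv}$ is a complex, hence symplectic, submanifold, and $\omega|\Y^{\inv} = -dd^c(\psi|\Y^{\inv})$ is an exact Stein K\"ahler form; moreover $L_\wp^{\inv} = L_\wp \cap \Y^{\inv}$ is automatically Lagrangian in it, and exact because $L_\wp$ is exact in $\Y$ (restrict a primitive of $\theta|L_\wp$). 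Granting a compactly supported smooth isotopy $\{\Phi_r\}_{r \in [0,1]}$ of diffeomorphisms of $\Y^{\inv}$ with $\Phi_0 = \mathrm{id}$ and $\Phi_1(L_\wp^{\inv}) = T_\wp$, I would put $L_r = \Phi_r(L_\wp^{\inv})$, $\omega_r = (\Phi_r^{-1})^*(\omega|\Y^{\inv})$ and $\theta_r = (\Phi_r^{-1})^*(\theta|\Y^{\inv})$. Each $L_r$ is then exact Lagrangian for $\omega_r = d\theta_r$, with $L_0 = L_\wp^{\inv}$ and $L_1 = T_\wp$; and since $\Phi_r$ is compactly supported, every $\omega_r$ coincides with the Stein form $\omega|\Y^{\inv}$ outside one fixed compact set, so it is still convex at infinity. (If a specific endpoint form such as $\omega_P$ or $\omega_M$ were wanted, one could afterwards interpolate $(1-s)\,\omega_1 + s\,\omega_P$ through exact K\"ahler forms and run a Moser argument, but the statement does not require this.)

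Everything thus reduces to the ambient isotopy. Both tori are products of circles lying over the arcs $\wp_1,\dots,\wp_m$ of $\wp$: by definition $T_\wp = \gamma_1 \times \cdots \times \gamma_m$ with $\gamma_i \subset \X^{\inv}$ the simple closed curve double-covering $\wp_i$ and branched over its endpoints, while by Lemma~\ref{th:equivariant-vanishing} $L_\wp \iso (S^2)^m$ with $\iota$ acting by $(p,q,r) \mapsto (q,p,r)$ on each sphere, so $L_\wp^{\inv} \iso (S^1)^m$. To compare them I would first use equivariant parallel transport along a path in $\Conf^0_{2m}(\C)$ to reduce to the special configuration of \eqref{eq:in-pairs}, in which the arcs of $\wp$ are the straight intervals $[\tau_i - \sqrt\delta, \tau_i + \sqrt\delta]$: parallel transport is $\iota$-equivariant and is a symplectomorphism on arbitrarily large compact sets, carrying $\omega$ and $L_\wp$ along together, and it carries $T_\wp$ to the corresponding torus up to the very isotopies we are trying to build — as a smooth submanifold $T_\wp$ depends only on the isotopy class of $\wp$ relative to the branch points, which parallel transport respects. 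In that special configuration I would invoke the $\iota$-equivariant version of Lemma~\ref{th:framed-vanishing}: on fixed loci it exhibits $L_\wp^{\inv}$, up to isotopy, as $\prod_i \{\, 2u_i^2 + w_i^2 = \delta,\; u_i, w_i \in \R \,\}$ in the local coordinates there, where $\chi$ reads $(u_1^2 + v_1^2 + w_1^2, \dots)$ and $\iota$ swaps $u_i \leftrightarrow v_i$. By Remark~\ref{th:relative-hilb} the family $\chi^{-1}(D^m_\epsilon)^{\inv} \to D^m_\epsilon$ is (an open subset of) the relative $m$-fold symmetric product of the hyperelliptic curves $\{\prod_i((x-\tau_i)^2 - z_i) + y^2 = 0\}$, which acquires a node at $(\tau_i, 0)$ as $z_i \to 0$, and at that node $\gamma_i$ collapses; hence the circle $\{\, 2u_i^2 + w_i^2 = \delta \,\}$ is precisely the vanishing cycle of the $i$-th node, i.e.\ is isotopic in $\X^{\inv}$ to $\gamma_i$. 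So $L_\wp^{\inv}$ and $T_\wp = \prod_i \gamma_i$ are ambient isotopic in $\Sym_m(\X^{\inv})$, and because the $\gamma_i$ project to pairwise disjoint arcs the isotopy can be kept inside the open subset $\Y^{\inv}$, away from the big diagonal and from divisors containing a hyperelliptic fibre $(x,y)+(x,-y)$.

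The main obstacle is precisely this geometric identification: unwinding the iterated, $\iota$-equivariant relative-vanishing-cycle construction of $L_\wp$ (Lemma~\ref{th:equivariant-vanishing}) carefully enough to see that, restricted to the fixed locus, it produces the product of the circles $\gamma_i$ up to isotopy, and then controlling the isotopy so that it never leaves $\Y^{\inv}$. The reduction to the special configuration is routine, but the bookkeeping relating $T_\wp$ — defined via the auxiliary forms $\omega_P$, $\omega_M$ rather than $\omega$ — to its image under parallel transport in the family $\chi$ requires a little care; this is absorbed by the observation that the statement only concerns $T_\wp$ as a smooth submanifold together with \emph{some} compatible exact symplectic form, so the freedom to change the form covers the discrepancy. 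Once the isotopy is established, the first paragraph completes the proof.
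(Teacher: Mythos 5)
Your reduction in the first paragraph is, as a matter of pure logic, enough to prove the stated lemma: given a compactly supported ambient isotopy $\Phi_r$ carrying $L_\wp^{\inv}$ to $T_\wp$, the pushed-forward forms $\omega_r = (\Phi_r^{-1})^*(\omega|\Y^{\inv})$ do satisfy the conclusion literally. But this is a genuinely different strategy from the paper's, and it quietly drops the feature that makes the lemma useful. The paper does not prove the existence of some abstract smooth isotopy and then dress it up; rather, it constructs $L_r$ directly as the iterated relative vanishing cycle for the linearly interpolated family of K\"ahler forms $(1-r)\Omega|\Slice^{\inv} + r\,\tilde\Omega_M$, so that the isotopy terminates at a Lagrangian for (a rescaling of) $\omega_M$, the specific form for which $T_\wp$ is a priori exact Lagrangian and for which the Heegaard Floer comparison is set up. The auxiliary step of passing from $\Omega_M$ to the compactly modified $\tilde\Omega_M$ is not cosmetic: it is what gives contact-type level sets for the whole interpolating family, hence controlled parallel transport and well-defined Floer theory at every intermediate stage in a non-compact ambient manifold.

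Your approach produces an endpoint form $\omega_1 = (\Phi_1^{-1})^*(\omega|\Y^{\inv})$ which is neither $\omega_P$ nor $\omega_M$, and your suggested repair — interpolate $(1-s)\omega_1 + s\,\omega_P$ and run Moser — does not obviously work: $\omega_1$ is K\"ahler only for the pushed-forward complex structure $(\Phi_1^{-1})_*J$, so the linear interpolation with the $J$-K\"ahler form $\omega_P$ has no reason to stay nondegenerate, and even if it did you would still have to check convexity at infinity for the whole family. Since Lemma~\ref{th:hfhf} uses precisely the invariance of Floer cohomology along this interpolation ending at the Heegaard picture, this is a real gap relative to the paper's intended use, even though the lemma as stated is technically satisfied.

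Separately, the parallel transport reduction to the configuration \eqref{eq:in-pairs} is under-argued on your own terms. Rescaled parallel transport in $\chi$ carries $L_\wp$ to $L_{\wp'}$ by construction, but $T_\wp$ is defined intrinsically in $\Sym_m(\X^{\inv})$ via $\omega_P$, not via the family $\chi$; the assertion that ``parallel transport carries $T_\wp$ to the corresponding torus up to the very isotopies we are trying to build'' is exactly the circularity you need to break, not invoke. The paper sidesteps this by working with the relative Hilbert scheme embedding of Remark~\ref{th:relative-hilb} and the fibered form $\tilde\Omega_M$ on all of $\Slice^{\inv}$, so that parallel transport and the symmetric-product picture live in one family. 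Your local identification of the fixed part of the vanishing sphere with the hyperelliptic vanishing circle $\gamma_i$ is correct and in the same spirit as the paper's use of Lemma~\ref{th:framed-vanishing} and Manolescu's Proposition 4.3, but without the rest of the scaffolding it only handles the model configuration.
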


\begin{proof}
Suppose first that our chosen point $t$ is of the form \eqref{eq:in-pairs}, where $\tau \in \Conf_m^0(\C)$ and the $z_k \in \C^*$ are small, and that $\wp$ consists of the union of the straight paths joining $\tau_k \pm \sqrt{z_k}$. Recall that $L_{\wp}^{\inv}$ is constructed as an iterated vanishing cycle for $\Omega|\Slice^{\inv}$. In this particular case (compare \cite[Lemma 30]{SS} or Lemma \ref{th:framed-vanishing} above), the vanishing cycle construction can be carried out entirely within a neighbourhood of a particular point in $\Y^{\inv}$, whose image under $\Slice^{\inv} \rightarrow \HH \rightarrow \Hilb_m(\C^2) \rightarrow \Sym_m(\C^2)$ is $(x_1,y_1) = (\tau_1,0), \dots, (x_m,y_m) = (\tau_m,0)$. Importantly, that point does not lie on the big diagonal. It is then easy to see that if we use $\Omega_M$ instead, we get an iterated vanishing cycle which lies in $\Conf_m(\X^{\inv}) \cap \Y^{inv}$, and which is the product of $m$ disjoint simple closed curves invariant under the hyperelliptic involution, isotopic to the preimages of the arcs in $\wp$. Such a torus is then necessarily exact Lagrangian isotopic to $T_{\wp}$ (see \cite[Proposition 4.3]{Manolescu}, where a parallel argument is carried out in the context of the whole space $\Y$ rather than the fixed point set $\Y^{\inv}$). Interpolating linearly between $\Omega|\Slice^{\inv}$ and $\Omega_M$ then yields a family of vanishing cycles which provides the rest of the desired isotopy.

In order to derive the general case from this, we'd like to use parallel transport, which means that we again have to modify our K{\"a}hler forms. Take a function $h: \R \rightarrow \R$ such that $h''(\rho) \geq 0$ everywhere, and
\begin{equation}
h'(\rho) \begin{cases} = 0 & \rho \leq R_0, \\ > 0 & R_0 < \rho < R_1 \\ = 1 & \rho \geq R_1.
\end{cases}
\end{equation}
where $R_0 < R_1$ are large positive constants. Replacing $\psi$ by $h(\psi)$ in \eqref{eq:original-omega} yields a two-form which is nonnegative on complex tangent lines, which vanishes on a large compact subset, and which outside an even larger compact subset agrees with $\Omega$. Following \cite[Equation (23)]{Manolescu} we take
\begin{equation}
\tilde\Omega_M = \epsilon \Omega_M - dd^c h(\psi)|\Slice^{\inv}
\end{equation}
for some small $\epsilon>0$. To understand the properties of this form, consider first its restriction $\tilde{\omega}_M$ to $\Y^{\inv}$. On a large compact subset $\tilde\omega_M = \epsilon \omega_M$, hence the tori $T_{\wp}$ are still Lagrangian (and exact Lagrangian, if one uses the obvious one-form primitive). On the other hand, at infinity $\tilde\omega_M = \epsilon \omega_M + \omega|\Y^{\inv}$ is close to $\omega|\Y^{\inv}$. Fix a sufficiently large level set of the original plurisubharmonic function $\psi|\Y^{\inv}$. This is regular, by \cite[Lemma 41]{SS}, and of contact type with respect to $\omega|\Y^{\inv}$. Hence, it is also of contact type for $\tilde{\omega}_M$ if we choose $\epsilon$ sufficiently small. One can arrange that the same holds not just in a single fibre, but over a finite path in the base of the fibration $\Slice^{\inv} \cap \chi^{-1}(\Conf_{2m}^0(\C)) \rightarrow \Conf_{2m}^0(\C)$. This is enough to define symplectic parallel transport along this path, at least on a large compact subset of the fibres which contains our Lagrangian tori (compare \cite[Section 6a]{khovanov-seidel}). Moreover, the same will be true for the family of symplectic forms which interpolates linearly between $\Omega|\Slice^{\inv}$ and $\tilde{\Omega}_M$.

With this in mind, the strategy for proving the general case is as follows. We find a path in $\Conf_{2m}^0(\C)$ which connects our given $t = t_0$ to some other point $t_1$ of the form \eqref{eq:in-pairs}, and a smooth family $\wp_s$ of crossingless matchings with endpoints $t_s$, which deforms $\wp = \wp_0$ to the collection of straight paths $\wp_1$ joining the points of $t_1$ in the way described before. Each fibre $\Y_s^{\inv} = \chi^{-1}(t_s) \cap \Slice^{\inv}$ can be equipped with the one-parameter family of K{\"a}hler forms $\omega_{r,s}$ obtained by restricting $(1-r)\Omega|\Slice^{\inv} + r \tilde\Omega_M$. For $s = 1$ and arbitrary $r$, we have a family of tori in $\Y_s^{\inv}$ which are Lagrangian with respect to $\omega_{r,s}$ (this is the isotopy provided by the previously proved special case). On the other hand, the Heegaard Floer construction provides another family of tori, for arbitrary $s$ and $r = 1$. We pull all these tori back to $\Y^{\inv} = \Y_0^{\inv}$ by symplectic parallel transport. This yields an isotopy connecting $T_{\wp}$ with $L_{\wp}^{\inv}$ (since the latter is by definition compatible with parallel transport for $\Omega|\Slice^{\inv}$).
\end{proof}

\subsection{Floer cohomology}

The spaces $\Y$ are smooth affine varieties, hence satisfy the convexity condition from Section \ref{subsec:borel-floer} (take a sequence of balls in $\Slice \iso \C^{4m-1}$, and intersect it with $\Y$ to form the required exhaustion). Moreover, the symplectic form $\omega$ is exact, and the Lagrangian submanifolds $L_\wp$ are compact and (since they are simply-connected) necessarily exact. Clearly, the same property carries over to the fixed parts $L_{\wp}^{\inv}$ inside $\Y^{\inv}$. Hence, for any two crossingless matchings $\wp_\pm$, we have a well-defined Floer cohomology $HF(L_{\wp_+},L_{\wp,-})$, as well as the equivariant version $HF_{\mathrm{borel}}(L_{\wp_+},L_{\wp_-})$, and finally Floer cohomology taken inside the fixed locus, $HF(L_{\wp_+}^{\inv},L_{\wp_-}^{\inv})$.

\begin{lemma} \label{th:stablynormalexists}
The pair $(L_{\wp_+},L_{\wp_-})$ in $\Y$ has stably trivial normal structure (in fact, there is a preferred homotopy class of stable normal trivializations).
\end{lemma}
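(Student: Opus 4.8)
The plan is to produce the trivialization of the normal bundle from the ambient affine‑space picture, and the interpolating Lagrangian subbundles $\Lambda_0,\Lambda_1$ of Definition \ref{definition:stablenormal} from the vanishing‑cycle construction of the $L_\wp$.

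\emph{The normal bundle.} Recall that $\Y=\chi^{-1}(t)$ is a smooth subvariety of the complex vector space $\Slice$, on which $\iota$ acts \emph{linearly} by block transposition, and that $\chi$ is $\iota$‑invariant because transposition fixes the characteristic polynomial. Hence $N_{\Y/\Slice}$ is trivialized $\iota$‑equivariantly by $d\chi$, with $\iota$ acting trivially on the trivialization, so $(N_{\Y/\Slice})^{\anti}=0$. Restricting the exact sequence $0\to T\Y\to T\Slice|_\Y\to N_{\Y/\Slice}\to 0$ to $\Y^{\inv}$ and splitting off anti‑invariant parts, the normal term drops out and $T\Y^{\anti}\iso(T\Slice)^{\anti}|_{\Y^{\inv}}=\Slice^{\anti}$, the constant bundle with fibre the $(-1)$‑eigenspace of $\iota$ on $\Slice$, of complex rank $n_{\anti}=m$. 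Thus $T\Y^{\anti}$ is already trivial; fix the resulting trivialization $\phi$ (no stabilization is needed, $N=0$). Chasing determinants through the same sequences shows $\det\phi$ is, up to a nonzero constant, the algebraic volume form of Remark \ref{th:pole} divided by its counterpart on $\Y^{\inv}$.

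\emph{The Lagrangian data.} The datum $(\Lambda_0,\Lambda_1)$ decouples: $\Lambda_0$ lives over $[0,1]\times L_{\wp_+}^{\inv}$ and $\Lambda_1$ over $[0,1]\times L_{\wp_-}^{\inv}$, the only common ingredient is $\phi$, and $e^{it\pi/2}\R^m$ joins $\R^m$ to $i\R^m$ through Lagrangian subspaces, so it suffices to show that for every crossingless matching $\wp$ the Lagrangian subbundle $TL_\wp^{\anti}$ is homotopic, through Lagrangian subbundles of $(T\Y^{\anti},\phi)$ over $L_\wp^{\inv}$, to the constant $\phi^{-1}(\R^m)$. The primary obstruction is the Maslov class $\mu(TL_\wp^{\anti})\in H^1(L_\wp^{\inv};\Z)$, and it vanishes: $L_\wp\iso(S^2)^m$ is simply connected and $\Y$ has a holomorphic volume form, so $L_\wp$ is graded; restricting the grading to $L_\wp^{\inv}$ and using that $\det\phi$ is compatible with this volume form on determinants gives $\mu(TL_\wp^{\anti})=-\mu(L_\wp^{\inv}\text{ in }\Y^{\inv})$. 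Now $L_\wp^{\inv}$ is isotopic (Lemma \ref{th:interpolate}) to the torus $T_\wp$, which over the diagonal‑free region it inhabits is a product of matching cycles $\gamma_j\subset\X^{\inv}$ of the hyperelliptic double cover $x\colon\X^{\inv}\to\C$; each $\gamma_j$ is graded with respect to the nowhere‑zero abelian differential $dx/2y$, and the comparison of that external‑product volume form with the algebraic one on $\Y^{\inv}$ is exactly what Lemma \ref{th:topology} and Remark \ref{th:pole} control, so $\mu(T_\wp)=0$. To kill the remaining obstructions and actually produce $\Lambda_\wp$, I appeal to the explicit local model: by Lemma \ref{th:framed-vanishing}, for $t$ of the form \eqref{eq:in-pairs} and $\wp$ the straight matching there are coordinates $(u_i,v_i,w_i)$ with $\chi=(u_i^2+v_i^2+w_i^2)_i$ in which $\iota$ swaps $u_i\leftrightarrow v_i$, $L_\wp$ is the product of the real spheres, $T\Y^{\anti}$ is spanned along $\Y^{\inv}$ by the constant fields $\partial_{u_i}-\partial_{v_i}$, and $TL_\wp^{\anti}$ is precisely the constant real subbundle they span; so in this chart the normal trivialization is the evident one and $\Lambda_\wp$ is constant. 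One then straightens an arbitrary $\wp$ to the model configuration by an isotopy of endpoints and carries everything back by the equivariant symplectic parallel transport of Lemma \ref{th:interpolate} and \cite{SS}. Finally one assembles $\Lambda_0,\Lambda_1$ from the framings of $L_{\wp_+}$ and $L_{\wp_-}$ (rotating the latter to end at $i\R^m$); the preferred homotopy class is the one pinned down by $\phi$ and the canonical gradings.

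I expect the main obstacle to be the last step: reconciling the \emph{local}, manifestly standard trivialization near each $L_\wp$ coming from Lemma \ref{th:framed-vanishing} with the \emph{global} algebraic trivialization $\phi$ — equivalently, showing that all obstructions in $[L_\wp^{\inv},U/O]\iso[(S^1)^m,U/O]$ beyond the Maslov class vanish. The Maslov class is handled as above; the higher ones must be controlled by tracking, along the equivariant parallel transport, the transition function between the holomorphic trivialization $\phi$ and the merely symplectic transported one, and it is here that the topological description of $\Y^{\inv}$ provided by Lemmas \ref{th:aj} and \ref{th:topology} is used to see that the relevant transition classes vanish.
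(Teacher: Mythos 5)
Your opening observation is a clean improvement over the paper's phrasing: because $d\chi$ is $\iota$-equivariant and $\iota$ acts trivially on the base, the normal bundle $N_{\Y/\Slice}$ has no anti-invariant part, so $T\Y^{\anti}$ is literally (not just stably) a constant bundle $\underline{\Slice^{\anti}}\iso\underline{\C^m}$. The paper contents itself with the stable statement, which suffices for Definition~\ref{definition:stablenormal}.

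However, there is a genuine gap in the remainder, and you already put your finger on it. You propose to build $\Lambda_0,\Lambda_1$ by obstruction theory over $L_\wp^{\inv}\iso T^m$, computing the Maslov class from the holomorphic volume form and then trying to kill the higher obstructions in $[T^m,U/O]$ by appealing to the explicit Picard--Lefschetz chart of Lemma~\ref{th:framed-vanishing} plus parallel transport. The step you label ``the main obstacle'' --- matching the local trivialization coming from that chart against the global algebraic one $\phi$ --- is exactly where the argument is not closed: you assert that Lemmas~\ref{th:aj} and~\ref{th:topology} control the relevant transition classes, but those lemmas are about $c_1$ and the divisor at infinity of $\Sym_m(\X^{\inv})$, not about framings of Lagrangian subbundles along $L_\wp^{\inv}$; they do not, by themselves, say anything about the higher components of $[T^m,U/O]$ (e.g.\ $H^2(T^m;\Z/2)$ and beyond). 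As written, this step is a hope rather than a proof.

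The paper sidesteps the obstruction computation entirely by making the totally real manifold $\Delta$ of Lemma~\ref{th:framed-vanishing} do all the work at once. Since $\Delta\subset\chi^{-1}(D^m_\epsilon)$ is contractible and totally real, $T\Delta$ is trivial, and the short exact sequence $0\to TL_\wp\to T\Delta|_{L_\wp}\to\R^m\to 0$ frames $TL_\wp$; complexifying and comparing with the ambient sequence $0\to T\Y|_{L_\wp}\to T(\chi^{-1}(D^m_\epsilon))|_{L_\wp}\to\C^m\to 0$ (the diagram~\eqref{eq:triv-triv}) shows that this framing of $TL_\wp\otimes\C$ is canonically homotopic to the stable trivialization of $T\Y|_{L_\wp}$ coming from $d\chi$. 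No obstruction has to be computed: compatibility is forced because both trivializations are induced by splitting the \emph{same} exact sequence, and the space of such splittings is contractible. Running this equivariantly and restricting to fixed-point sets and anti-invariant directions then hands you $\phi$, $\Lambda_0$, $\Lambda_1$ simultaneously, with the parallel-transport invariance and the ``preferred homotopy class'' coming for free from contractibility of the splitting spaces. To repair your argument along the lines of the paper, replace the obstruction-theoretic middle of the proof with this direct use of $\Delta$ and the diagram chase; your first paragraph and the use of parallel transport to reduce to $\wp$ of type~\eqref{eq:in-pairs} can be kept.
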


\begin{proof}
Since $\Y$ is the regular fibre of a map $\C^{4m-1} \rightarrow \C^{2m-1}$, its tangent bundle carries a stable trivialization, which is unique up to homotopy (since it depends on splitting a short exact sequence of complex differentiable vector bundles). The same argument shows that parallel transport maps respect the stable trivialization, up to canonical homotopy.

Consider a crossingless matching $\wp$ as in Lemma \ref{th:framed-vanishing}, and let $\Delta$ be the totally real submanifold constructed there. We have a commutative diagram of vector bundles over $L_{\wp}$,
\begin{equation} \label{eq:triv-triv}
\xymatrix{
0 \ar[r] & \ar[d]^{\iso} TL_{\wp} \otimes_\R \C \ar[r] & \ar[d]^{\iso} (T\Delta|L_{\wp}) \otimes_\R \C \ar[r] & L_{\wp} \times \C^m \ar[r] \ar[d]^{=} & 0 \\
0 \ar[r] & T\Y|L_{\wp} \ar[r] & T(\chi^{-1}(D^m_\epsilon))|L_{\wp} \ar[r] & L_{\wp} \times \C^m \ar[r] & 0
}
\end{equation}
Since $\Delta$ is contractible, $T\Delta$ is trivial, and this induces a stable trivialization of $TL_{\wp}$. Diagram-chasing in \eqref{eq:triv-triv} shows that the complexification of this trivialization is canonically homotopic to the trivialization of $T\Y|L_{\wp}$ constructed before. Because of our previous remarks about parallel transport, the same is true for any crossingless matching.

The same argument goes through equivariantly. More explicitly, $T\Y$ has an equivariant stable trivialization, $TL_{\wp}$ has an equivariant stable trivialization, and under the natural isomorphism $TL_{\wp} \otimes \C \iso T\Y|L_{\wp}$, there is an equivariant homotopy between these two trivializations, which in turn is determined uniquely up to homotopies. Restriction to fixed point sets and to the anti-invariant directions there now yields the desired normal trivializations.
\end{proof}

Our general theory from Section \ref{subsec:Floerlocalize} now provides a localization map
\begin{equation} \label{eq:lo}
HF_{\mathrm{borel}}(L_{\wp_+}, L_{\wp_-}) \longrightarrow HF(L_{\wp_+}^{\inv}, L_{\wp_-}^{\inv})[[q]],
\end{equation}
which becomes an isomorphism after inverting $q$, and an inequality of ranks
\begin{equation} \label{eq:smith-applied}
\dim\, HF(L_{\wp_+}, L_{\wp_-}) \geq \dim\, HF(L_{\wp_+}^{\inv}, L_{\wp_-}^{\inv}).
\end{equation}

\begin{lemma} \label{th:hfhf}
$HF(L_{\wp_+}^{\inv},L_{\wp_-}^{\inv}) \iso HF(T_{\wp_+},T_{\wp_-})$.
\end{lemma}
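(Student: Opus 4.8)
The plan is to promote the geometric isotopy of Lemma~\ref{th:interpolate} to an isomorphism of Floer groups by a continuation-map argument. First I would apply Lemma~\ref{th:interpolate} to both crossingless matchings $\wp_+$ and $\wp_-$. Inspecting its proof, the relevant one-parameter families of exact symplectic forms are restrictions to the fibres of the matching-independent family $(1-r)\,\Omega|\Slice^{\inv} + r\,\tilde\Omega_M$; the only matching-dependent ingredients are the resulting families of Lagrangian tori and the auxiliary symplectic parallel transport used to reduce to the special position \eqref{eq:in-pairs}. Choosing that parallel transport compatibly --- or, alternatively, running the lemma only for $\wp_+$ and then dragging $L_{\wp_-}^{\inv}$ along the same family of forms by a Moser argument (legitimate because the forms are exact and agree with a fixed model near infinity) --- I would obtain a single smooth family $\omega_r = d\theta_r$ of exact symplectic forms on $\Y^{\inv}$, of contact type along one large fixed hypersurface, together with families $L_r^{\pm}$ of $\omega_r$-exact Lagrangian tori with $L_0^{\pm} = L_{\wp_\pm}^{\inv}$ and $L_1^{\pm} = T_{\wp_\pm}$ (the latter Lagrangian for $\omega_1 = \tilde\omega_M$).

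Next I would invoke the standard invariance of Lagrangian Floer cohomology: for exact Lagrangians in an exact symplectic manifold convex at infinity, $HF(A_0,A_1)$ is unchanged under a simultaneous deformation of the ambient exact symplectic form together with exact isotopies of $A_0$ and $A_1$, provided the family stays convex at infinity so that the usual a priori energy and compactness estimates go through. The contact-type condition arranged in the proof of Lemma~\ref{th:interpolate} supplies exactly this control, so the continuation maps associated to $(\omega_r, L_r^+, L_r^-)$ are defined and yield $HF(L_{\wp_+}^{\inv}, L_{\wp_-}^{\inv}) \iso HF(T_{\wp_+}, T_{\wp_-})$, the right-hand side computed with respect to $\tilde\omega_M$. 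A final, easier continuation argument --- now keeping the tori fixed and interpolating between $\tilde\omega_M$ and $\omega_M$, which agree on a large compact set containing the $T_{\wp_\pm}$ and are both tame at infinity --- identifies this with the Floer cohomology of the tori as defined in the Heegaard-Floer picture.

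The main obstacle I anticipate is organizational rather than conceptual: arranging that the isotopies for $\wp_+$ and $\wp_-$ live over a common family of symplectic forms with uniform convexity at infinity along the whole deformation, so that the continuation maps are genuinely well-defined in this non-compact (but exact) setting. Everything else --- bookkeeping the primitives $\theta_r$ that make the $L_r^{\pm}$ exact, and checking that the interpolated forms remain symplectic --- is routine given the estimates already established in the proof of Lemma~\ref{th:interpolate}.
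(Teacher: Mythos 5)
Your proposal matches the paper's own (very terse) proof: apply Lemma~\ref{th:interpolate} simultaneously to $\wp_+$ and $\wp_-$ over one common family of exact symplectic forms $(1-r)\,\Omega|\Slice^{\inv} + r\,\tilde\Omega_M$, and then invoke standard invariance/continuation for exact Lagrangian Floer cohomology in a convex exact setting. The paper compresses the whole argument to ``one checks easily that this can be done'' plus ``standard invariance properties''; your write-up just spells out the same steps.
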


This follows from Lemma \ref{th:interpolate} applied to both crossingless matchings (one checks easily that this can be done, meaning that the symplectic forms can be taken to be the same in both isotopies), and standard invariance properties of Floer cohomology. Note that by construction, $\omega_P$ and $\theta_P$ extend to $\Sym_m(\X^{\inv})$. Hence, we can also form the Floer complex of the $T_{\wp_\pm}$ inside that larger space. We will denote the differential which defines this theory by $\bar{d}$, and the resulting cohomology by $\overline{HF}(T_{\wp_+},T_{\wp_-})$.

Up to now we have worked with ungraded groups, but the next step will require at least relative gradings. Both $\Y$ and $\Y^{\inv}$ admit algebraic volume forms (since they are complete intersections inside affine spaces). Since the $L_{\wp}$ are simply-connected, they admit gradings, so $HF(L_{\wp_+},L_{\wp_-})$ is $\Z$-graded in a way which is unique up to a constant. Through the stable normal trivialization, the fixed parts $L_{\wp}^{\inv}$ inherit gradings, so $HF(L_{\wp_+}^\inv,L_{\wp_-}^\inv)$ is again $\Z$-graded up to a constant. This carries over to $HF(T_{\wp_+},T_{\wp_-})$ via Lemma \ref{th:hfhf}, but it is no longer true for $\overline{HF}(T_{\wp_+},T_{\wp_-})$. Namely, consider a Floer trajectory $u$ which contributes to $\bar{d}$, having endpoints $x_\pm$, and which intersects the divisor $D$ with total multiplicity $k$. One can arrange that the almost complex structures used are standard near $D$, so that necessarily $k \geq 0$, with equality iff $u$ avoids $D$ altogether. Because of the behaviour of the algebraic volume form, see Remark \ref{th:pole}, the difference between the indices of $x_\pm$ in the given grading is $2k + 1$. In other words, one can write $\bar{d} = d_0 + d_1 + \cdots$, where $d_k$ has degree $2k + 1$, and where the $k = 0$ term is the differential which yields $HF(T_{\wp_+},T_{\wp_-})$. A standard filtration and spectral sequence argument then shows that:
\begin{equation} \label{eq:smith-added}
\dim\, HF(T_{\wp_+}, T_{\wp_-}) \geq \dim\, \overline{HF}(T_{\wp_+}, T_{\wp_-}).
\end{equation}

\subsection{Link invariants}

Fix the crossingless matching $\wp$ corresponding to a sequence of $m$ nested horseshoes lying in the upper half-plane, with endpoints $t = (t_1,\dots,t_{2m})$ on the real line. As discussed before, we can associate to this a Lagrangian submanifold $L_{\wp} \subset \Y$. A braid $\beta$ on $m$ strands gives rise to a loop $\beta \times \id \in \mathit{Br}_{2m}=\pi_1(\Conf_{2m}^0(\C))$ based at $t$. Hence, via (rescaled) parallel transport maps, we get another Lagrangian submanifold, denoted by $(\beta \times \id)(L_{\wp})$ for simplicity. On the other hand, we can form the closure of $\beta$, by taking the braid described by $\beta \times \id$ in $\R^2 \times [0,1]$, and capping strands with a copy of $\wp$ above and its reflection below. This yields an oriented link $\kappa = \kappa_\beta$. Recall the following from \cite{SS}:

\begin{definition} \label{def:kh-symp}
The symplectic Khovanov cohomology $\Kh_{\mathrm{symp}}^*(\kappa)$ is the Lagrangian Floer cohomology $HF^{*+m+w}(L_{\wp}, (\beta \times \id)(L_{\wp}))$.
\end{definition}

Here, we turn $L_{\wp}$ into a graded Lagrangian submanifold, and then use the canonical lifts of monodromy maps to graded symplectic diffeomorphisms to get an induced grading of $(\beta \times \id)(L_{\wp})$. These determine an absolute grading of Floer cohomology, which is independent of all choices. We shift this grading by the number of strands and by the writhe $w$ of $\beta$. The main theorem of \cite{SS} is that this group is invariant under the Markov moves, and hence indeed depends only on the oriented link itself. The proof of this theorem goes over routinely to the equivariant case, and shows that the equivariant symplectic Khovanov cohomology, defined as
\begin{equation}
\Kh_{\mathrm{symp,eq}}^*(\kappa) \stackrel{\mathrm{def}}{=} HF^{*+m+w}_{\mathrm{borel}}(L_{\wp}, (\beta \times \id)(L_{\wp})),
\end{equation}
is an oriented link invariant. Note that unlike the case of $\Kh_{\mathrm{symp}}$ itself, we do not at present have a conjectural analogue for this construction in the framework of combinatorial Khovanov cohomology. Similarly, we can work with the fixed parts of our Lagrangian submanifolds, and define a fixed part symplectic Khovanov cohomology
\begin{equation} \label{eq:kh-fixed}
\Kh_{\mathrm{symp,inv}}^*(\kappa) \stackrel{\mathrm{def}}{=} HF^{*+(m+w)/2}(L_{\wp}^{\inv}, (\beta \times \id)(L_{\wp}^{\inv})).
\end{equation}
Again, direct imitation of the arguments of \cite{SS} implies this is an oriented link invariant. Note that the grading is now either in $\Z$ or $\Z + \frac{1}{2}$, depending on the parity of $m+w$ (which is the number of components of $\kappa$ modulo $2$). To explain the occurrence of $(m+w)/2$, recall that the grading shift in the original construction of $\Kh_{\mathrm{symp}}$ arose when discussing invariance under Markov $II^-$ moves, see \cite[Lemma 57]{SS}, and had to do with the Morse index of a local maximum of a Morse function on $S^2$. In the fixed point theory, the analogous discussion applies to a Morse function on the circle $S^1$. What one finds is that if braids $\beta \in Br_m$ and $\tilde\beta_\pm = \sigma_m^{\pm 1}\beta \in Br_{m+1}$ are related by adding a positive respectively negative half-twist in the additional strand, and if $\wp$ respectively $\tilde{\wp}$ denote the standard crossingless matchings, then
\begin{equation}
\begin{aligned}
& HF^*(L_{\wp}^{\inv}, (\beta \times \id)(L_{\wp}^{\inv})) \iso HF^*(L_{\tilde{\wp}}^{\inv}, (\tilde\beta_+ \times \id)(L_{\tilde{\wp}}^{\inv})), \\
& HF^*(L_{\wp}^{\inv}, (\beta \times \id)(L_{\wp}^{\inv})) \cong HF^{*+1}(L_{\tilde{\wp}}^{\inv}, (\tilde\beta_- \times \id)(L_{\tilde{\wp}}^{\inv})).
\end{aligned}
\end{equation}
The discrepancy for Markov $II^-$ relative to Markov $II^+$ is compensated for by shifting the grading by $1/2$ each way in \eqref{eq:kh-fixed}. Note that in addition to the various Floer cohomology theories, we have the localization map \eqref{eq:lo}. It is easy to see that this is invariant under Markov $I$ moves. The expected behaviour is that it remains unchanged under Markov $II^+$ moves, and gets multiplied by $q$ under Markov $II^-$. We have not checked the details, since there are no applications at present (however, see Section \ref{subsec:speculate} below for some speculative ideas in that direction).

Let $N_\kappa$ be the double cover of $S^3$ branched along $\kappa$. If we consider the tori $T_{\wp}$, $(\beta \times \id)(T_{\wp})$ as lying inside $\Sym_m(\X^{\inv})$, they are the Heegaard tori for a Morse function on $N_\kappa \# S^1 \times S^2$ with two local minima and maxima, as in \cite{LL}. Moreover, these tori are weakly admissible, compare \cite[Proposition 7.4]{Manolescu}. Hence, the Floer cohomology groups previously denoted by $\overline{HF}(T_{\wp}, (\beta \times \id)(T_{\wp}))$ are just the Heegard Floer groups $\widehat{HF}(N_\kappa \# S^1 \times S^2)$ from \cite{OS-HolDisc}, with coefficients in $\Z_2$. We also have (see for instance \cite[Theorem 2.4]{LL})
\begin{equation}
\widehat{HF}(N_\kappa \# S^1 \times S^2) \iso \widehat{HF}(N_\kappa) \otimes H^*(S^1;\Z_2).
\end{equation}
With this in mind, the Smith inequality \eqref{eq:smith-applied} and the filtration argument \eqref{eq:smith-added} yield the following:

\begin{Corollary}
For any oriented link $\kappa \subset S^3$, we have the following inequality of ranks between cohomology groups with $\Z_2$ coefficients:
\begin{equation} \label{eq:big-inequality}
\dim \Kh_{\mathrm{symp}}(\kappa) \geq 2\dim \widehat{HF}(N_\kappa). \qed
\end{equation}
\end{Corollary}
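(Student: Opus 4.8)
The plan is to derive \eqref{eq:big-inequality} by chaining together the results assembled above, applied to the $\iota$-invariant pair of Lagrangian submanifolds $(L_\wp, (\beta \times \id)(L_\wp))$ in $\Y$ whose associated braid closure is $\kappa$. By Definition \ref{def:kh-symp}, $\Kh_{\mathrm{symp}}(\kappa) = HF(L_\wp, (\beta \times \id)(L_\wp))$ up to the overall grading shift by $m+w$; as the statement concerns only total dimensions, this shift (and the analogous $(m+w)/2$ shift in the fixed-part theory) is irrelevant.

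First I would apply the Floer-theoretic Smith inequality. The pair $(L_\wp, (\beta \times \id)(L_\wp))$ has stably trivial normal structure: Lemma \ref{th:stablynormalexists} provides this for $(L_{\wp_+}, L_{\wp_-})$, and since equivariant parallel transport respects stable normal trivializations (as in the proof of that lemma), the property is inherited by the braid-monodromy image $(\beta \times \id)(L_\wp)$. Hence Theorem \ref{thm:mainsmith} gives $\dim HF(L_\wp, (\beta \times \id)(L_\wp)) \geq \dim HF(L_\wp^{\inv}, (\beta \times \id)(L_\wp^{\inv}))$. Next, pushing the exact Lagrangian isotopies of Lemma \ref{th:interpolate} forward under the equivariant parallel transport defining $\beta \times \id$ --- the braided analogue of Lemma \ref{th:hfhf} --- identifies the right-hand side with $\dim HF(T_\wp, (\beta \times \id)(T_\wp))$. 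Then the filtration and spectral sequence argument behind \eqref{eq:smith-added}, which kills the higher components $d_k$ ($k > 0$) counting Floer trajectories meeting the divisor $D$, yields $\dim HF(T_\wp, (\beta \times \id)(T_\wp)) \geq \dim \overline{HF}(T_\wp, (\beta \times \id)(T_\wp))$.

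It then remains to pass from $\overline{HF}$ to Heegaard Floer homology. Regarding $T_\wp$ and $(\beta \times \id)(T_\wp)$ as tori inside $\Sym_m(\X^{\inv})$ with the exact extension of $\omega_P$, they are precisely the Heegaard tori of the Heegaard diagram for $N_\kappa \# S^1 \times S^2$ associated to a Morse function with two local minima and two local maxima in the sense of \cite{LL}, and they are weakly admissible by \cite[Proposition 7.4]{Manolescu}; hence $\overline{HF}(T_\wp, (\beta \times \id)(T_\wp)) \iso \widehat{HF}(N_\kappa \# S^1 \times S^2; \Z_2)$. Combining this with the connected-sum formula $\widehat{HF}(N_\kappa \# S^1 \times S^2) \iso \widehat{HF}(N_\kappa) \otimes H^*(S^1; \Z_2)$, which doubles the total dimension, produces \eqref{eq:big-inequality}. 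The main obstacle is precisely this last identification: one must reconcile Manolescu's symmetric-product description of $\Y^{\inv}$ and the Perutz-type form $\omega_P$ with the conventions of Heegaard Floer theory, verifying that the $T$-tori genuinely come from the stabilized Heegaard diagram of the double branched cover, that weak admissibility and the $\mathrm{Spin}^c$-decomposition are handled correctly, and that one is consistently working over $\Z_2$. The two preceding inequalities, by contrast, are now formal consequences of the localization machinery developed above.
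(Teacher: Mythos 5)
Your proposal follows the same chain of inequalities and identifications that the paper itself uses: \eqref{eq:smith-applied} from Theorem \ref{thm:mainsmith} via Lemma \ref{th:stablynormalexists}, the identification of Lemma \ref{th:hfhf} (in its braided form), the degeneration estimate \eqref{eq:smith-added}, and finally the match-up of $\overline{HF}$ with $\widehat{HF}(N_\kappa \# S^1\times S^2)$ plus the K\"unneth-type factor $H^*(S^1;\Z_2)$. This is exactly the paper's argument; your only addition is to make explicit the (correct and implicitly used) point that equivariant parallel transport transports stable normal trivializations, so that Lemma \ref{th:stablynormalexists} applies to the pair $(L_\wp, (\beta\times\id)(L_\wp))$ and not just to pairs of crossingless matchings.
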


$\widehat{HF}(N_\kappa)$ breaks up into summands indexed by $Spin^c$ structures. The Euler characteristic of each summand is $\pm 1$ if $H_1(N_\kappa)$ is finite, or zero if it is infinite \cite[Proposition 5.1]{OS-HolDisc2}. Moreover, the $Spin^c$ structures themselves form an affine space over $H_1(N_\kappa)$. This and \eqref{eq:big-inequality} immediately imply Corollary \ref{cor:mainone} from the Introduction.

\subsection{Speculations} \label{subsec:speculate}

It is possible that the Floer cohomology groups inside $\Y^{\inv}$ and $\Sym_m(\X^{\inv})$ always coincide, but we cannot prove that (a claimed proof of this, which was present in an earlier version of this preprint, contained an error concerning the general symplectic geometry properties of embeddings of affine varieties). A particularly interesting aspect of this relationship is the grading. Suppose that $\kappa$ is a knot. Then each summand of $\widehat{HF}(N_\kappa)$ corresponding to a given $\mathit{Spin}^c$ structure carries a relative $\Z$-grading \cite{OS-HolDisc}. In \cite{OS-fourmfld} those gradings were lifted to an absolute $\Q$-grading. On the other hand, our groups $\Kh_{\mathrm{symp,inv}}^*(\kappa)$ come with an absolute $\Z + 1/2$-grading. Both gradings are of Maslov type, meaning that the difference between two generators joined by a Floer-type disc equals the Maslov index of that disc. Hence, if one decomposes $\Kh_{\mathrm{symp,inv}}^*(\kappa)$ into direct summands indexed by connected components of the relevant path space in $\Y^{\inv}$, the difference between the two absolute gradings is constant on each component (we ignore the components which contribute trivially to Floer cohomology). The set of rational numbers encountered as differences could potentially be a knot invariant (and if one restricts to the components corresponding to $\mathit{Spin}$ rather than $\mathit{Spin^c}$ structures, even a knot concordance invariant, following a similar philosophy to that of  \cite{Conc}).

\begin{example}
Let $\kappa$ be the left-handed trefoil.  A variant of the argument at the end of \cite{SS} shows that
\begin{equation}
\Kh_{\mathrm{symp,inv}}^*(\kappa) \cong H^{*-\frac{1}{2}}(S^1) \oplus
H^{*-\frac{1}{2}}(S^1) \oplus H^{*-\frac{1}{2}}(S^1).
\end{equation}
The Ozsv\'ath-Szab\'o $\Q$-degrees of the corresponding generators of $\widehat{HF}^*(N_\kappa)$ are $-\frac{1}{2}$, $\frac{1}{6}$, $\frac{1}{6}$, and the connect sum with $S^1 \times S^2$ produces associated pairs of generators whose degrees are further shifted by $\pm \frac{1}{2}$. Hence, the grading differences are $\frac{3}{2}$, $\frac{5}{6}$, $\frac{5}{6}$. The first number is the one corresponding to the unique $\mathit{Spin}$ structure.
 \end{example}

Another possible object of study would be refined invariants obtained from the localization map \eqref{eq:lo}, in particular the cokernel polynomials \eqref{equation:cokernelpoly}.

\begin{conjecture} \label{conj:jones}
Suppose that $\kappa = \kappa_{\beta}$ is the link closure of a braid $\beta \in Br_m$ with writhe $w$. The polynomial $(-1)^{m+w}t^{(m+w)/2} P_{\cok}(t)$ is an oriented link invariant. For quasi-alternating links, this coincides with the normalised Jones polynomial:
\begin{equation}
V_{\kappa}(s) \ = \ \frac{(-1)^{m+w}t^{(m+w)/2} P_{\cok}(t)}{t^{1/2} + t^{-1/2}}\, \Big|_{s=t^{-1}}
\end{equation}
\end{conjecture}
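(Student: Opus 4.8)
\emph{Strategy.} The conjecture has a formal part — that $\bar P(t):=(-1)^{m+w}t^{(m+w)/2}P_{\cok}(t)$ depends only on the oriented link $\kappa$ — and a computational part, that for quasi-alternating $\kappa$ this Laurent polynomial equals $(t^{1/2}+t^{-1/2})V_\kappa(t^{-1})$. The plan is to establish invariance by Markov-move bookkeeping parallel to \cite{SS}, and then to obtain the Jones-polynomial identification by reducing to the known structure of thin homology theories.

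\emph{Invariance.} By Lemma \ref{th:stablynormalexists} the pair $(L_{\wp_+},(\beta\times\id)(L_{\wp_-}))$ carries a preferred homotopy class of stable normal trivialisation, so Theorem \ref{th:th} gives a well-defined $\Lambda^{(m)}$ depending only on that class, and hence $P_{\cok}(t)$ is an invariant of the Lagrangian isotopy classes. Markov I invariance is immediate, since braid conjugation is realised by an equivariant symplectomorphism carrying all the data along. For Markov II, I would re-run the vanishing-cycle arguments of \cite{SS}, in which the extra strand is modelled by the nondegenerate quadratic singularity $p^2+q^2+r^2$ with vanishing cycle an $S^2$ (a factor of $L_\wp\iso(S^2)^m$), while keeping track of the involution $(p,q,r)\mapsto(q,p,r)$ of Lemma \ref{th:equivariant-vanishing}; on the fixed locus this reduces to a Morse function on $S^1$. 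The stabilisation isomorphisms for $HF(L_\wp^{\inv},-)$ recorded in Section \ref{sec:four} must be promoted to statements about $HF_{\mathrm{borel}}$ and about the chain map $\lambda$ of \eqref{eq:lambda-map}, the expected outcome being that $\Lambda$ is unchanged under $II^+$ and multiplied by $q$ under $II^-$ (the extra normal $S^1$ contributing, respectively, a trivial equivariant tensor factor, or one copy of the tautological torsion class, as in the example at the end of Section \ref{sec:smith-inequality} with $n=1$). Since $P_{\cok}$ is independent of the normalisation index of $\Lambda$, multiplying $\Lambda$ by $q$ does not change it, so the only effect of a Markov move on $P_{\cok}$ comes from the grading shift that already defines $\Kh_{\mathrm{symp,inv}}$ from raw Floer cohomology in \eqref{eq:kh-fixed}; applying that same shift to $P_{\cok}$ — which is exactly what the prefactor $(-1)^{m+w}t^{(m+w)/2}$ does, up to an overall sign fixing orientation conventions — produces an invariant.

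\emph{The quasi-alternating case.} For quasi-alternating $\kappa$ the double branched cover $N_\kappa$ is an $L$-space, so $\widehat{HF}(N_\kappa)=\bigoplus_{\mathfrak s}\Z_2$ with one summand per $\mathrm{Spin}^c$ structure, graded by the $d$-invariants. Via Lemma \ref{th:hfhf}, the identification of $\overline{HF}(T_{\wp_+},T_{\wp_-})$ with $\widehat{HF}(N_\kappa\#S^1\times S^2)\iso\widehat{HF}(N_\kappa)\otimes H^*(S^1)$, and the filtration argument \eqref{eq:smith-added}, one expects — and must prove — that for such links the inequalities \eqref{eq:smith-applied}, \eqref{eq:smith-added}, \eqref{eq:big-inequality} are all equalities; equivalently, the spectral sequence \eqref{eq:borel-ss} degenerates and $HF_{\mathrm{borel}}(L_{\wp_+},L_{\wp_-})$ is torsion-free over $\Z_2[[q]]$. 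Granting this, $\cok(\Lambda^{(m)})$ is controlled purely by the $\mathrm{Spin}^c$-graded dimensions of $HF(L_{\wp_+}^{\inv},L_{\wp_-}^{\inv})\iso\widehat{HF}(N_\kappa)\otimes H^*(S^1)$ together with the normal-index data (which varies over components of the path space just as the $d$-invariants do), so $P_{\cok}(t)$ becomes an explicit generating function over $\mathrm{Spin}^c$ structures — structurally the $n=1$ example summed over components. One then invokes the fact that the $\mathrm{Spin}^c$-graded Euler characteristics of $\widehat{HF}(N_\kappa)$ are the Turaev torsion of $N_\kappa$ \cite[Proposition 5.1]{OS-HolDisc2}, which for a double branched cover repackages the Jones polynomial of $\kappa$; the $H^*(S^1)$ tensor factor, living in two adjacent degrees, produces the denominator $t^{1/2}+t^{-1/2}$; and the substitution $s=t^{-1}$ together with the shift $t^{(m+w)/2}$ reconciles the symplectic grading with the Jones variable, the signature-type correction being absorbed into $m+w$.

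\emph{Main obstacle.} The crux is the degeneration statement underlying the quasi-alternating case — torsion-freeness of $HF_{\mathrm{borel}}$, equivalently sharpness of \eqref{eq:big-inequality}. No purely symplectic proof is known; at present this can only be deduced from the (still unpublished) spectral sequence from combinatorial to symplectic Khovanov cohomology with $\Z_2$ coefficients, combined with thinness of combinatorial Khovanov homology and of $\widehat{HF}$ of double branched covers of quasi-alternating links, and even then only at the level of ranks — whereas the Jones-polynomial identification also requires the gradings to match, hence (at least the graded-Euler-characteristic shadow of) the conjectural bigraded isomorphism $\Kh_{\mathrm{symp}}\iso Kh$. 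Two further gaps flagged in the text would also have to be closed: the precise $q$-multiplicativity of the localization map under Markov $II^-$, and the potential discrepancy between Floer cohomology computed in $\Y^{\inv}$ and in $\Sym_m(\X^{\inv})$.
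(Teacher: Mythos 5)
This statement is explicitly labelled a \emph{Conjecture} in the paper (Conjecture~\ref{conj:jones}), and the authors offer no proof of it: the text surrounding it openly lists the missing ingredients (the unchecked behaviour of the localization map under Markov~$II^-$; the unproved agreement of Floer cohomology in $\Y^{\inv}$ versus $\Sym_m(\X^{\inv})$; the reliance on an unpublished spectral sequence from combinatorial to symplectic Khovanov cohomology; and the conjectural bigraded isomorphism with combinatorial Khovanov homology). Your ``proof'' is therefore being compared against nothing: there is no argument in the paper to match it to.

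Read on its own terms, what you have written is not a proof but a programme, and to your credit you say so. The structure you lay out --- Markov bookkeeping for invariance, $q$-multiplicativity of $\Lambda$ under $II^{\pm}$, degeneration of the spectral sequence/torsion-freeness of $HF_{\mathrm{borel}}$ in the quasi-alternating case, and then matching $\mathrm{Spin}^c$-graded data against the Jones polynomial --- is the natural route, and your ``Main obstacle'' paragraph correctly identifies exactly the places where no argument currently exists: (i) sharpness of \eqref{eq:big-inequality}, equivalently torsion-freeness of $HF_{\mathrm{borel}}$, which is known only via the unpublished combinatorial-to-symplectic spectral sequence and only at the level of total rank; (ii) the grading comparison, which needs more than rank equality (here you gloss too quickly: the $\mathrm{Spin}^c$-graded Euler characteristics of $\widehat{HF}(N_\kappa)$ for a rational homology sphere are just $\pm 1$, not the Turaev torsion --- the Jones polynomial is supposed to enter through the absolute $\Q$-gradings/$d$-invariants, so the precise bookkeeping in your ``computational part'' is not yet a derivation); and (iii) the unchecked assertion that the localization map transforms by a power of $q$ under $II^-$ and is unchanged under $II^+$, on which even the \emph{well-definedness} of $\bar P(t)$ depends. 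Because none of these steps is proved, the proposal does not establish the statement; it repackages the conjecture and its known prerequisites, which is consistent with the paper's own framing of this as a speculation rather than a theorem.
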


Recall that $P_{\cok}(t)$ measures the complexity of the torsion $\Z_2[[q]]$-module which is the cokernel of the renormalized localization map. The additional expression involving $m+w$ compensates for the change in this map under Markov $II^-$. As mentioned in the Introduction, with $\Z_2$ coefficients  there is a (currently unpublished, but related to work of Rezazadegan \cite{Reza}) spectral sequence going from combinatorial Khovanov cohomology to its symplectic counterpart. On the other hand, the total rank of combinatorial Khovanov cohomology agrees with that of $\widehat{HF}(N_\kappa \# S^1 \times S^2)$ in the quasi-alternating case. In view of our results, this implies that, over $\Z_2$, $\Kh_{\mathrm{symp}}(\kappa)$ and $\Kh_{\mathrm{symp,inv}}(\kappa)$ both have rank which equals that of combinatorial Khovanov cohomology. In fact, both theories are then supposed to be versions of combinatorial Khovanov cohomology, but with the bigrading collapsed in different ways, analogously to \cite[Corollary 1.5]{khovanov-seidel}. Conjecture \ref{conj:jones} is an attempt to recover aspects of that bigrading by a comparison argument. Unfortunately, it is very unlikely that such an attempt would work for general knots and links.

\section{Symmetric links}
Take $\Sym_{2m}(\C)$ and equip it with the involution which takes $t = (t_1,\dots,t_{2m})$ to $-t = (-t_1,\dots,-t_{2m})$. The fixed point set is the subset $\Sym_{2m}(\C)^{\inv}$ of those $t$ which can be written in such a way that $t_{2i-1} = -t_{2i}$. There is a natural isomorphism
\begin{equation} \label{eq:sym-quotient}
\Sym_{2m}(\C)^{\inv} \iso \Sym_m(\C)
\end{equation}
which takes a point $t$ in the form written above to $\bar{t} = (t_2^2, t_4^2,\dots, t_{2m}^2)$. Similarly, one defines the subspace $\Conf_{2m}(\C)^{\inv}$ of symmetric configurations, which is identified with $\Conf_m(\C^*)$ by restricting \eqref{eq:sym-quotient}.

Without changing notation, we will consider the slice $\Slice$ consisting of matrices of the form \eqref{eq:y-matrix} but inside $\mathfrak{gl}_{2m}(\C)$, which means that $\mathrm{trace}(A_1)$ no longer needs to be zero. So $\Slice \iso \C^{4m}$, and the adjoint quotient map $\chi$ goes to $\Sym_{2m}(\C) \iso \C^{2m}$. Otherwise, its properties are essentially unchanged. In particular, if one takes $t \in \Conf_{2m}^0(\C)$, then the fibre $\Y = \chi^{-1}(t)$ is the same as before, and the other smooth fibres are diffeomorphic to it through parallel transport. Assume that $m$ is even, and consider the following involution on $\Slice$:
\begin{equation} \label{eq:minus-one}
\iota(A_1,\dots,A_m) = (-A_1, A_2, -A_3, A_4, \dots, -A_{m-1},A_m).
\end{equation}
This covers the involution $t \mapsto -t$ of $\Sym_{2m}(\C)$. Moreover, if $t \in \Conf_{2m}(\C)^{\inv}$, then the fixed part $\Y^{\inv}$ of the fibre $\Y = \chi^{-1}(t)$ can be identified with $\bar{\Y} = \bar\chi^{-1}(\bar{t})$, where $\bar{t}$ corresponds to $t$ under \eqref{eq:sym-quotient}, and $\bar\chi: \bar\Slice \rightarrow \Sym_m(\C)$ is the slice and adjoint quotient map for $\mathfrak{gl}_m(\C)$. We write this fact concisely as
\begin{equation} \label{eq:y-inv}
\Y^{\inv} = \bar{\Y}.
\end{equation}

One can arrange for the symplectic form $\Omega$ on $\Slice$ to be invariant under \eqref{eq:minus-one}. Its restriction to the fixed point set yields a form $\bar{\Omega}$ on $\bar\Slice$ in the same general class. Hence, the rescaled parallel transport map associated to paths in $\Conf_{2m}(\C)^{\inv}$ will be equivariant, and their restrictions to fixed point sets reduce to the corresponding maps for the associated paths in $\Conf_m(\C^*) \subset \Conf_m(\C)$. Next, fix some $t \in \Conf_{2m}(\C)^{\inv}$, and a crossingless matching $\wp$ which is symmetric in the same sense. Its image under $x \mapsto x^2$ is a crossingless matching $\bar{\wp}$ with endpoints in $\bar{t}$, which additionally avoids the origin. The associated Lagrangian submanifolds are related as follows:

\begin{lemma}
Under the identification \eqref{eq:y-inv}, the fixed part $L_{\wp}^{\inv}$ is Lagrangian isotopic to $L_{\bar{\wp}}$.
\end{lemma}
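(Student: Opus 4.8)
The plan is to imitate, for the involution \eqref{eq:minus-one}, the inductive relative vanishing cycle construction used in Lemma \ref{th:equivariant-vanishing} and in \cite{SS}, carrying the fixed loci along at every step, after a preliminary reduction by equivariant parallel transport. For the reduction, fix a standard symmetric crossingless matching $\wp_{\mathrm{std}}$ at a standard symmetric configuration. Under the identification $\Conf_{2m}(\C)^{\inv}\iso\Conf_m(\C^*)$ (squaring), symmetric crossingless matchings of $2m$ points in $\C$ correspond bijectively to crossingless matchings of $m$ points in $\C^*$: each arc $\bar\gamma\subset\C^*$, being contractible, lifts uniquely to a disjoint pair $\{\gamma,-\gamma\}$, and conversely. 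This correspondence intertwines the action of symmetric braids with that of the braid group $\pi_1(\Conf_m(\C^*))$ of the punctured plane; since the latter acts transitively on crossingless matchings of $m$ points in $\C^*$ (the annular analogue of the standard fact that crossingless matchings in a disc form a single braid orbit), symmetric braids act transitively on symmetric crossingless matchings. Thus $\wp=\beta\cdot\wp_{\mathrm{std}}$ for some symmetric braid $\beta$. As noted earlier in this section, the rescaled parallel transport along $\beta$ is $\iota$-equivariant and restricts on $\Y^{\inv}=\bar\Y$ to the rescaled parallel transport along the corresponding path $\bar\beta$ in $\Conf_m(\C^*)$; the former carries $L_{\wp_{\mathrm{std}}}$ to $L_\wp$ and the latter carries $L_{\bar\wp_{\mathrm{std}}}$ to $L_{\bar\wp}$, both up to Lagrangian isotopy. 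Hence it suffices to treat $\wp=\wp_{\mathrm{std}}$.

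Now induct on $m$ (even), the cases $m=0,2$ being immediate. Choose $\wp_{\mathrm{std}}$ so that its outermost symmetric pair of arcs $\{\gamma,-\gamma\}$ can be collapsed: there is a bidisc family $D^2_\epsilon\subset\Sym_{2m}(\C)$, with parameters $(z_1,z_2)$ recording the collisions of the $\gamma$- and $(-\gamma)$-pairs, over which $\chi$ has a Morse--Bott critical locus canonically identified with the $(2m-4)$-strand fiber $\Y''$, a neighborhood of the critical locus being identified with a neighborhood of $\Y''\times\{0\}$ in $\Y''\times\C^6$ on which $\chi=(p_1^2+q_1^2+r_1^2,\,p_2^2+q_2^2+r_2^2)$. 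The involution $t\mapsto -t$ exchanges the two collisions, hence acts on $D^2_\epsilon$ by $(z_1,z_2)\mapsto(z_2,z_1)$, with fixed locus the diagonal disc $\{z_1=z_2\}$; and, inspecting the normal-form construction exactly as in Lemma \ref{th:equivariant-vanishing}, the identifications above can be taken $\iota$-equivariant, with $\iota$ acting on $\Y''\times\C^6=\Y''\times\C^3\times\C^3$ by $(A'',v_1,v_2)\mapsto(\iota''(A''),v_2,v_1)$. Its fixed locus is the diagonal $\{(A'',v,v):A''\in(\Y'')^{\inv}\}\iso(\Y'')^{\inv}\times\C^3$; by the inductive hypothesis $(\Y'')^{\inv}=\overline{\Y''}$, and the resulting model $\overline{\Y''}\times\C^3$ with $\bar\chi=\bar p^2+\bar q^2+\bar r^2$ over $\{z_1=z_2\}$ is precisely the local Morse--Bott model for collapsing the single arc $\bar\gamma$ from $\bar\wp_{\mathrm{std}}$ in $\bar\Slice$.

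To conclude, recall that $L_{\wp_{\mathrm{std}}}$ is obtained by intersecting a suitable nearby fiber of $\chi$ with the stable set $W$ of $L_{\wp''}$ under the negative gradient flow of $\re(\chi)$ for an $\iota$-invariant metric, $\wp''$ being $\wp_{\mathrm{std}}$ with the outermost pair deleted. Since that flow is $\iota$-equivariant, $W^{\inv}$ is the stable set of $(L_{\wp''})^{\inv}=L_{\bar\wp''}$ (inductive hypothesis) within the fixed locus --- equivalently, the stable set for the negative gradient flow of $\re(\bar\chi)$ --- hence exactly the set used to build $L_{\bar\wp_{\mathrm{std}}}$; intersecting with the fiber, which restricts on the fixed locus to the corresponding fiber of $\bar\chi$, gives $(L_{\wp_{\mathrm{std}}})^{\inv}=L_{\bar\wp_{\mathrm{std}}}$ up to Lagrangian isotopy, closing the induction. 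The step I expect to be the main obstacle is the one in the preceding paragraph: checking that the local normal form of the degeneration can be chosen equivariantly for \eqref{eq:minus-one} with the two $\C^3$-factors genuinely interchanged, and the accompanying bookkeeping --- in particular that the two pair-collisions upstairs map under squaring to the single pair-collision downstairs --- so that passing to the fixed locus really sends the $\Y''\times\C^6$ model to the $\overline{\Y''}\times\C^3$ model along the diagonal, with no spurious factors. A lesser point requiring care is the transitivity statement invoked in the reduction step.
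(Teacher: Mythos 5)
The paper's own proof is a one-liner and goes a different route: it invokes the direct vanishing cycle construction of \cite[Lemma 30]{SS} (i.e., the $m$-parameter polydisc model as in Lemma \ref{th:framed-vanishing}, collapsing all arcs simultaneously) and explicitly states that ``the inductive construction from Lemma \ref{th:equivariant-vanishing} would not work here because of the symmetry constraints: if we move the points of $\bar t$ so that two of them come together, four of the points of the corresponding $t$ will come together in pairs, and the points where they come together cannot lie at the origin.'' Your proposal is precisely the inductive adaptation the paper warns against. The structural observations are correct --- the bidisc base, the involution $(z_1,z_2)\mapsto(z_2,z_1)$, the diagonal as fixed locus matching a single degeneration in $\bar\Slice$ --- but the load-bearing step, namely that the Morse--Bott critical locus over $(z_1,z_2)=(0,0)$ is ``canonically identified with the $(2m-4)$-strand fiber $\Y''$,'' is asserted rather than established, and is exactly where the approach runs into trouble. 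In \cite{SS} and in the proof of Lemma \ref{th:equivariant-vanishing}, the identification of the critical submanifold with a lower slice fiber $\Y'$ is set up specifically for a pair of eigenvalues colliding at the origin, where the structure of the nilpotent slice \eqref{eq:y-matrix} privileges eigenvalue $0$ (the slice is not invariant under $A\mapsto A-\lambda I$). Under the involution \eqref{eq:minus-one} the collision points are forced to be $\pm\tau$ with $\tau\neq 0$, and no lower-slice identification is provided in that regime; this is what the parenthetical remark in the paper is pointing at. You correctly flag this as ``the main obstacle,'' but the induction presumes it solved rather than solving it, so the step where you apply the inductive hypothesis to $(\Y'')^{\inv}=\overline{\Y''}$ is not justified.

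The direct construction sidesteps this entirely: in Lemma \ref{th:framed-vanishing} the fully degenerate fiber over the origin of the $m$-polydisc has an isolated special point with local Morse coordinates $(u_i,v_i,w_i)$, and one needs no identification of any critical locus with a smaller slice. That construction is manifestly equivariant for \eqref{eq:minus-one}: the involution permutes the polydisc coordinates by the fixed-point-free involution pairing $\tau_i\leftrightarrow -\tau_i$ and correspondingly permutes the blocks $(u_i,v_i,w_i)$, the real locus $\Delta$ is preserved, and its fixed part is the real locus of the half-dimensional diagonal polydisc, which is the local model used to build $L_{\bar\wp}$ in $\bar\Slice$. (The preliminary parallel-transport reduction to a standard $\wp$, and hence the transitivity of symmetric braids on symmetric crossingless matchings, is implicit in both routes, so that part of your argument is not where the difficulty lies --- though it is indeed worth checking that $\pi_1(\Conf_m(\C^*))$ acts transitively, e.g.\ by noting that dragging a single endpoint around $0$ changes the winding of the arc ending there.) To repair the induction you would either have to prove the critical-locus identification for collisions at generic $\tau\neq 0$, or reorganize so as to rely only on local normal forms as in the direct approach.
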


This follows from the construction as vanishing cycles, approached directly as in \cite[Lemma 30]{SS} (the inductive construction from Lemma \ref{th:equivariant-vanishing} would not work here because of the symmetry constraints: if we move the points of $\bar{t}$ so that two of them come together, four of the points of the corresponding $t$ will come together in pairs, and the points where they come together cannot lie at the origin).

\begin{lemma}
For any two symmetric crossingless matchings $\wp_\pm$, the pair $(L_{\wp_+},L_{\wp_-})$ in $\Y$ has stably trivial normal structure.
\end{lemma}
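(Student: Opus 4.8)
The plan is to follow the proof of Lemma~\ref{th:stablynormalexists} essentially verbatim, replacing the transposition involution by \eqref{eq:minus-one} and checking that the few geometric inputs have equivariant analogues in the present setting. First, since $\Slice \iso \C^{4m}$ and $\chi: \Slice \to \Sym_{2m}(\C) \iso \C^{2m}$ is a submersion at $t$, the short exact sequence $0 \to T\Y \to T\Slice|_\Y \to \chi^{*}T(\Sym_{2m}(\C))|_\Y \to 0$ of complex vector bundles is split, uniquely up to homotopy, so $T\Y$ acquires a canonical stable trivialization. Because $\chi$ intertwines \eqref{eq:minus-one} with $t \mapsto -t$ and we have chosen $t \in \Conf_{2m}(\C)^{\inv}$, the involution $\iota$ preserves $\Y$ and acts on the above sequence; the space of equivariant splittings is non-empty (by averaging) and contractible (being affine), so the stable trivialization can be taken $\iota$-equivariant, and, exactly as in Lemma~\ref{th:stablynormalexists}, compatible with rescaled parallel transport up to canonical equivariant homotopy.

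Next I would produce the equivariant stable trivialization of $TL_\wp$. As the text preceding the lemma already notes, the symmetry constraints rule out the inductive argument of Lemma~\ref{th:equivariant-vanishing}, so one works directly with the model of \cite[Lemma 30]{SS}: for a symmetric crossingless matching $\wp$ one has local coordinates near a distinguished point of $\Y$ in which $\chi$ becomes $(u_1^2+v_1^2+w_1^2,\dots,u_m^2+v_m^2+w_m^2)$ and in which \eqref{eq:minus-one} acts by a standard linear involution; in particular $L_\wp$ is $\iota$-invariant. In this model one constructs a contractible, $\iota$-invariant, totally real half-dimensional submanifold $\Delta$ whose intersection with the relevant fibre is $L_\wp$ --- the equivariant analogue of Lemma~\ref{th:framed-vanishing}. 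Contractibility of $\Delta$ trivializes $T\Delta$, hence, by the equivariant version of the commutative diagram \eqref{eq:triv-triv} over $L_\wp$, it gives a stable trivialization of $TL_\wp$ whose complexification is canonically and $\iota$-equivariantly homotopic to the trivialization of $T\Y|_{L_\wp}$ built above. Rescaled parallel transport in $\Conf_{2m}(\C)^{\inv}$, which respects all of this data up to equivariant homotopy, then extends the conclusion to an arbitrary symmetric $\wp$, and in particular to both $\wp_\pm$.

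Finally I would restrict to fixed loci. Under $\Y^{\inv} = \bar{\Y}$ the anti-invariant part of $T\Y|_{\Y^{\inv}}$ is $TM^{\anti}$, and the anti-invariant part of the equivariant stable trivialization of $T\Y$ supplies the map $\phi$ of Definition~\ref{definition:stablenormal}; similarly $TL_{\wp_k}^{\anti}$ is the anti-invariant part of $TL_{\wp_k}|_{L_{\wp_k}^{\inv}}$, and restricting the equivariant homotopy to anti-invariant directions over $[0,1] \times L_{\wp_k}$ yields the Lagrangian subbundles $\Lambda_0,\Lambda_1$ with the prescribed boundary values. This produces the required stable normal trivialization, indeed a preferred homotopy class of them.

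The main obstacle is the equivariant framed vanishing cycle statement in the second paragraph: one must check that, for the involution \eqref{eq:minus-one} and a symmetric crossingless matching, the totally real submanifold of \cite[Lemma 30]{SS} (and of Lemma~\ref{th:framed-vanishing}) can be chosen $\iota$-invariant and that $\iota$ acts by a linear involution in the relevant local coordinates. Once that is in place, the rest is transport of structure and diagram-chasing identical to Lemma~\ref{th:stablynormalexists}.
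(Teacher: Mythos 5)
Your proposal is correct and is essentially the argument the paper intends: the paper's own proof of this lemma is the single sentence ``The proof is by the same argument as in Lemma \ref{th:stablynormalexists}.'' You have spelled out precisely how that argument transfers — replacing the transposition involution by \eqref{eq:minus-one}, noting that the inductive construction of Lemma \ref{th:equivariant-vanishing} is unavailable here (a point the paper itself makes just before this lemma) and using instead the direct model of \cite[Lemma 30]{SS} / Lemma \ref{th:framed-vanishing}, extending by parallel transport within $\Conf_{2m}(\C)^{\inv}$, and restricting the equivariant trivializations to anti-invariant directions to obtain the data of Definition \ref{definition:stablenormal}. You also correctly isolate the one thing that genuinely needs checking (that the totally real submanifold $\Delta$ can be taken $\iota$-invariant and that $\iota$ acts linearly in the Morse--Bott model coordinates), which is exactly what the paper's laconic ``same argument'' is quietly assuming.
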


The proof is by the same argument as in Lemma \ref{th:stablynormalexists}. As a consequence, we now have an inequality
\begin{equation} \label{eq:wp-bar}
\dim\, HF(L_{\wp_+},L_{\wp_-}) \geq \dim\, HF(L_{\bar{\wp}_+}, L_{\bar{\wp}_-}).
\end{equation}

To apply this to knot theory, we set $t$ to be the collection of $2m$-th roots of unity, and $\wp$ the crossingless matching comprising the arcs of the unit circle $\{\exp(i\pi [k/m, (k+1)/m]) \ | \ k=0,2,4,\ldots, 2m-2\}$. For any braid $\beta \in \pi_1(\Conf_{2m}(\C))$ with base point $t$, we again have two Lagrangian submanifolds $L_{\wp}$ and $\beta(L_{\wp})$.

\begin{lemma} \label{th:wal}
As an ungraded group, $HF(L_{\wp},\beta(L_{\wp}))$ is the symplectic Khovanov cohomology of the link formed by taking the circular plat closure of $\beta$. \qed
\end{lemma}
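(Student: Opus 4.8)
The plan is to identify $HF(L_\wp,\beta(L_\wp))$ with the link invariant $\Kh_{\mathrm{symp}}$ of \cite{SS}, using only the functoriality and Markov-invariance properties established there together with the rescaled parallel transport formalism recalled above. First I would record that $L_\wp\subset\Y$ is a relative vanishing cycle canonically attached to the crossingless matching $\wp$, that a loop $\beta\in\pi_1(\Conf_{2m}(\C))$ based at $t$ acts on $\Y$ by a symplectomorphism well defined up to Hamiltonian isotopy via rescaled parallel transport, and hence that $HF(L_\wp,\beta(L_\wp))$ is a well-defined ungraded $\Z_2$-vector space. Changing $t$ or the chosen path, or replacing $(\wp,\beta)$ by an ambient isotopic pair, leaves this group unchanged, the required continuation isomorphisms being supplied by parallel transport exactly as in \cite{SS}. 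The pair $(\wp,\beta)$ manifestly determines an oriented link: capping the top and the bottom of the braid $\beta$ with a copy of $\wp$ produces the circular plat closure, which I will write as $\widehat\beta$.

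The substance of the lemma is that $HF(L_\wp,\beta(L_\wp))$ depends on $(\wp,\beta)$ only through $\widehat\beta$. Viewing $L_\wp$ as the ``cup'' object attached to $\wp$ and, through the duality implicit in Floer cohomology, the second copy of $L_\wp$ as the corresponding ``cap'', this group is the evaluation of the symplectic cup/cap/braid formalism of \cite{SS} on the tangle composite that closes $\beta$ into $\widehat\beta$. Any two plat presentations of the same oriented link, and more generally a plat presentation and a braid-closure presentation, are related by a finite sequence of elementary moves: ambient isotopy of the matching (already handled); conjugation $\beta\mapsto\gamma\beta\gamma^{-1}$ and absorption of a half-twist across a single arc of $\wp$ into the matching, which alter neither $\widehat\beta$ nor, since the resulting matching is isotopic to $\wp$ and we ignore gradings, the Floer group; and plat stabilization, i.e.\ passing to a $(2m+2)$-strand pair with one extra arc and a new trivial or half-twisted strand. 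Invariance under stabilization is precisely the Markov $II$ argument of \cite[Lemma 57]{SS} and its surroundings, and the entire proof of Markov invariance there is assembled from moves of exactly these types, so it transplants with only notational changes to the circular plat setting. Writing $\widehat\beta=\kappa_{\beta_0}$ as the braid closure of some braid $\beta_0$, that presentation is itself a plat presentation, with nested-horseshoe matching $\wp_0$ and braid $\beta_0\times\id$, and the invariance just established yields
\[
HF(L_\wp,\beta(L_\wp))\;\cong\;HF\bigl(L_{\wp_0},(\beta_0\times\id)(L_{\wp_0})\bigr)\;=\;\Kh_{\mathrm{symp}}(\kappa_{\beta_0})\;=\;\Kh_{\mathrm{symp}}(\widehat\beta)
\]
as ungraded groups (the middle two Floer cohomologies living a priori in the $\Y$ attached to $2m_0$ points, the isomorphism being the statement that the ungraded group depends only on the link), which is the assertion.

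The main obstacle is the bookkeeping in the invariance step: one must check that the listed moves really connect any two such presentations of a given oriented link, and that each induces an isomorphism of the ungraded Floer groups under the present hypotheses, with exactness and the rescaled parallel transport of \cite{SS} doing all the analytic work (the involution $\iota$ is irrelevant here). Since this is a direct imitation of the arguments of \cite{SS}, the lemma is stated without a detailed proof; the only genuinely new feature is the circular rather than linear arrangement of the marked points, and that is absorbed into the isotopy-of-matching move.
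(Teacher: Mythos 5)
Your proposal is correct in spirit but takes a genuinely different route from the paper. The paper's entire ``proof'' is a citation: immediately after the lemma it is explained that this is a special case of a more general result of Waldron \cite[Section 4.2]{waldron}, which computes $\Kh_{\mathrm{symp}}$ of any link in admissible position in $\R^3$ (height function Morse, all local maxima above $\{z=0\}$, all local minima below) by a cup/cap Floer group, and circular plat closures are admissible. You instead re-derive plat invariance from scratch by imitating the Markov-invariance argument of \cite{SS}, landing on the standard nested-horseshoe braid-closure presentation that defines $\Kh_{\mathrm{symp}}$. Conceptually that is essentially the content of Waldron's theorem, so the two routes differ mainly in whether one imports the result or reconstructs it: yours buys self-containment, the paper's buys economy and shifts the bookkeeping to a reference.

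The one substantive imprecision in your sketch concerns the move set. As written, ``conjugation $\beta\mapsto\gamma\beta\gamma^{-1}$'' for general $\gamma$ does change the plat closure. The correct statement (Birman's plat equivalence theorem) is that two plat presentations of the same link on a fixed number of strands differ by \emph{two-sided} multiplication $\beta\mapsto\gamma_1\beta\gamma_2$ with $\gamma_1,\gamma_2$ in the Hilden subgroup stabilizing $\wp$, together with stabilization; and for the circular matching the Hilden subgroup has an extra cyclic generator. Each such $\gamma_i$ preserves $\wp$ up to isotopy, so $HF(L_\wp,\gamma_1\beta\gamma_2(L_\wp))\cong HF(L_{\gamma_1^{-1}\wp},\beta(L_{\gamma_2\wp}))\cong HF(L_\wp,\beta(L_\wp))$ by the Hamiltonian-isotopy invariance and parallel-transport functoriality of \cite{SS} --- that is the argument you want, not naked conjugation. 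You flag exactly this gap (``one must check that the listed moves really connect any two such presentations''), but leave it open; it is precisely the bookkeeping that Waldron's paper supplies, which is why the paper delegates the proof to \cite{waldron} rather than repeating it.
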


Here, the circular plat closure is the link obtained from the graph of $\beta$ in $\R^2 \times [0,1]$ by attaching a copy of $\wp$ to the top and bottom. Lemma \ref{th:wal} is a special case of a more general result of Waldron \cite[Section 4.2]{waldron}, who gives a general way of computing symplectic Khovanov cohomology for links in admissible position in $\R^3$ (ones for which the height function given by the $z$ coordinate is a Morse function with all local maxima in the half-space $\{z>0\}$ and all local minima in the half-space $\{z<0\}$).

From now on, suppose that $\beta$ is symmetric, which means that is is given by a path in $\Conf_{2m}(\C)^\inv$. Then, the circular plat closure is a link $\kappa \subset \R^3$ disjoint from the $z$-axis. Taking the quotient under $(x,y,z) \mapsto (-x,-y,z)$ then yields another link $\bar\kappa$, which is the circular plat closure of the $m$-stranded braid $\bar\beta$ corresponding to $\beta$. Lemma \ref{th:wal} also applies to $\bar\beta$, and implies that $HF(L_{\bar{\wp}},\bar\beta(L_{\bar{\wp}})) \iso \Kh_{\mathrm{symp}}(\bar\kappa)$. The inequality \eqref{eq:wp-bar} therefore yields Corollary \ref{cor:maintwo}.

\begin{example}
Take a knot whose prime decomposition is of the form $\kappa = \#_{i=1}^s m_i \kappa_i$, where each multiplicity $m_i$ is even. One can then arrange that $\kappa$ is symmetric in the sense considered above (and the quotient is $\bar\kappa = \#_{i=1}^s (m_i/2) \kappa_i$). More interestingly, there are prime knots preserved by (many inequivalent) involutions, see for instance \cite{sakuma}.
\end{example}


\begin{thebibliography}{10}

%\bibitem{austin-braam}
%D.~Austin and P.~Braam.
%\newblock Morse-{B}ott theory and equivariant cohomology.
%\newblock In {\em The {F}loer memorial volume}, volume 133 of {\em Progr.
%  Math.}, pages 123--183. Birkh\"auser, 1995.
%
%\bibitem{atiyah-singer}
%M.~Atiyah and I.~Singer.
%\newblock Index theory for skew-adjoint {F}redholm operators.
%\newblock {\em Publ. Math. IHES}, 37:305--326, 1969.

\bibitem{ACGH}
E.~Arbarello, M.~Cornalba, P.~Griffith, J.~Harris.
\newblock {\em Geometry of algebraic curves,} volume I.
\newblock Springer, 1985.

\bibitem{Bredon}
G.~Bredon.
\newblock {\em Introduction to compact transformation groups.}
\newblock Academic Press, 1972.

\bibitem{CJS}
R.~Cohen, J.~Jones and G.~Segal.
\newblock Floer's infinite-dimensional Morse theory and homotopy theory.
\newblock In {\em The Floer memorial volume}, volume 133 of {\em Progr. Math.}, pages 297-325.
Birkh\"auser, 1995.

\bibitem{Donaldson-Floerbook}
S.~Donaldson.
\newblock {\em Floer homology groups in {Y}ang-{M}ills theory}, volume 147 of
  {\em Cambridge Tracts in Mathematics}.
\newblock Cambridge University Press, 2002.

%\bibitem{floer-morse}
%A.~Floer.
%\newblock Morse theory for {L}agrangian intersections.
%\newblock {\em J. Differential Geom.}, 28:513--547, 1988.

\bibitem{FO3}
K.~Fukaya, Y.-G.~Oh, H.~Ohta, and K.~Ono.
\newblock Lagrangian intersection {F}loer theory - anomaly and obstruction.
\newblock American Math. Soc., 2009.

\bibitem{hutchings}
M.~Hutchings.
\newblock Floer homology of families. I.
\newblock {\em Algebr. Geom. Topol.}, 8:435--492, 2008.

\bibitem{jones}
J.~Jones.
\newblock Cyclic homology and equivariant homology.
\newblock {\em Invent. Math.}, 87:403--423, 1987.

%\bibitem{kallel-salvatore}
%S.~Kallel and P.~Salvatore.
%\newblock Symmetric products of two dimensional complexes.
%\newblock In {\em Recent developments in algebraic topology}, vol. 407 of Contemp. Math.
%\newblock Amer. Math. Soc, 2006.

\bibitem{khovanov-jones}
M.~Khovanov.
\newblock A categorification of the {J}ones polynomial.
\newblock {\em Duke Math. J.}, 101(3):359--426, 2000.

\bibitem{khovanov-seidel}
M.~Khovanov and P.~Seidel.
\newblock Quivers, {F}loer cohomology, and braid group actions.
\newblock {\em J. Amer. Math. Soc.}, 15:203--271, 2002.

%\bibitem{kronheimer-ALE}
%P.~B. Kronheimer.
%\newblock Instantons and the geometry of the nilpotent variety.
%\newblock {\em J. Differential Geom.}, 32:473--490, 1990.

\bibitem{KM}
P.~Kronheimer and T.~Mrowka.
\newblock {\em Monopoles and three-manifolds.}
\newblock Cambridge University Press, 2008.

\bibitem{Lickorish}
W.~Lickorish.
\newblock {\em An introduction to knot theory.}
\newblock Springer-Verlag, 1997.

\bibitem{LL}
D.~Lee and R.~Lipshitz.
\newblock {Covering spaces and $\mathbb{Q}$-gradings on Heegaard Floer homology.}
\newblock {\em J. Symplectic Geom.}, 6:33-59, 2008.

\bibitem{MacD}
I.~Macdonald.
\newblock {Symmetric products of an algebraic curve}.
\newblock {\em Topology}, 1:319--343, 1962.

\bibitem{Manolescu}
C.~Manolescu.
\newblock Nilpotent slices, Hilbert schemes, and the Jones polynomial.
\newblock {\em Duke Math. J.}, 132:311-369, 2006.

\bibitem{Conc}
C.~Manolescu and B.~Owens.
\newblock A concordance invariant from the Floer homology of double branched covers.
\newblock {\em Int. Math. Res. Not.}, Art. ID rnm077, 21 pp., 2007.
%\bibitem{MW}
%J.~Montesinos and W.~Whitten.
%\newblock Constructions of two-fold branched covering spaces.
%\newblock {\em Pacific J. Math.} 125:415--446, 1986.

\bibitem{Mumford}
D.~Mumford.
\newblock {\em Tata lectures on Theta}, volume II.
\newblock Birkh\"auser, 1984.

%\bibitem{Nakayashiki}
%A.~Nakayashiki.
%\newblock On the cohomology of theta divisors of hyperelliptic Jacobians.
%\newblock In {\em Integrable systems, topology and physics}, pages 177-183. Amer. Math. Soc., 2002.
%
%\bibitem{ong}
%B.~Ong.
%\newblock The homotopy type of the symmetric products of bouquets of circles.
%\newblock {\em Internat. J. Math.}, 14:489--497, 2003.

\bibitem{OS-HolDisc2}
P.~Ozsv{\'a}th and Z.~Szab{\'o}.
\newblock Holomorphic disks and three-manifold invariants: properties and
  applications.
\newblock {\em Ann. of Math. (2)}, 159(3):1159--1245, 2004.

\bibitem{OS-HolDisc}
P.~Ozsv{\'a}th and Z.~Szab{\'o}.
\newblock Holomorphic disks and topological invariants for closed
  three-manifolds.
\newblock {\em Ann. of Math. (2)}, 159(3):1027--1158, 2004.

\bibitem{OS-fourmfld}
P.~Ozsv\'ath and Z.~Szab\'o.
\newblock Holomorphic triangles and invariants for smooth four-manifolds.
\newblock {\em Adv. Math.} 202:326-400, 2006.

\bibitem{OS-doublecover}
P.~Ozsv\'ath and Z.~Szab\'o.
\newblock On the {H}eegaard {F}loer homology of branched double-covers.
\newblock {\em Adv. Math.} 194:1--33, 2005.

\bibitem{Perutz}
T.~Perutz.
\newblock Hamiltonian handleslides for Heegaard Floer homology.
\newblock Proc. 14th G\"okova Geometry-Topology Conference (2007), GGT, 2008.

\bibitem{pozniak}
M.~Pozniak.
\newblock Floer homology, Novikov rings and clean intersections.
{\em Northern California Symplectic Geometry Seminar}, pages 119--181.
Amer.\ Math.\ Soc., 1999.

\bibitem{PS}
A.~Pressley and G.~B. Segal.
\newblock {\em Loop groups.}
\newblock Oxford University Press, 1986.

\bibitem{Reza}
R.~Rezazadegan.
\newblock {\em Pseudo\-holo\-morphic quilts and Khovanov homology}.
\newblock Preprint arXiv:0912.0669, 2009.

\bibitem{sakuma}
M.~Sakuma.
\newblock Uniqueness of symmetries of knots.
\newblock {\em Math. Zeit.} 192:225-242, 1986.

%\bibitem{seidel-graded}
%P.~Seidel.
%\newblock Graded {L}agrangian submanifolds.
%\newblock {\em Bull. Soc. Math. France}, 128:103--146, 2000.

\bibitem{FCPLT}
P.~Seidel.
\newblock {F}ukaya categories and {P}icard-{L}efschetz theory.
\newblock European Math. Soc. Publishing House, 2008.

\bibitem{SS}
P.~Seidel and I.~Smith.
\newblock A link invariant from the symplectic geometry of nilpotent slices.
\newblock {\em Duke Math. J.} 134:453-514, 2006.

%\bibitem{SS:Rama}
%P.~Seidel and I.~Smith.
%\newblock The symplectic topology of Ramanujam's surface.
%\newblock {\em Comment. Math. Helv.} 80:859-881, 2005.

\bibitem{Smith:MSRI}
P.~Seidel and I.~Smith.
\newblock Symplectic geometry of the adjoint quotient, I-II.
\newblock Lectures at MSRI, April 2004.

\bibitem{Smith-inequality}
P.~Smith.
\newblock Transformations of finite period, I-III.
\newblock {\em Ann. of Math.}, (I) 39:127-164, 1938; (II) 40:690-711, 1940; (III) 42:446-458, 1941.

\bibitem{V1}
J.~Varouchas.
\newblock Stabilit\'e de la classe des vari\'et\'es K\"ahl\'eriennes par certains morphismes propres.
\newblock {\em Invent. Math.}, 77:117--127, 1984.

\bibitem{V2}
J.~Varouchas.
\newblock K{\"a}hler spaces and proper open morphisms.
\newblock {\em Math. Ann.}, 283:13--52, 1989.

\bibitem{viterbo-functorsI}
C.~Viterbo.
\newblock Functors and computations in {F}loer homology with applications,
  {P}art {I}.
\newblock {\em Geom. Funct. Anal.}, 9:985--1033, 1999.

\bibitem{waldron}
J.~Waldron.
\newblock An invariant of link cobordisms from symplectic Khovanov homology.
\newblock Preprint, arXiv:0912.5067.

%\bibitem{WW}
%K.~Wehrheim and C.~Woodward.
%\newblock Functoriality for Floer theory in Lagrangian correspondences.
%\newblock Preprint math.0708:2851.
\end{thebibliography}
\end{document}